\newtheorem{thm}{Theorem}[section]
\newtheorem{prop}[thm]{Proposition}
\newtheorem{cor}[thm]{Corollary}
\newtheorem{lem}[thm]{Lemma}
\theoremstyle{definition}
\newtheorem{defn}[thm]{Definition}
\newtheorem{rem}[thm]{Remark}
\newcommand{\be}{\begin{equation}}
\newcommand{\ee}{\end{equation}}
\newcommand{\R}{\mathbb{R}}
\newcommand{\N}{\mathbb{N}}
\newcommand{\E}{\mathbb{E}}
\def \cald {{  {\mathcal{D}}  }}
 \def  \eps { { \varepsilon } }
\numberwithin{equation}{section}
\begin{document}

\baselineskip=1.2\baselineskip

\pagestyle{plain}
\title{ Invariant measures, periodic measures and
pullback measure attractors
of   McKean-Vlasov stochastic reaction-diffusion equations
on unbounded domains.
\footnote{This work was supported
 by National Natural Science Foundation of China (12471154, 12071384, 12090010, 12090013, 12071317, 12471170).}}

\author{{Lin Shi$^\text{a}$, 
Jun Shen$^\text{b}\footnote{Corresponding author.}$, 
Kening Lu$^\text{b}$, 
Bixiang Wang$^\text{c}$
}\\
  { \small\textsl{$^\text{a}$ School of Mathematical Sciences, University of Electronic Science and Technology of China, }}\\
  { \small \textsl{Chengdu 611731,   P.R. China}}\\
  { \small\textsl{$^\text{b}$ School of Mathematics,  Sichuan University,  Chengdu 610064, P.R. China}}\\
  { \small\textsl{$^\text{c}$ Department of Mathematics, New Mexico Institute of Mining and Technology, Socorro, NM 87801, USA} }
}
\footnotetext{
\emph{E-mail addresses}: shilinlavender@163.com(L. Shi),
junshen85@163.com(J. Shen),
keninglu2014@126.com(K. Lu),
Bixiang.Wang@nmt.edu(B. Wang).
}

\date{}
\maketitle

{ \bf Abstract.}
This paper deals with the long term dynamics
of the  non-autonomous
McKean-Vlasov stochastic reaction-diffusion equations
 on $\R^n$.  We first prove
the existence and uniqueness of pullback measure attractors
of the non-autonomous dynamical system generated by
the solution operators defined in  the space of probability measures. We then prove the
existence and uniqueness of invariant measures and
periodic measures of the equation
under further conditions. We finally establish the
upper semi-continuity of pullback measure attractors
as well as the  convergence of
invariant measures and
periodic measures when the distribution dependent
stochastic equations converge to a distribution
independent  system.

{\bf Keywords.}   McKean-Vlasov  equation;   invariant measure; periodic measure; tail-ends estimate;
 pullback measure attractor.

 {\bf MSC 2010.} Primary  60F10; Secondary 60H15, 37L55, 35R60.

\section{Introduction}

 In this paper, we investigate the existence, uniqueness
 and limiting behavior of invariant measures, periodic
 measures and pullback measure attractors of the
   McKean-Vlasov stochastic
   reaction-diffusion  equation
 driven by nonlinear noise defined on $\mathbb{R}^n$:
$$
    d u(t,x)  -\Delta u(t,x) dt
    +\lambda u(t,x) dt
           + f(t, x, u (t,x), \mathcal{L}_{u (t)} ) dt
           $$
       \be \label{sde1}
             = g(t,x,u (t,x), \mathcal {L}_{u (t)} )dt
              +
                  \sum_{k=1}^{\infty}
                          \left( \theta_{k}(t,x) + \kappa(x) \sigma_{k}(t, u (t,x), \mathcal {L}_{u (t)})
                          \right) dW_k(t),
                          \quad t>\tau,
\ee
 with initial data
\begin{align}\label{sde2}
   u(\tau, x) = u_\tau (x),
   \quad x\in \R^n,
\end{align}
 where
  $\lambda>0$ is a constant,
 $\mathcal{L}_{u (t)}$
 is the distribution  of   $u (t)$,
 $f$ is a nonlinear  function with arbitrary growth rate,
 $g$ is a Lipschitz function,
 $\theta_k : \mathbb{R} \to L^2(\mathbb{R}^n)$
 is given,
 $\kappa \in H^1(\mathbb{R}^n) \bigcap
 W^{1, \infty} (\mathbb{R}^n)$,
 $\sigma_{k}$ is  a nonlinear diffusion term,
 and
 $\{W_k\}_{k\in \mathbb{N}}$ is a sequence of independent
 standard  real-valued Wiener processes
 on a complete filtered probability space $\left( \Omega, \mathcal{F}, \{\mathcal{F}_t\}_{t \geq 0}, \mathbb{P} \right)$.

   The  McKean-Vlasov stochastic
   differential  equations (MVSDEs)
   were first
     studied  in   \cite{McKean1966, Vlasov1968}
     which   often
     arise from the
     interacting particle systems
 \cite{BH2022,   DV1995, FG2015, S1991}.
 The striking feature of such differential equations
 lies in the fact that the equations
      depend  not only on the states of  the solutions,
 but also on the distributions of  the solutions.
 As a result,
 the Markov operators associated with
MVSDEs are no longer  semigroups
 (see, e.g., \cite{FWang2018}), and hence
 the  arguments to deal with the
 stochastic equations independent of distributions
 cannot be directly applied to  the MVSDEs.

 Currently, many publications are devoted to the study
 of the solutions of MVSDEs.
   For example,
   the existence of solutions of  the MVSDEs
   has been investigated in
   \cite{AD1995,  FHSY2022, GHL2022, HDS2021,  RZ2021,
   FWang2018},
    the Bismut formula in
   \cite{Banos2018, RW2019},
   the averaging principles
     in  \cite{LWX2022, RSX2021},
    and the large deviations in
     \cite{CW2024, HLL2021, LSZZ2023}
     and the references therein.
     In particular, the existence and ergodicity
     of invariant measures and periodic measures
     of the  {\it finite-dimensional}
     MVSDEs   have been
     explored in
     \cite{Bao2022,Hua2024,
     Hua2024b,  Lia2021, FWang2023, FWang2023b,
       Zha2023}
     and  in \cite{Ren2021}, respectively.
     However,
     As far as the authors are aware,
     the invariant measures and periodic measures
     of the {\it infinite-dimensional}
     MVSDEs have not been studied in the literature.
      In the present paper, we  will investigate the
      existence and  uniqueness
     of invariant measures and periodic measures
     of the  {\it infinite-dimensional}
     McKean-Vlasov stochastic
     reaction-diffusion equation
       as given by \eqref{sde1} defined
     on $\R^n$.

In the aforementioned references, the
      existence of invariant measures of the
     MVSDEs  was  obtained by the fixed
     point  theory.
     In this paper, we will apply  the theory
     of pullback measure attractors
     instead of the fixed point argument
     to prove the existence of invariant measures
     for the
McKean-Vlasov stochastic equation \eqref{sde1}.

 The concept of   measure attractor
 was first introduced in
 \cite{Schmalfuss91}
 for  the stochastic Navier-Stokes equations.
 Since then,   the
existence of measure attractors for stochastic
differential  equations has been further
investigated in  \cite{LW2024, MC1998,Morimoto1992,
Schmalfuss1997,
SWeng2024}.
Note that in all of these articles, the stochastic equations
do not depend on distributions of  solutions.
This paper is the first one to deal with measure attractors
of
the
McKean-Vlasov
stochastic  equations
which are dependent  on the laws of  solutions.

        To describe the main results of this paper,
        we denote by
        $\mathcal{P}(L^2(\R^n))$ the space of probability measures
        on $(L^2(\R^n), \mathcal{B}(L^2(\R^n)))$,
        where $\mathcal{B}(L^2(\R^n))$ is the
        Borel $\sigma$-algebra of $L^2(\R^n)$.
        The weak topology of
 $ \mathcal{P}(L^2(\R^n))$
 is metrizable, and the corresponding   metric
 is denoted by   $d_{\mathcal{P}(L^2(\R^n))}$.
 Set
$$
\mathcal P_4 \left( L^2(\R^n) \right)
= \left\{ {\mu  \in \mathcal
P\left(  L^2(\R^n) \right):
\int_{L^2(\R^n)}
 {\|\xi \|_{L^2(\R^n) }^4   d \mu \left(  \xi  \right) <    \infty } } \right\}.
$$
Then
$\left (
{\mathcal{P}}_4   ( L^2(\R^n)), \
d_{\mathcal{P}(L^2(\R^n))}
\right  )$ is a metric space.
 Given  $r>0$, denote by
$$
 B_ {
 \mathcal{P}_4   ( L^2(\R^n))
 }  (r)
 = \left\{ {\mu  \in {\mathcal
P}_4\left(  L^2(\R^n) \right):
\int_{L^2(\R^n)}
 {\|\xi \|_{L^2(\R^n) }^4   d \mu \left(  \xi  \right)
 \le r^4  } } \right\}.
$$

 Given $   \tau \le t  $
 and  $\mu\in {\mathcal{P}}_4   ( L^2(\R^n))$,
 let
 $P^*_{\tau, t} \mu$ be the law of
 the solution
  of \eqref{sde1}-\eqref{sde2}
with initial law $\mu$
  at initial time $\tau$.
  If $\phi:  L^2(\R^n) \to \R$ is a bounded Borel function,
  then we write
  $$
  P_{\tau, t} \phi (u_\tau)
  =\E \left (
  \phi (u(t, \tau, u_\tau))
  \right ),\quad \forall \ u_\tau \in  L^2(\R^n),
  $$
  where $u(t, \tau, u_\tau)$
  is the solution of \eqref{sde1}-\eqref{sde2}
  with initial value $u_\tau$
  at initial time $\tau$.
  Note that for the
  McKean-Vlasov stochastic
     equations like \eqref{sde1},
     $ P^*_{\tau, t} $ is not the dual of
     $P_{\tau, t} $
     (see \cite{FWang2018})  in the sense that
     \be\label{intr3}
     \int_{L^2(\R^n)}
     P_{\tau, t}\phi (\xi)\
     d\mu (\xi)
     \neq   \int_{L^2(\R^n)}
     \phi (\xi)  \
     d P^*_{\tau, t} (\xi),
    \ee
     where  $\mu\in {\mathcal{P}}_4   ( L^2(\R^n))$
  and  $\phi:  L^2(\R^n) \to \R$ is a bounded Borel function.

To prove the existence of   pullback measure attractors
for \eqref{sde1}-\eqref{sde2}, we  first  need to show
$ \{P^*_{\tau, t}\}_{\tau \le t} $
is a continuous non-autonomous dynamical system
on $\left (
{\mathcal{P}}_4   ( L^2(\R^n)), \
d_{\mathcal{P}(L^2(\R^n))}
\right  )$.
In general,
if  a    stochastic equation does not depend
on distributions of the solutions,
then the continuity
of
 $\{P^*_{\tau, t}\}_{\tau \le t} $
 follows from the  Feller property
 of  $\{P_{\tau, t}\}_{\tau \le t} $
 and the duality relation between
$P^*_{\tau, t}  $ and $P_{\tau, t}  $.
However, this method does not apply
to the
McKean-Vlasov stochastic
     reaction-diffusion equation  \eqref{sde1},
     because
$P^*_{\tau, t}  $ is no longer  the dual
of $P_{\tau, t}  $ as demonstrated by
\eqref{intr3}.
It seems that it is very  difficult to
prove the continuity
$\{P^*_{\tau, t}\}_{\tau \le t} $
for \eqref{sde1}
on  the space $\left (
{\mathcal{P}}_4   ( L^2(\R^n)), \
d_{\mathcal{P}(L^2(\R^n))}
\right  )$.

In order to circumvent this obstacle,
we take the advantage of the regularity
of ${\mathcal{P}}_4   ( L^2(\R^n))$
and apply the Vitali theorem to prove
the continuity of
 $\{P^*_{\tau, t}\}_{\tau \le t} $
on the   subspace
$\left (
B_ {
 \mathcal{P}_4   ( L^2(\R^n))
 }  (r)
, \
d_{\mathcal{P}(L^2(\R^n))}
\right  )$
  instead of the entire space
$\left (
{\mathcal{P}}_4   ( L^2(\R^n)), \
d_{\mathcal{P}(L^2(\R^n))}
\right  )$ (see Lemma \ref{ma1}).
It is fortunate that the
continuity of    $\{P^*_{\tau, t}\}_{\tau \le t} $
on the   subspace
$\left (
B_ {
 \mathcal{P}_4   ( L^2(\R^n))
 }  (r)
, \
d_{\mathcal{P}(L^2(\R^n))}
\right  )$
is sufficient for the existence of pullback
measure attractors for \eqref{sde1}-\eqref{sde2}
provided $\{P^*_{\tau, t}\}_{\tau \le t} $
has a closed pullback absorbing set and is pullback
asymptotically compact.

Since the stochastic equation \eqref{sde1}
is defined on the entire space
$\R^n$, and the Sobolev embeddings are not
compact on unbounded domains,
the pullback asymptotic compactness
of
$\{P^*_{\tau, t}\}_{\tau \le t} $
on
$\left (
{\mathcal{P}}_4   ( L^2(\R^n)), \
d_{\mathcal{P}(L^2(\R^n))}
\right  )$
does not follow from the uniform estimates
of solutions and the  Sobolev embeddings.
This is  another  challenge to establish
the existence of measure attractors for
\eqref{sde1}-\eqref{sde2} on $\R^n$.
We will use the argument of uniform
tail-ends estimates of solutions  to circumvent the
non-compactness of Sobolev embeddings on
$\R^n$
as in the deterministic case
\cite{Wang99} by showing the
mean square of the solutions of \eqref{sde1}-\eqref{sde2}
is
uniformly small outside a sufficiently large ball
 (see Lemma \ref{est4}).
By the uniform smallness of the tails of solutions
and the regularity of solutions, we then obtain
the pullback asymptotic compactness
of
$\{P^*_{\tau, t}\}_{\tau \le t} $
on
$\left (
{\mathcal{P}}_4   ( L^2(\R^n)), \
d_{\mathcal{P}(L^2(\R^n))}
\right  )$ (see Lemma \ref{ma3}),
and hence the existence and uniqueness of
pullback measure attractors of
$\{P^*_{\tau, t}\}_{\tau \le t} $.
Furthermore, if the functions $f$, $g$,
$\theta_k$  and $\sigma_k$
are all periodic functions in time
with period $T>0$, then   the
pullback measure attractor
is also periodic in time with period $T$
(see
 Theorem \ref{main_e}).

 Under further dissipative conditions,
 we prove that any two solutions of
 \eqref{sde1}-\eqref{sde2}
 pullback converge to each other, and hence
 the pullback measure attractor
 is a singleton in this case.
 By the invariance of the pullback
 measure attractor, we infer that
 the stochastic equation \eqref{sde1}
 has a unique periodic probability measure
 if
all the functions $f$, $g$,
$\theta_k$  and $\sigma_k$
are $T$-periodic in time.
Analogously, \eqref{sde1}
 has a unique invariant  probability measure
 if
all the functions $f$, $g$,
$\theta_k$  and $\sigma_k$
are time independent  (see Theorem
\ref{main_s}).

The last goal of this paper is to
investigate the limiting behavior  of
pullback measure attractors,
periodic measures and invariant measures
when
 the
McKean-Vlasov stochastic
      equation \eqref{sde1}
      converges to  a stochastic equation
      independent of distributions of solutions.
      More precisely,
      for every $\varepsilon \in (0,1)$,
      consider
      the
McKean-Vlasov stochastic
      equation in $\R^n$:
$$
    d u^\eps (t,x)  -\Delta u^\eps(t,x) dt
    +\lambda u^\eps(t,x) dt
             + f^\eps(t, x, u^\eps (t,x), \mathcal{L}_{u^\eps (t)} )
             $$
             \be  \label{sdep1}
             = g^\eps(t,x,u^\eps (t,x), \mathcal {L}_{u^\eps (t)} )dt
                +
                  \sum_{k=1}^{\infty}
                          \left( \theta_{k}(t,x) + \kappa(x)
                          \sigma^\eps_{k}(t, u^\eps (t,x), \mathcal {L}_{u^\eps (t)})
                          \right) dW_k(t),
                          \quad t>\tau.
\ee
Under certain conditions,
we   prove that, as $\varepsilon \to 0$,
 the solutions of
\eqref{sdep1}
converge to that of  the following equation:
 $$
    d u  (t,x)  -\Delta u (t,x) dt
    +\lambda u (t,x) dt
             + f (t, x, u  (t,x)  )
             $$
             \be  \label{sdel1}
             = g (t,x,u  (t,x)  )dt
                +
                  \sum_{k=1}^{\infty}
                          \left( \theta_{k}(t,x) + \kappa(x)
                          \sigma _{k}(t, u  (t,x) )
                          \right) dW_k(t),
                          \quad t>\tau.
\ee
Note that \eqref{sdel1}
does not depend on the laws of solutions.
Based on the convergence of solutions
of \eqref{sdep1}, we then
prove that
the pullback measure attractors
of \eqref{sdep1}
converge to that of \eqref{sdel1}
as $\varepsilon\to 0$
in terms of the Hausdorff semi-distance
(see Theorem \ref{main_up}).
In the special case where
the  pullback measure attractor
of \eqref{sdep1} is a singleton
for every $\varepsilon \in (0,1)$,
we infer that
the unique periodic  (invariant) probability measure
of \eqref{sdep1} converges to that
of \eqref{sdel1} when all the functions
in \eqref{sdep1}
and \eqref{sdel1}
are periodic  (independent)  in time
(see Corollary \ref{ip_con}).

This paper is organized as follows.
In section 2, we recall  a well-known result
on the existence and uniqueness
of pullback measure attractors.
 In section 3, we
 discuss the well-posedness
 of \eqref{sde1}-\eqref{sde2}.
 Section 4 is devoted to the uniform estimates
 of solutions including the uniform tail-ends estimates.
 In Section 5, we prove the existence and uniqueness
 of pullback measure attractors of \eqref{sde1}-\eqref{sde2}.
 In Section 6, we show the existence and uniqueness
 of periodic measures and invariant measures
 of \eqref{sde1}-\eqref{sde2}.
 In the last section, we
 prove the upper semi-continuity of pullback measure
 attractors of \eqref{sdep1}  as well as
  the convergence of
 periodic measures and invariant measures
 of \eqref{sdep1} as $\varepsilon \to 0$.

   For convenience,
   throughout this paper,
   we  denote the space
   $L^2(\R^n)$ by $H$ with
   norm $\| \cdot \|$ and inner product
   $(\cdot, \cdot)$.

\section{Preliminaries}

In this section,
for the reader's convenience,
we review basic results on
   pullback measure attractors
   for   dynamical systems defined in
   probability spaces

    Let $X$ be        a
    separable Banach space with norm  $\|\cdot\|_X$
    and
     Borel $\sigma$-algebra  $\mathcal B(X)$.
       Denote by
$C_b(X)$    the space of bounded continuous functions
on $X$.
Let  $L_b(X)$
be the space of bounded Lipschitz
continuous functions on $X$
with norm $\|\cdot\|_{L_b}$
$$
\|\xi \|_{L_b}
=\sup_{x\in X} |\xi (x)|
+    \mathop {\sup }\limits_{x_1 ,x_2  \in X,x_1\neq x_2} \frac{{\left| {\xi \left( {x_1 } \right)}  -
{\xi \left( {x_2 } \right)} \right|}}{{\|x_1-x_2\|_X}} .
$$

Let   $\mathcal P (X)$
be  the space  of probability measures on $(X,\mathcal B(X))$.
If $\mu, \mu_n \in
  \mathcal P(X)$ and for every
  $\xi \in   C_b(X)$,
  $\int_X \xi (x)  \mu_n (dx)
  \to \int_X \xi (x) \mu (dx), $
 then we say $\{\mu_n\}_{n=1}^\infty$
 converges to $\mu$ weakly.
 The weak topology of
 $\mathcal P(X)$
 is metrizable with metric given by
 $$
  d_{\mathcal{P} (X)}
  (\mu_1, \mu_2)
  =\sup_{\xi \in L_b(X), \ \|\xi \|_{L_b} \le 1}
  \left |\int_X \xi d\mu_1 -\int_X \xi d\mu_2 \right |,
 \ \forall \  \mu_1,\mu_2\in \mathcal P(X).
$$
Recall that
    $(\mathcal P(X),  d_{\mathcal{P} (X)})$
    is a polish space.
For every $p\ge 1$, let
$(\mathcal P_p(X), \mathbb{W} _p )$ be the
polish
space   given by
$$
\mathcal P_p \left( X \right) = \left\{ {\mu  \in \mathcal
P\left( X \right):
\int_X {\|x\|_X^p \mu \left( {dx} \right) <    \infty } } \right\},
$$
and
$$
\mathbb{W} _p ( \mu , \nu ) =
\inf\limits_{ \pi \in \Pi ( \mu, \nu ) }
\Big (
\int_{ X \times X}
    \| x-y  \|_X^p \pi (dx, dy)
\Big )^{ \frac{1}{p} },\quad
\forall \mu, \nu \in \mathcal{P}_p ( X ),
$$
where $ \Pi ( \mu, \nu ) $ is the  set of all
couplings of $\mu$ and $\nu$.
The metric $\mathbb{W} _p$
is called the
Wasserstein distance.

   Given  $r>0$, denote by
$$
 B_ {\mathcal P_p(X)} (r)  = \left\{ {\mu  \in \mathcal P_p \left( X \right)
 	:
 	\left(\int_X \|x\|_X^p \mu \left( {dx} \right)\right)^{\frac{1}{p}} \leq r } \right\}.
$$
 A  subset $S\subseteq {\mathcal P}_p \left( X \right)$ is
  bounded  if there is $r>0$ such that
  $S\subseteq B_
{{\mathcal P}_p \left( X \right)
}
(r)$. If   $S$ is bounded in   $ {\mathcal P}_p ( X )$,
then we set
$$
\|S\|_{{\mathcal P}_p ( X )}
=
\sup_{
\mu \in S
} \left(\int_X \|x\|_X^p \mu
 ( {dx}  )\right)^{\frac{1}{p}}.
$$

    Note that $(\mathcal P_p(X), \mathbb{W} _p )$
    is a polish space, but
    $(\mathcal {P}_p (X), d_{\mathcal{P}(X)})$
    is not complete. Since
    for every
    $r>0$,
    $B_ {\mathcal P_p(X)} (r)$ is a closed subset
    of  $\mathcal{P}(X)$ with respect to the
     metric  $d_{\mathcal{P}(X)}$,
     we know that
     the space
    $(B_ {\mathcal P_p(X)} (r), \ d_{\mathcal{P}(X)})$
    is complete for every $r>0$.

    We will consider a non-autonomous dynamical
    system defined in the metric
    space
    $(\mathcal {P}_p (X), d_{\mathcal{P}(X)})$.

\begin{defn}
A family  $\Phi=\{\Phi(t,\tau): t\in \R^+,\,\tau\in \R\}$ of mappings from
$(\mathcal {P}_p (X), d_{\mathcal{P}(X)})$
 to $(\mathcal {P}_p (X), d_{\mathcal{P}(X)})$
 is called  a  non-autonomous dynamical system  on
 $(\mathcal {P}_p (X), d_{\mathcal{P}(X)})$
 if  $\Phi$ satisfies: for all $\tau\in \R$ and $t, s\in \R^+$:

(i)  $\Phi(0,\tau)=I$,  where $I $ is the identity operator on
  $(\mathcal {P}_p (X), d_{\mathcal{P}(X)})$;

(ii) $\Phi(t+s,\tau)=\Phi(t,s+\tau)\circ \Phi(s,\tau)$.

If, in addition, there exists $T>0$ such that
$\Phi (t, \tau +T)=\Phi (t, \tau)$
for all $t\in \R^+$   and $\tau \in \R$,
then we say $\Phi$ is periodic with period $T$.
\end{defn}

If
$\Phi$ is
 a  non-autonomous dynamical system  on
 $(\mathcal {P}_p (X), d_{\mathcal{P}(X)})$,
 and  for every bounded subset $D$
of  $\mathcal {P}_p (X)$,
$\Phi$
is continuous from
$ (D,  d_{\mathcal{P}(X)})$
to
$(\mathcal {P}_p (X), d_{\mathcal{P}(X)})$,
then we say $\Phi$ is continuous on
bounded subsets of
 $\mathcal {P}_p (X)$.

      Since $(\mathcal {P}_p (X), d_{\mathcal{P}(X)})$
      is a metric space,  by
    Theorem 2.23 (more specifically Proposition
    3.6)  and Theorem 2.25
     in \cite{wang12}, we have the
  following
   criterion on the existence and uniqueness   of
   pullback measure attractors.

\begin{prop}\label{exatt}
	 Let $\mathcal D$ be an
	  inclusion-closed collection of
 families of  nonempty bounded subsets of
 $ \mathcal {P}_p (X),$
 and
  $\Phi$
 be  a  non-autonomous dynamical system
 on
 $(\mathcal {P}_p (X), d_{\mathcal{P}(X)})$.
 Suppose $\Phi$ is continuous on
bounded subsets of
 $\mathcal {P}_p (X)$.
  If  $\Phi$ has a
 closed  $\mathcal D$-pullback  absorbing
set $K\in \mathcal D$ and   is
$\mathcal D$-pullback asymptotically
 compact in
 $(\mathcal {P}_p (X), d_{\mathcal{P}(X)})$,
 then
$ \Phi$   has a unique  $\mathcal D$-pullback
  measure attractor $\mathcal A \in   \mathcal D$
  in $(\mathcal {P}_p (X), d_{\mathcal{P}(X)})$.

  Furthermore,
  If $\Phi$  and $K$  are periodic with periodic $T$,
  then so is the measure attractor $\mathcal{A}$.
 \end{prop}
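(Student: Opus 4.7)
The plan is to realize $\mathcal{A}$ as a pullback $\omega$-limit set of the closed absorbing family $K=\{K(\tau)\}_{\tau\in\R}$ and to verify, in order, nonemptiness and compactness, invariance, pullback attraction, and uniqueness. Concretely, for each $\tau\in\R$ I would set
\[
\mathcal{A}(\tau) \;=\; \bigcap_{s\ge 0}\;\overline{\bigcup_{t\ge s}\Phi(t,\tau-t)\,K(\tau-t)}^{\,d_{\mathcal{P}(X)}},
\]
and work inside the closed bounded subset $K(\tau)\subset \mathcal{P}_p(X)$, which the absorbing property forces to contain every tail of the above unions.

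First I would use $\mathcal{D}$-pullback asymptotic compactness: any sequence $\Phi(t_n,\tau-t_n)\mu_n$ with $t_n\to\infty$ and $\mu_n\in K(\tau-t_n)$ has a $d_{\mathcal{P}(X)}$-convergent subsequence. A diagonal argument then identifies $\mathcal{A}(\tau)$ with the set of all such subsequential limits; this simultaneously yields nonemptiness and compactness in $(\mathcal{P}_p(X),d_{\mathcal{P}(X)})$, since the limits, being cluster points of a sequence trapped in the closed absorbing set, stay inside $K(\tau)\subset\mathcal{P}_p(X)$. Pullback attraction follows from the absorbing hypothesis together with the same compactness argument: for $D\in\mathcal{D}$, the inclusion $\Phi(t,\tau-t)D(\tau-t)\subset K(\tau)$ holds for $t$ large, and a standard contradiction argument assuming $\mathrm{dist}_{d_{\mathcal{P}(X)}}(\Phi(t_n,\tau-t_n)D(\tau-t_n),\mathcal{A}(\tau))\ge \delta$ extracts a subsequential limit that must lie in $\mathcal{A}(\tau)$.

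For invariance $\Phi(t,\tau)\mathcal{A}(\tau)=\mathcal{A}(\tau+t)$, I would apply the cocycle identity $\Phi(t+s,\tau-s)=\Phi(t,\tau)\circ\Phi(s,\tau-s)$ and then pass to the limit along sequences of images of $K(\tau-s)$; this step needs precisely the hypothesis that $\Phi$ is continuous on bounded subsets of $\mathcal{P}_p(X)$, since the relevant sequences live in the bounded set $K(\tau)$. Uniqueness is routine: any other $\mathcal{D}$-pullback measure attractor $\widetilde{\mathcal{A}}\in\mathcal{D}$ would be pullback-attracted by $\mathcal{A}$ and would itself pullback-attract $\mathcal{A}$, so by compactness and invariance $\mathcal{A}(\tau)=\widetilde{\mathcal{A}}(\tau)$ for every $\tau$. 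For the periodic statement, $T$-periodicity of $\Phi$ and of $K$ gives $\Phi(t,\tau+T-t)=\Phi(t,\tau-t)$ and $K(\tau+T-t)=K(\tau-t)$, and direct substitution in the displayed formula for $\mathcal{A}(\tau)$ yields $\mathcal{A}(\tau+T)=\mathcal{A}(\tau)$.

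The principal technical obstacle is that $(\mathcal{P}_p(X),d_{\mathcal{P}(X)})$ fails to be complete: a $d_{\mathcal{P}(X)}$-limit of measures in $\mathcal{P}_p(X)$ could a priori lose its $p$-th moment and escape the space. This is precisely what the hypothesis $K\in\mathcal{D}$, closedness of $K(\tau)$ in $\mathcal{P}(X)$, and pullback asymptotic compactness are designed to rule out: all limits appearing in the construction are $d_{\mathcal{P}(X)}$-cluster points of sequences confined to the closed bounded set $K(\tau)$, hence remain in $\mathcal{P}_p(X)$. Once this point is handled, the argument is exactly the abstract scheme of \cite{wang12}, and I would conclude the proof by invoking Theorem~2.23 and Theorem~2.25 there.
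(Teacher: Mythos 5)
Your proposal is correct and follows essentially the same route as the paper: the paper simply invokes Theorems 2.23 and 2.25 (and Proposition 3.6) of \cite{wang12}, noting in a remark that continuity on bounded subsets suffices because the relevant limits are taken inside the closed bounded absorbing set $K(\tau)$ — exactly the point you isolate. Your sketch of the $\omega$-limit construction, the use of $K\in\mathcal{D}$ closed to keep limits in $\mathcal{P}_p(X)$ despite the incompleteness of $(\mathcal{P}_p(X),d_{\mathcal{P}(X)})$, and the final appeal to \cite{wang12} all match the paper's argument.
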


       \begin{rem}
           Note  that
            the dynamical system
            $\Phi$ in
       \cite{wang12} is required to be
       continuous.
       However, by examining the proof
       of Theorem 2.23
      and Theorem 2.25
      (especially Lemma 2.17 with $D$ replaced by
      a closed $\mathcal{D}$-pullback
      absorbing K)
     in \cite{wang12},    we find that
     the main results  of
     \cite{wang12}
     are still valid
       for $\Phi$ on
         $(\mathcal {P}_p (X), d_{\mathcal{P}(X)})$
         provided
   $\Phi$ is continuous on
bounded subsets of
 $\mathcal {P}_p (X)$
 and  $\mathcal D$ is
	  a collection of
 families of  nonempty bounded subsets of
 $ \mathcal {P}_p (X).$
 \end{rem}

\section{Existence and uniqueness
of solutions}

Throughout this paper,
we use $\delta_0$
for the Dirac probability measure at $0$,
and we assume
$f: {\mathbb{R}} \times {\mathbb{R}}^n
\times {\mathbb{R}}
\times \mathcal{P}_2(H)
\to {\mathbb{R}}$ is
  continuous
  and differentiable with respect to
  the second and third arguments,   which further
satisfies the   conditions:

  \noindent
 $({\bf H1})$. \
  For all
  $t, u, u_1, u_2  \in \mathbb{R}$,
          $x \in \mathbb{R}^n$
           and
          $\mu, \mu_1, \mu_2 \in \mathcal{P}_2(H)$,
  \be\label{f1}
  f(t,x, 0, \delta_0)=0,
  \ee
\begin{align}\label{f2}
   f(t,x,u,\mu) u \geq \alpha_1 |u|^p
   - \phi_1(t,x)(1 + |u|^2 )
   - \psi_1 (x)   \mu( \|\cdot\|^2),
\end{align}
\begin{align}\label{f3}
   | f(t,x,u_1, \mu_1) - f(t,x,u_2, \mu_2) |
   \leq & \alpha_2 \left( \phi_2(t,x) + |u_1|^{p-2} + |u_2|^{p-2} \right) |u_1 - u_2|  \nonumber \\
        & + \phi_3(t,x) \mathbb{W}_2 (\mu_1, \mu_2) ),
\end{align}
\begin{align}\label{f4}
   \frac{\partial f}{\partial u}
    (t,x,u,\mu)
   \geq - \phi_4 (t,x),
\end{align}
\begin{align}\label{f5}
  \left |  \frac{\partial f}{\partial x}
    (t,x,u,\mu)
    \right |
   \leq   \phi_5 (t,x)
   \left (1+ |u| +
   \sqrt{ \mu (\|\cdot \|^2} )
   \right ),
\end{align}
 where
 $p\ge 2$,  $\alpha_1>0$,
 $\alpha_2>0$, $\psi_1\in L^1(\R^n)
 \cap L^\infty (\R^n)$  and
       $\phi_i \in
        L ^\infty( \mathbb{R},
        L^\infty(\mathbb{R}^n)
                    \cap  L^1(\mathbb{R}^n) ) $
                   for
       $i = 1, 2,3,4,5$.

It follows from \eqref{f1} and \eqref{f3} that
for all
  $t, u   \in \mathbb{R}$,
          $x \in \mathbb{R}^n$ and
          $\mu  \in \mathcal{P}_2(H)$,
 \begin{align}\label{f6}
   | f(t,x,u, \mu) |
   \leq \alpha_3 |u|^{p-1} + \phi_6(t,x)
   \left( 1 + \sqrt{ \mu( \|\cdot\|^2) } \right),
\end{align}
for some
$\alpha_3>0$
 and
       $\phi_6\in
        L ^\infty( \mathbb{R},
        L^\infty(\mathbb{R}^n)
                   \cap  L^1(\mathbb{R}^n) )$.

For the nonlinear term $g$,
we assume
$ g: \mathbb{R} \times \mathbb{R}^n \times \mathbb{R} \times \mathcal {P}_2(H)
\rightarrow \mathbb{R}$
is continuous
and differentiable with respect to
  the second and third arguments
  such that:

  \noindent
 $({\bf  H2})$. \
For all $t, u, u_1, u_2 \in \mathbb{R},
  x \in \mathbb{R}^n$
and $\mu, \mu_1, \mu_2 \in \mathcal{P}_2(H)$,
 \be \label{g3}
     |g(t,x, u, \mu)|
     \le  \phi_g(
  t,x )
 +\phi_7(t,x)
  |u| + \psi_g (x) \sqrt{ \mu( \|\cdot\|^2) },
\ee
 \be\label{g2}
     |g(t,x, u_1, \mu_1) - g(t,x, u_2, \mu_2)|
     \leq \phi_7 (t,x)
          \left( | u_1 - u_2 | + \mathbb{W}_2(\mu_1, \mu_2)
          \right),
\ee
\be\label{g3a}
\left | {\frac {\partial g}{\partial x}}
(t,x, u, \mu) \right |
\le \phi_8 (t,x) +\phi_7(t,x)
\left ( |u| + \sqrt{\mu (\| \cdot \|^2)}
\right ),
\ee
  where
   $\phi_7  \in L^\infty (\mathbb{R}, L^\infty (\mathbb{R}^n)
                    \cap L^1 (\mathbb{R}^n) )$,
                     $\psi_g  \in   L^\infty (\mathbb{R}^n)
                    \cap L^1 (\mathbb{R}^n )$
                    and
   $\phi_g, \phi_8
  \in L^2_{loc}
  (\R, L^2(\R^n))$.

   By    \eqref{g2}
   we find that
   for all
    $t, u  \in \mathbb{R},
   x \in \mathbb{R}^n$
   and $\mu  \in \mathcal{P}_2(H)$,
     \be\label{g4}
\left | {\frac {\partial g}{\partial u}}
(t,x, u, \mu) \right |
\le  \phi_7(t,x).
\ee

For the
  diffusion terms
  $\{ \theta_{k} \}_{k=1}^\infty$
  and
  $\{ \sigma_{k} \}_{k=1}^\infty$
  we assume the following conditions.

  \noindent
 $({\bf H3})$. \
 The function
   $\theta = \{ \theta_{k} \}_{k=1}^\infty : \mathbb{R} \rightarrow L^2(\mathbb{R}^n, l^2)$ is continuous
   which further satisfies
   \be\label{theta1}
   \sum_{k=1}^\infty
    \int_t^{t+1}
   \|\nabla\theta_k  (s) \|^2ds
   <\infty,
   \quad \forall \  t\in \R.
   \ee

   \noindent
 $({\bf H4})$. \
   For every $k\in \mathbb{N}$,
   $\sigma_{k}:
    \mathbb{R}   \times \mathbb{R} \times \mathcal {P}_2(H)
   \rightarrow \mathbb{R}$
   is continuous
   such that for all  $t, u
    \in \mathbb{R}$ and $\mu \in \mathcal {P}_2(H)$,
   \begin{align}\label{s1}
       | \sigma_{k}(t,u,\mu ) | \leq \beta_{k} \left( 1 + \sqrt{ \mu( \|\cdot\|^2) } \right) + \gamma_{k} |u|,
   \end{align}
   where
   $\beta=\{\beta_{k}\}_{k=1}^\infty$ and $\gamma=\{\gamma_{k}\}_{k=1}^\infty$
   are    nonnegative sequences
   with
     $ \sum_{k=1}^\infty
                  \left( \beta_{k}^2 + \gamma_{k}^2
                  \right)
        < \infty$.
   Furthermore, we assume
     $\sigma_{k}(t,u,\mu)$ is
     differentiable in $u$ and  Lipschitz continuous in
     both
   $u$ and $\mu$
   uniformly for $t \in \mathbb{R}$
   in the sense that
   for all $t, u_1, u_2 \in \mathbb{R}$ and $\mu_1, \mu_2 \in \mathcal {P}_2(H)$,
  \begin{align} \label{s2}
      | \sigma_{k}(t, u_1, \mu_1) - \sigma_{k}(t, u_2, \mu_2) |
      \leq L_{\sigma,k} \left( | u_1 - u_2 | + \mathbb{W}_2 (\mu_1, \mu_2) \right),
  \end{align}
  where   $L_\sigma=
  \{L_{\sigma,k}\}_{k=1}^\infty
   $ is a sequence of nonnegative numbers
      such that  $ \sum_{k=1}^\infty
                   L_{\sigma,k}^2
         < \infty$.

It follows from \eqref{s2}
that
   for all $t, u  \in \mathbb{R}$ and $\mu
    \in \mathcal {P}_2(H)$,
  \begin{align} \label{s3}
     \left  |{\frac { \partial \sigma_{k}}
      {\partial u}}
      (t, u , \mu )
      \right   |
      \leq L_{\sigma,k} .
  \end{align}

Denote by
     $l^2$
      the  space of square summable sequences of real numbers.
For every
 $t \in \mathbb{R} $, $u \in H$ and $\mu \in \mathcal {P}_2(H)$,
 define  a map
 $\sigma(t,u,\mu): l^2 \rightarrow H$ by
\begin{align}\label{sidef}
  \sigma (t,u,\mu)(\eta)(x)
   = \sum_{k=1}^{\infty}
                       \left( \theta_k  (t,x)
                              + \kappa(x) \sigma_{ k} (t, u(x), \mu)
                       \right) \eta_k,\ \
                                 \forall\ \eta =
                                 \{ \eta_k\}_{k=1}^{\infty}\in l^2,
                                           x \in \mathbb{R}^n.
\end{align}
 Let  $L_2(l^2, H)$
be the space of Hilbert-Schmidt operators from
$l^2$ to $H$ with norm $\| \cdot \|_{L_2(l^2, H)}$.
Then
by \eqref{theta1} and \eqref{s1}
we infer that
the operator $\sigma(t,u,\mu)$
belongs to   $L_2(l^2, H)$
 with norm:
\begin{align} \label{s4}
        & \|\sigma(t,u,\mu)\|_{L_2(l^2, H)}^2
      =   \sum_{k=1}^\infty
      \int_{\R^n}
      | \theta_k  (t,x) + \kappa(x) \sigma_{k} (t, u(x), \mu ) |^2
      dx
           \nonumber \\
    \leq &
    2 \sum_{k=1}^\infty
    \| \theta_k  (t) \|^2
                 + 8 \| \kappa \|^2 \| \beta \|_{l^2}^2
                  \left( 1  + \mu( \|\cdot\|^2) \right)
                 + 4 \| \kappa \|_{L^\infty(\mathbb{R}^n)}^2 \| \gamma\|_{l^2}^2
   \| u \|^2  .
\end{align}
  Moreover, by \eqref{s2} we see that
     for all $t \in \mathbb{R}$, $u_1, u_2 \in H$ and $\mu_1, \mu_2 \in \mathcal {P}_2(H)$,
$$
    \| \sigma (t, u_1, \mu_1) -\sigma (t, u_2, \mu_2)\|^2 _{L_2(l^2, H)}
   =   \sum_{k=1}^\infty
          \int_{\R^n}
| \kappa(x)|^2
                      \left |  \sigma_{k} (t, u_1(x) , \mu_1)
                              - \sigma_{k} (t, u_2(x), \mu_2)
                      \right |^2 dx
      $$
      \be\label{s5}
     \le   2  \|
        L_{\sigma}  \|^2_{l^2}
                    \left (  \| \kappa \|_{L^\infty(\mathbb{R}^n)}^2
                    \| u_1 - u_2 \|^2
                           +
                           \| \kappa\|^2
                            \mathbb{W}^2_2 (\mu_1, \mu_2)
                      \right ).
    \ee

We now reformulate
 problem  \eqref{sde1}-\eqref{sde2}
as follows: for $t>\tau$,
\be \label{sde3}
       d u(t) -\Delta  u (t) dt
       +\lambda u(t) dt
         + f(t, \cdot,  u  (t),
          \mathcal{L}_{u (t)} ) dt
   =   g(t, \cdot, u (t),
   \mathcal{L}_{u  (t)} ) dt
       +
                  \sigma(t, u  (t),
                  \mathcal{L}_{u  (t)} ) dW(t),
\ee
with initial data
\be  \label{sde4}
    u (\tau) = u_\tau.
\ee

Under conditions
  ({\bf H1})-({\bf H4}),
 by the arguments
  of \cite{CW2024},
  one can verify that
  for every  $u_\tau \in L^2(\Omega, \mathcal{F}_\tau; H)$,
  system \eqref{sde3}-\eqref{sde4} has a unique solution
         $u $ defined
         on $[\tau, \infty)$
  in the sense that
  $u$ is
  a continuous $H$-valued $\mathcal{F}_t$-adapted
   process  such that
   for all $T>0$,
 $$ u
    \in L^2(\Omega, C([\tau, \tau + T], H) )
        \bigcap
        L^2(\Omega, L^2(\tau, \tau +T; H^1(\R^n) ))
        \bigcap
        L^p(\Omega, L^p(\tau,
        \tau +T; L^p(\mathbb{R}^n ) ) ),
 $$
  and for all $t\ge \tau$,  $\mathbb{P}$-almost surely,
   $$ u  (t)
      - \int_0^t  \Delta  u (s) ds
      +\lambda
      \int_0^t u (s) ds
       + \int_0^t f( s, \cdot, u  (s), \mathcal{L}_{u  (s)} )ds
        $$
        $$
  =   u_\tau
      + \int_0^t g(s, \cdot, u  (s), \mathcal{L}_{u  (s)}) ds
      +   \int_0^t \sigma (s, u (s), \mathcal{L}_{u (s)} ) dW(s),
$$
 in
 $\left(  H^1(\R^n)  \bigcap L^p(\mathbb{R}^n) \right)^\ast$.
 In addition, the following uniform estimates are valid:
\be\label{apri_est}
           \| u \|_{L^2(\Omega, C([\tau,
           \tau +T], H) )}^2
           + \| u  \|_{L^2(\Omega, L^2( \tau,
           \tau +T;  H^1(\R^n) ) )}^2
     \leq  C
              \left( 1 + \| u_\tau \|_{L^2(\Omega,  H)}^2
              \right),
\ee
 where $C=C(\tau, T) > 0$ is
 a constant  independent of $u_\tau$.

In the sequel, we also assume that
the coefficient $\lambda$ is sufficiently
large
such that
$$
    \lambda
    >  12 \|\kappa\|^2 \| \beta \|^2_{l^2}
         +
        6 \|\kappa\|^2_{L^\infty(\R^n)}
          \| \gamma \|^2_{l^2}
               +
            \| \phi_1  \|_{L^\infty
            (\R,
            L^\infty (\R^n)  )}
            +  \|\psi_1\|_{L^1(\R^n)}
            $$
     \be\label{lamc}
            +{\frac 12} \|\psi_g\|_{L^1(\R^n)\cap
            L^\infty(\R^n)}
            +
            \| \phi_7  \|_{L^\infty
            (\R,
            L^\infty  (\R^n) \cap L^1(\R^n)
            )} .
          \ee
     It follows from \eqref{lamc} that
         there exists
         a sufficiently small number $\eta\in (0,1)$
         such that
$$
    2 \lambda - 3\eta
    >   24 \|\kappa\|^2 \| \beta \|^2_{l^2}
         +
        12 \|\kappa\|^2_{L^\infty(\R^n)}
          \| \gamma \|^2_{l^2}
               +
           2  \| \phi_1  \|_{L^\infty
            (\R,
            L^\infty (\R^n)  )}
            + 2 \|\psi_1\|_{L^1(\R^n)}
            $$
     \be\label{etac}
            +\|\psi_g\|_{L^1(\R^n)\cap
            L^\infty(\R^n)}
            +2
            \| \phi_7  \|_{L^\infty
            (\R,
            L^\infty  (\R^n) \cap L^1(\R^n)
            )}
         .
          \ee

Let $ D_1 = \{  D(\tau):
\tau\in\mathbb{R}, \
              D(\tau) \ \text{is
              a bounded
              nonempty subset of  } \ \mathcal{P}_2 ( H )
               \}$,
               and
               $ D_2 = \{  D(\tau):
\tau\in\mathbb{R}, \
              D(\tau) \ \text{is
              a bounded
              nonempty subset of  } \ \mathcal{P}_4 ( H )
               \}$,
  Denote by $ \mathcal{D}_0 $ the collection of all
such  families  $D_1$   which further satisfy:
            $$     \lim\limits_{ \tau \rightarrow - \infty }
                     e^{ \eta  \tau }
                     \| D ( \tau )  \|_{ \mathcal{P}_2 (H  ) }^2
                     =0,
 $$
where $ \eta $
is the same number as in \eqref{etac}.
Similarly denote by
  $ \mathcal{D} $ the collection of all
such  families  $D_2$   which further satisfy:
            $$     \lim\limits_{ \tau \rightarrow - \infty }
                     e^{ 2\eta  \tau }
                     \| D ( \tau )  \|_{ \mathcal{P}_4 (H  ) }^4
                     =0.
 $$
It is evident
$ \mathcal{D} \subseteq   \mathcal{D}_0 $.

We also assume  the following condition
 in order to deriving uniform estimates of solutions:
\begin{equation}\label{phi_g}
\int_{-\infty}^\tau
     e^{  \eta  s}
     \left( \| \theta ( s) \|^2_{L^2(\R^n,
     l^2) }
           +  \|    \phi_g ( s)    \| ^2
     \right)  ds <  \infty, \ \ \forall \tau \in \mathbb{R},
\end{equation}
and
\begin{equation}\label{phi_ga}
\int_{-\infty}^\tau
     e^{  2\eta  s}
     \left( \| \theta ( s) \|^4_{L^2(\R^n,
     l^2) }
           +  \|    \phi_g ( s)    \| ^4
     \right)  ds <  \infty, \ \ \forall \tau \in \mathbb{R}.
\end{equation}

\section{Uniform Estimates of solutions}

This section is devoted to
the  uniform estimates of the solutions of problem
\eqref{sde3}-\eqref{sde4} which are needed to show the
existence and uniqueness of pullback measure attractors.

\begin{lem}\label{est1}
If  {\bf  (H1)}-{\bf (H4)},
\eqref{lamc} and \eqref{phi_g}
hold,
 then for every $ \tau \in \mathbb{R} $
and
$ D = \{  D ( t ) : t \in \mathbb{R}   \}     \in \mathcal{D}_0   $,
there exists
$ T = T ( \tau, D ) \ge 1 $
such that for all $ t \ge T $,
the solution   $u $   of   \eqref{sde3}-\eqref{sde4} satisfies,
for all $r\in [\tau -1, \tau]$,
$$
\E  \left (  \|  u  ( r, \tau - t, u_{\tau -t}  )  \|^2
    \right )
\le   M_1
     + M_1   \int_{ -\infty }^\tau
     e^{  \eta  ( s - \tau ) }
  \left (
     \| \phi_g (s)\|^2     +
       \| \theta (s) \|^2_{L^2(\R^n, l^2)}
       \right )  ds
 $$
and
$$
  \int_{ \tau - t }^\tau  e^{
  \eta  ( s- \tau  ) }
    \E \left (
     \|  u   ( s, \tau - t, u_{ \tau -t }  )
                \|_{ H^1 (\R^n ) } ^2
               + \|  u  ( s, \tau - t, u_{ \tau -t }  )
                 \|_{ L^p ( \R^n ) }^p
        \right ) ds
        $$
         \be\label{est1 2}
 \le
  M_1
     + M_1   \int_{ -\infty }^\tau
     e^{  \eta  ( s - \tau ) }
  \left (
     \| \phi_g (s)\|^2     +
       \| \theta (s) \|^2_{L^2(\R^n, l^2)}
       \right )  ds,
 \ee
where
$ u_{ \tau - t }  \in L^2  ( \Omega, \mathcal{F}_{ \tau - t },
H )  $
with
$ \mathcal{L}_{ u_{ \tau - t } } \in    D ( \tau - t )$,
  $\eta>0$ is the same number as in \eqref{etac},
  and
$ M_1>0 $  is a   constant independent of $ \tau $
 and $D$.
\end{lem}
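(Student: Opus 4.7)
The plan is to apply Itô's formula to the functional $s \mapsto e^{\eta s}\|u(s)\|^2$, where $u(s) = u(s, \tau - t, u_{\tau - t})$, and then pass to expectations. This is the step at which the McKean--Vlasov coupling simplifies dramatically: since $\mathcal{L}_{u(s)}(\|\cdot\|^2) = \E\|u(s)\|^2$, taking expectations yields a \emph{closed} scalar integral inequality for $y(s) := \E\|u(s)\|^2$, which is the whole point of the chosen Lyapunov functional.

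Each term in the Itô expansion is handled using the hypotheses stated earlier. The Laplacian contributes $-2\E\|\nabla u\|^2$ and the linear friction contributes $-2\lambda \E\|u\|^2$. For the reaction term, \eqref{f2} gives
$$-2\E(f,u) \le -2\alpha_1 \E\|u\|_{L^p}^p + 2\|\phi_1\|_{L^1} + 2\|\phi_1\|_{L^\infty(\R,L^\infty(\R^n))}\E\|u\|^2 + 2\|\psi_1\|_{L^1(\R^n)}\E\|u\|^2.$$
For the drift, a Cauchy--Schwarz and Young argument from \eqref{g3} bounds $2\E|(g,u)|$ by a multiple of $\|\phi_g(s)\|^2$ plus a term $(\|\phi_7\|_{L^\infty\cap L^1} + \|\psi_g\|_{L^1\cap L^\infty})(\E\|u\|^2)$ plus $\eta \E\|u\|^2$. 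The Itô correction $\E\|\sigma\|^2_{L_2(l^2,H)}$ is controlled directly by \eqref{s4} and yields $2\|\theta(s)\|^2_{L^2(\R^n,l^2)} + 8\|\kappa\|^2\|\beta\|^2_{l^2}(1+\E\|u\|^2) + 4\|\kappa\|^2_{L^\infty}\|\gamma\|^2_{l^2}\E\|u\|^2$. Collecting, the coefficient of $\E\|u\|^2$ reads
$$\eta - 2\lambda + 24\|\kappa\|^2\|\beta\|^2_{l^2} + 12\|\kappa\|^2_{L^\infty}\|\gamma\|^2_{l^2} + 2\|\phi_1\|_{L^\infty} + 2\|\psi_1\|_{L^1} + \|\psi_g\|_{L^1\cap L^\infty} + 2\|\phi_7\|_{L^\infty\cap L^1},$$
and by the choice of $\eta$ in \eqref{etac} this is $\le -2\eta$.

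This produces the differential inequality
$$\frac{d}{ds}\E\bigl[e^{\eta s}\|u(s)\|^2\bigr] + e^{\eta s}\bigl(\eta \E\|u\|^2 + 2\E\|\nabla u\|^2 + 2\alpha_1 \E\|u\|^p_{L^p}\bigr) \le C e^{\eta s}\bigl(1 + \|\phi_g(s)\|^2 + \|\theta(s)\|^2_{L^2(\R^n,l^2)}\bigr).$$
Integrating from $\tau - t$ to $r \in [\tau-1, \tau]$ and multiplying through by $e^{-\eta r}$, I recover both assertions: dropping the gradient and $L^p$ terms gives the pointwise bound on $\E\|u(r)\|^2$, while retaining them yields \eqref{est1 2} (after integrating in $r$). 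The initial datum contributes $e^{\eta(\tau - t - r)}\|u_{\tau - t}\|^2_{L^2(\Omega,H)} \le e^{\eta(\tau-t-r)}\|D(\tau-t)\|^2_{\mathcal{P}_2(H)}$, which tends to $0$ as $t\to\infty$ by the defining condition of $\mathcal{D}_0$; the forcing integral is bounded uniformly in $t$ by \eqref{phi_g}; together these produce the uniform constant $M_1$ independent of $\tau$ and $D$.

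The two main obstacles I anticipate are, first, legitimizing the removal of the stochastic-integral expectation: the martingale $M_s$ is only a local martingale a priori, so I need a stopping-time localization and then pass to the limit using the a priori bound \eqref{apri_est} and Fatou's lemma to get the deterministic inequality for $\E\|u\|^2$. Second, the constant bookkeeping is delicate because the $\E\|u\|^2$ originating from $\mathcal{L}_{u(s)}(\|\cdot\|^2)$ appears from three different sources (the $\psi_1$ term in $f$, the $\psi_g$ term in $g$, and the $\|\beta\|_{l^2}$ term in $\sigma$); one has to double-count these contributions exactly as in \eqref{etac} to ensure that the dissipative mechanism strictly dominates and the Grönwall estimate closes. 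Once both points are handled, the proof is a straightforward Itô--Grönwall argument.
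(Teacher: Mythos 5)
Your proposal matches the paper's proof essentially step for step: It\^o's formula applied to $e^{\eta s}\|u(s)\|^2$, the same term-by-term estimates from \eqref{f2}, \eqref{g3} and \eqref{s4}, absorption of the resulting coefficient of $\mathbb{E}\|u(s)\|^2$ via \eqref{etac}, stopping-time localization followed by Fatou's lemma to kill the local martingale, and finally the substitution $\tau\mapsto\tau-t$, $t\mapsto r$ combined with the decay condition defining $\mathcal{D}_0$ and the integrability condition \eqref{phi_g}. The only (harmless) slip is quoting $24\|\kappa\|^2\|\beta\|^2_{l^2}+12\|\kappa\|^2_{L^\infty(\R^n)}\|\gamma\|^2_{l^2}$ in your collected coefficient where your own bound on the It\^o correction contributes $8$ and $4$; since \eqref{etac} dominates the larger constants, the conclusion is unaffected.
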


\begin{proof}
 Given
  $u_\tau \in  L^2  ( \Omega, \mathcal{F}_{\tau},
H )$,
let  $u(t)= u(t, \tau, u_\tau)$ be the solution
of \eqref{sde3}-\eqref{sde4}
with initial data $u_\tau$
at initial time $ \tau$.

By \eqref{sde3} and It\^{o}'s formula, we get for
all  $ t\ge \tau $,
 $$
     e^{\eta t} \| u  (t) \|^2
      + 2 \int_{ \tau} ^t
                 e^{\eta s}  \| \nabla    u (s)
                 \|^2 ds
                 + (2 \lambda-\eta)
                  \int_ { \tau}^t e^{\eta s}
                  \|  u  ( s )  \|^2 ds
                  $$
                  $$
      + 2 \int_ { \tau} ^t\int_{\mathbb{R} ^n}
      e^{\eta s}
        f( s, x, u  (s,x), \mathcal{L}_{u  (s)} ) u  (s,x) dx ds
      $$
      $$
  =   e^{\eta \tau}\| u_ { \tau} \|^2
      + 2 \int_ { \tau}^t
      e^{\eta s}
      (g(s, u  (s), \mathcal{L}_{u (s)} ),  u  (s) ) ds
      +   \int_ { \tau}^t
      e^{\eta s}
      \| \sigma(s, u  (s), \mathcal{L}_{u  (s)} ) \|_{L_2(l^2,H)}^2 ds
      $$
        \be \label{est1 p1}
     + 2   \int_ { \tau}^t e^{\eta s}
          \left( u  (s),
         \sigma(s, u  (s), \mathcal{L}_{u  (s)}) dW(s)
                                      \right),
 \ee
  $    \mathbb{P}$-almost surely.
For every  $m\in \N$, define a stopping time
$\tau_m$ by
$$
\tau_m
=\inf
\{
t\ge \tau:  \| u(t) \| > m
\}.
$$
As usual, $\inf \emptyset =+\infty$.
By \eqref{est1 p1} we have
for all  $ t\ge \tau $,
$$
    \mathbb{E}
    \left (  e^{\eta (t\wedge \tau_m) } \| u  (t\wedge \tau_m ) \|^2
      + 2 \int_{ \tau} ^{t\wedge \tau_m}
                 e^{\eta s}  \| \nabla    u (s)
                 \|^2 ds
     \right )
     $$
     $$=
         \mathbb{E} \left (e^{\eta \tau} \| u_ { \tau} \|^2
         \right )
                 + (\eta - 2 \lambda )
                     \mathbb{E}
                     \left (
                  \int_ { \tau}^ {t\wedge \tau_m} e^{\eta s}
                  \|  u  ( s )  \|^2 ds
                  \right )
                  $$
                  $$
      - 2
       \mathbb{E}
                     \left (
      \int_ { \tau} ^ {t\wedge \tau_m}
      \int_{\mathbb{R} ^n}
      e^{\eta s}
        f( s, x, u  (s,x), \mathcal{L}_{u  (s)} ) u  (s,x) dx ds
        \right )
      $$
      $$
      + 2  \mathbb{E}
                     \left (
                     \int_ { \tau}^ {t\wedge \tau_m}
      e^{\eta s}
      (g(s, u  (s), \mathcal{L}_{u (s)} ),  u  (s) ) ds
      \right )
      $$
              \be \label{est1 p2}
      +   \mathbb{E}
                     \left (
                      \int_ { \tau}^ {t\wedge \tau_m}
      e^{\eta s}
      \| \sigma(s, u  (s), \mathcal{L}_{u  (s)} ) \|_{L_2(l^2,H)}^2 ds
      \right ).
  \ee

Next, we derive
the  uniform estimates
for the terms
on the right-hand side
of \eqref{est1 p2}.
 For the
 third  term on the right-hand side of
\eqref{est1 p2} , by \eqref{f2}, we have
$$
- 2
       \mathbb{E}
                     \left (
      \int_ { \tau} ^ {t\wedge \tau_m}
      \int_{\mathbb{R} ^n}
      e^{\eta s}
        f( s, x, u  (s,x), \mathcal{L}_{u  (s)} ) u  (s,x) dx ds
        \right )
        $$
$$
\le
-2\alpha_1
 \mathbb{E}
      \left (
      \int_ { \tau} ^ {t\wedge \tau_m}
      e^{\eta s}
      \| u(s) \|^p_{L^p(\R^n)} ds
      \right )
      +
      2\mathbb{E}
      \left (
      \int_ { \tau} ^ {t\wedge \tau_m}
      e^{\eta s} \| \phi_1 (s)\|_{L^\infty
      (\R^n)}
      \| u(s) \|^2  ds
      \right )
      $$
      $$
      +2
      \mathbb{E}
      \left (
      \int_ { \tau} ^ {t\wedge \tau_m}
      e^{\eta s} \left (
      \| \phi_1 (s)\|_{L^1
      (\R^n)}
       +
        \| \psi_1 \|_{L^1
      (\R^n)}
       \mathbb{E}
      \left (
      \| u(s) \|^2
      \right )
      \right )  ds
      \right )
      $$
      $$
\le
-2\alpha_1
 \mathbb{E}
      \left (
      \int_ { \tau} ^ {t\wedge \tau_m}
      e^{\eta s}
      \| u(s) \|^p_{L^p(\R^n)} ds
      \right )
      +
      2\mathbb{E}
      \left (
      \int_ { \tau} ^ {t }
      e^{\eta s} \| \phi_1 (s)\|_{L^\infty
      (\R^n)}
      \| u(s) \|^2  ds
      \right )
      $$
      $$
      +2
      \mathbb{E}
      \left (
      \int_ { \tau} ^ {t }
      e^{\eta s} \left (
      \| \phi_1 (s)\|_{L^1
      (\R^n)}
       +
        \| \psi_1 \|_{L^1
      (\R^n)}
       \mathbb{E}
      \left (
      \| u(s) \|^2
      \right )
      \right )  ds
      \right )
      $$
        $$
\le
-2\alpha_1
 \mathbb{E}
      \left (
      \int_ { \tau} ^ {t\wedge \tau_m}
      e^{\eta s}
      \| u(s) \|^p_{L^p(\R^n)} ds
      \right )
      +2
       \int_ { \tau} ^ {t }
      e^{\eta s}
      \| \phi_1 (s)\|_{L^1
      (\R^n)}     ds
      $$
    \be\label{est1 p3}
      +
      2
      \int_ { \tau} ^ {t }
      e^{\eta s}
      \left ( \| \phi_1 (s)\|_{L^\infty
      (\R^n)}
      +\| \psi_1  \|_{L^1
      (\R^n)}
      \right )
      \mathbb{E}
      \left (
      \| u(s) \|^2
      \right )   ds .
   \ee
   For the fourth term on the
   right-hand side of
   \eqref{est1 p2},
   by \eqref{g3} we have
   $$
       2  \mathbb{E}
                     \left (
                     \int_ { \tau}^ {t\wedge \tau_m}
      e^{\eta s}
      (g(s, u  (s), \mathcal{L}_{u (s)} ),  u  (s) ) ds
      \right )
      $$
      $$
      \le
      2  \mathbb{E}
                     \left (
                     \int_ { \tau}^ {t\wedge \tau_m}
      e^{\eta s}
      \int_{\R^n}
      \phi_g (s, x)
       |u(s,x)| dxds
       \right )
       $$
       $$
       +
       2  \mathbb{E}
                     \left (
                     \int_ { \tau}^ {t\wedge \tau_m}
      e^{\eta s}
      \int_{\R^n}
      \left (
      \phi_7 (s, x)
      |u(s,x)|^2
      +
       \psi_g   ( x)
      |u(s,x)| \sqrt{
      \mathbb{E}
                     \left (
                     \| u(s)\|^2
                     \right )
                     }
                     \right )
          dx ds
     \right )
      $$
       $$
      \le
        \mathbb{E}
                     \left (
                     \int_ { \tau}^ {t\wedge \tau_m}
      e^{\eta s}
      \left ( \eta \| u(s)\|^2
      + \eta^{-1} \|\phi_g (s) \|^2
      \right ) ds
      \right )
       $$
 $$
     +
         \mathbb{E}
                     \left (
                     \int_ { \tau}^ {t\wedge \tau_m}
      e^{\eta s}
      \int_{\R^n}
     \left (
      (2| \phi_7 (s,x)|+ |\psi_g(x)| ) |u(s,x)|^2
      +   |\psi_g (x)|
      \mathbb{E}
                     \left (
                     \| u(s)\|^2
                     \right )
                      \right )
          dx ds
     \right )
      $$
       $$
      \le
        \mathbb{E}
                     \left (
                     \int_ { \tau}^ {t\wedge \tau_m}
      e^{\eta s}
      \left ( \eta \| u(s)\|^2
      + \eta^{-1} \|\phi_g (s) \|^2
      \right ) ds
      \right )
       $$
       $$
      +
        \mathbb{E}
                     \left (
                     \int_ { \tau}^ {t\wedge \tau_m}
      e^{\eta s}
     \| \psi_g \|_{L^1(\R^n)}
      \mathbb{E}
                     \left (
                     \| u(s)\|^2
                     \right )
            ds
     \right )
     $$
     $$
      +
        \mathbb{E}
                     \left (
                     \int_ { \tau}^ {t\wedge \tau_m}
      e^{\eta s}
     (2\| \phi_7 (s )\|_{L^\infty (\R^n)}
     +\|\psi_g\|_{L^\infty (\R^n)})
           \| u(s)\|^2
            ds
     \right )
      $$
       $$
      \le
        \mathbb{E}
                     \left (
                     \int_ { \tau}^ {t }
      e^{\eta s}
      \left ( \eta \| u(s)\|^2
      + \eta^{-1} \|\phi_g (s) \|^2
      \right ) ds
      \right )
       $$
       $$
      +
        \mathbb{E}
                     \left (
                     \int_ { \tau}^ {t }
      e^{\eta s}
     \| \psi_g \|_{L^1(\R^n)}
      \mathbb{E}
                     \left (
                     \| u(s)\|^2
                     \right )
            ds
     \right )
     $$
     $$
      +
        \mathbb{E}
                     \left (
                     \int_ { \tau}^ {t }
      e^{\eta s}
     (2\| \phi_7 (s )\|_{L^\infty (\R^n)}
     +\|\psi_g\|_{L^\infty (\R^n)})
           \| u(s)\|^2
            ds
     \right )
      $$
         \be\label{est1 p4}
      \le
               \eta^{-1}      \int_ { \tau}^ {t }
      e^{\eta s}
     \| \phi_g (s)\|^2  ds
     +
               \int_ { \tau}^ {t }
      e^{\eta s}
      \left (\eta +  \| \psi_g \|_{L^1 (\R^n)}
      + 2
     \| \phi_7 (s)\|_{L^\infty (\R^n)}
     +\|\psi_g \|_{L^\infty (\R^n)}
          \right )
        \mathbb{E}
                     \left ( \| u(s)\|^2
                     \right )
            ds .
            \ee
     For the last term on the right-hand side
     of \eqref{est1 p2},
     by \eqref{s4} we have
      $$
         \mathbb{E}
                     \left (
                      \int_ { \tau}^ {t\wedge \tau_m}
      e^{\eta s}
      \| \sigma(s, u  (s), \mathcal{L}_{u  (s)} ) \|_{L_2(l^2,H)}^2 ds
      \right )
      $$
         $$
         \le
          2
          \mathbb{E}
                     \left (
                      \int_ { \tau}^ {t\wedge \tau_m}
      e^{\eta s}
       \| \theta (s) \|^2_{L^2(\R^n, l^2)} ds
       \right )
      + 8 \|\kappa\|^2 \| \beta \|^2_{l^2}
        \mathbb{E}
                     \left (
                      \int_ { \tau}^ {t\wedge \tau_m}
      e^{\eta s}
      \left  (1+  \mathbb{E}
                     \left ( \|u(s) \|^2
                     \right )  \right ) ds \right )
                     $$
      $$
         + 4 \|\kappa\|^2_{L^\infty(\R^n)}
          \| \gamma \|^2_{l^2}
           \mathbb{E}
                     \left (
                      \int_ { \tau}^ {t\wedge \tau_m}
      e^{\eta s}
          \| u(s) \|^2 ds
          \right  )
           $$
        $$
         \le
          2
                      \int_ { \tau}^ {t }
      e^{\eta s}
       \| \theta (s) \|^2_{L^2(\R^n, l^2)} ds
      + 8 \|\kappa\|^2 \| \beta \|^2_{l^2}
                \int_ { \tau}^ {t }
      e^{\eta s}
      \left (1+  \mathbb{E}
                     \left ( \|u(s) \|^2
                     \right )
                     \right ) ds
                     $$
      $$
         + 4 \|\kappa\|^2_{L^\infty(\R^n)}
          \| \gamma \|^2_{l^2}
                      \int_ { \tau}^ {t }
      e^{\eta s}             \mathbb{E}
                     \left (
          \| u(s) \|^2
          \right ) ds
           $$
  $$
         \le
          2
                      \int_ { \tau}^ {t }
      e^{\eta s}
       \| \theta (s) \|^2_{L^2(\R^n, l^2)} ds
      + 8 \|\kappa\|^2 \| \beta \|^2_{l^2}
      \eta^{-1} e^{\eta t}
      $$
    \be\label{est1 p5}
         +
         \left( 8 \|\kappa\|^2 \| \beta \|^2_{l^2}
         +
         4 \|\kappa\|^2_{L^\infty(\R^n)}
          \| \gamma \|^2_{l^2}
          \right )
                      \int_ { \tau}^ {t }
      e^{\eta s}             \mathbb{E}
                     \left (
          \| u(s) \|^2
          \right ) ds .
        \ee
 It follows from
 \eqref{est1 p2}-\eqref{est1 p5}
 that for all $t\ge \tau$,
 $$
    \mathbb{E}
    \left (  e^{\eta (t\wedge \tau_m) } \| u  (t\wedge \tau_m ) \|^2
      + 2 \int_{ \tau} ^{t\wedge \tau_m}
                 e^{\eta s}  \| \nabla    u (s)
                 \|^2 ds
     \right )
     $$
     $$
     +      \mathbb{E}
       \left (
        (  2 \lambda -\eta )
                  \int_ { \tau}^ {t\wedge \tau_m} e^{\eta s}
                  \|  u  ( s )  \|^2 ds
                  +
     2\alpha_1
      \int_ { \tau} ^ {t\wedge \tau_m}
      e^{\eta s}
      \| u(s) \|^p_{L^p(\R^n)} ds
      \right )
     $$
     $$\le
         \mathbb{E} \left (e^{\eta \tau} \| u_ { \tau} \|^2
         \right )
  +2
       \int_ { \tau} ^ {t }
      e^{\eta s}
      \| \phi_1 (s)\|_{L^1
      (\R^n)}     ds
      $$
      $$
      +
        \eta^{-1}      \int_ { \tau}^ {t }
      e^{\eta s}
     \| \phi_g (s)\|^2  ds
             +
          2
                      \int_ { \tau}^ {t }
      e^{\eta s}
       \| \theta (s) \|^2_{L^2(\R^n, l^2)} ds
      + 8 \|\kappa\|^2 \| \beta \|^2_{l^2}
      \eta^{-1} e^{\eta t}
      $$
  $$
      +
      2
      \int_ { \tau} ^ {t }
      e^{\eta s}
      \left ( \| \phi_1 (s)\|_{L^\infty
      (\R^n)}
      +\| \psi_1 \|_{L^1
      (\R^n)}
      \right )
      \mathbb{E}
      \left (
      \| u(s) \|^2
      \right )   ds
 $$
  $$
     +
               \int_ { \tau}^ {t }
      e^{\eta s}
      \left (  \eta + \| \psi_g \|_{L^1 (\R^n)}
      + 2
     \| \phi_7 (s )\|_{L^\infty (\R^n)}
     +\| \psi_g   \|_{L^\infty (\R^n)}
          \right )
        \mathbb{E}
                     \left ( \| u(s)\|^2
                     \right )
            ds
            $$
     \be\label{est1 p6}
         +
         \left( 8 \|\kappa\|^2 \| \beta \|^2_{l^2}
         +
         4 \|\kappa\|^2_{L^\infty(\R^n)}
          \| \gamma \|^2_{l^2}
          \right )
                      \int_ { \tau}^ {t }
      e^{\eta s}             \mathbb{E}
                     \left (
          \| u(s) \|^2
          \right ) ds .
        \ee
      Taking the limit of \eqref{est1 p6}
      as $m \to \infty$, by Fatou's lemma we obtain
      for all $t\ge \tau$,
      $$e^{\eta t}
    \mathbb{E}
    \left (     \| u  (t  ) \|^2  \right )
      + 2 \int_{ \tau} ^{t }
                 e^{\eta s}  \mathbb{E}
    \left ( \| \nabla    u (s)
                 \|^2 ds
     \right )
     $$
     $$
     +
      \eta
                  \int_ { \tau}^ {t } e^{\eta s}
                  \mathbb{E}
       \left ( \|  u  ( s )  \|^2
       \right ) ds
                  +
     2\alpha_1
      \int_ { \tau} ^ {t }
      e^{\eta s} \mathbb{E}
       \left (
      \| u(s) \|^p_{L^p(\R^n)}
      \right ) ds
     $$
     $$\le
         \mathbb{E} \left (e^{\eta \tau} \| u_ { \tau} \|^2
         \right )
  +2
       \int_ { \tau} ^ {t }
      e^{\eta s}
      \| \phi_1 (s)\|_{L^1
      (\R^n)}     ds
      $$
      $$
      +
        \eta^{-1}      \int_ { \tau}^ {t }
      e^{\eta s}
     \| \phi_g (s)\|^2  ds
             +
          2
                      \int_ { \tau}^ {t }
      e^{\eta s}
       \| \theta (s) \|^2_{L^2(\R^n, l^2)} ds
      + 8 \|\kappa\|^2 \| \beta \|^2_{l^2}
      \eta^{-1} e^{\eta t}
      $$
  $$
      +
      2
      \int_ { \tau} ^ {t }
      e^{\eta s}
      \left ( \| \phi_1 (s)\|_{L^\infty
      (\R^n)}
      +\| \psi_1 \|_{L^1
      (\R^n)}
      \right )
      \mathbb{E}
      \left (
      \| u(s) \|^2
      \right )   ds
 $$
  $$
     +
               \int_ { \tau}^ {t }
      e^{\eta s}
      \left (  \eta + \| \psi_g \|_{L^1 (\R^n)}
      + 2
     \| \phi_7 (s )\|_{L^\infty (\R^n)}
     +\| \psi_g   \|_{L^\infty (\R^n)}
          \right )
        \mathbb{E}
                     \left ( \| u(s)\|^2
                     \right )
            ds
            $$
     \be\label{est1 p6a}
         +
         \left( 2\eta  -2\lambda +
         8 \|\kappa\|^2 \| \beta \|^2_{l^2}
         +
         4 \|\kappa\|^2_{L^\infty(\R^n)}
          \| \gamma \|^2_{l^2}
          \right )
                      \int_ { \tau}^ {t }
      e^{\eta s}             \mathbb{E}
                     \left (
          \| u(s) \|^2
          \right ) ds .
        \ee

 By \eqref{etac}
         and \eqref{est1 p6a}
         we get
            for all $t\ge \tau$,
            $$
        \mathbb{E}
       \left ( \| u  (t ,\tau, u_\tau  ) \|^2
       \right )
      + 2 \int_{ \tau} ^{t }
                 e^{\eta (s-t) } \mathbb{E}
       \left (
        \| \nabla    u (s)
                 \|^2
     \right )ds
     $$
     $$
     +\eta
     \int_{ \tau} ^{t }
                 e^{\eta (s-t) } \mathbb{E}
       \left (
        \|     u (s)
                 \|^2
     \right )ds
              +
     2\alpha_1
      \int_ { \tau} ^ {t }
      e^{\eta  (s-t) } \mathbb{E}
       \left (
      \| u(s) \|^p_{L^p(\R^n)}
      \right ) ds
     $$
     $$\le
     e^{\eta (\tau-t) }
         \mathbb{E} \left ( \| u_ { \tau} \|^2
         \right )
  +2
       \int_ { \tau} ^ {t }
      e^{\eta  (s-t)  }
      \| \phi_1 (s)\|_{L^1
      (\R^n)}     ds
      $$
      \be\label{est1 p7}
+  \eta^{-1}      \int_ { \tau}^ {t }
      e^{\eta (s-t) }
     \| \phi_g (s)\|^2  ds
             +
          2
                      \int_ { \tau}^ {t }
      e^{\eta  (s-t) }
       \| \theta (s) \|^2_{L^2(\R^n, l^2)} ds
      + 8 \|\kappa\|^2 \| \beta \|^2_{l^2}
      \eta^{-1}  .
    \ee

   Replacing $\tau$ and $t$
   in \eqref{est1 p7} by $\tau -t$
   and  $r$, respectively ,
         we get
            for all $t\ge 1$
            and $r\in [\tau -1,
            \tau]$,
            $$
        \mathbb{E}
       \left ( \| u  (r ,\tau -t , u_{\tau -t} ) \|^2
       \right )
      + 2 \int_{  \tau -t } ^{r }
                 e^{\eta (s-r) } \mathbb{E}
       \left (
        \| \nabla    u (s)
                 \|^2
     \right )ds
     $$
     $$
     +\eta \int_{  \tau -t } ^{r }
                 e^{\eta (s-r) } \mathbb{E}
       \left (
        \|   u (s)
                 \|^2
     \right )ds
              +
     2\alpha_1
      \int_ { \tau -t} ^ {r }
      e^{\eta  (s-r) } \mathbb{E}
       \left (
      \| u(s) \|^p_{L^p(\R^n)}
      \right ) ds
     $$
     $$\le
     e^{\eta (\tau-t-r) }
         \mathbb{E} \left ( \| u_ { \tau -t} \|^2
         \right )
  +2
       \int_ { \tau -t } ^ {r}
      e^{\eta  (s-r)  }
      \| \phi_1 (s)\|_{L^1
      (\R^n)}     ds
      $$
     $$
      +
        \eta^{-1}      \int_ { \tau -t }^ {r }
      e^{\eta (s-r) }
     \| \phi_g (s)\|^2  ds
             +
          2
                      \int_ { \tau -t }^ {r }
      e^{\eta  (s-r) }
       \| \theta (s) \|^2_{L^2(\R^n, l^2)} ds
      + 8 \|\kappa\|^2 \| \beta \|^2_{l^2}
      \eta^{-1}
    $$
    $$\le
     e^{\eta (\tau-t-r) }
         \mathbb{E} \left ( \| u_ { \tau -t} \|^2
         \right )
  +2\eta^{-1}
      \| \phi_1 \|_{L^\infty(\R, L^1
      (\R^n) ) }
     $$
   \be\label{est1 p8}
      +
        \eta^{-1}      \int_ { -\infty}^ {\tau}
      e^{\eta (s-\tau +1) }
     \| \phi_g (s)\|^2  ds
             +
          2
                      \int_ { -\infty }^ {\tau}
      e^{\eta  (s-\tau +1) }
       \| \theta (s) \|^2_{L^2(\R^n, l^2)} ds
      + 8 \|\kappa\|^2 \| \beta \|^2_{l^2}
      \eta^{-1}  .
      \ee

Note that $ \mathcal{L}_{ u_{\tau-t}  }  \in   D ( \tau-t )$
and $D\in \mathcal{D}_0$,
and hence for  all $r\in [\tau -1, \tau]$,
 $$
\lim_{t \to \infty}
 e^{\eta (\tau-t-r) }
         \mathbb{E} \left ( \| u_ { \tau -t} \|^2
         \right )
           \le  e^{\eta (1-\tau)}
          \lim_{t \to \infty}
 e^{\eta (\tau-t) }\| D(\tau -t)\|^2_{\mathcal{P}_2 ( H ) } =0.
         $$
         Then
  there exists $T=T(\tau, D ) \ge 1$ such that for all $t\ge T$
  and $r\in [\tau -1,
  \tau]$,
$$
e^{\eta (\tau-t-r) }
         \mathbb{E} \left ( \| u_ { \tau -t} \|^2
         \right )   \le 1,
         $$
         which along with
         \eqref{est1 p8}
         implies that
         for all $t\ge T$
  and $r\in [\tau -1,
  \tau]$,
  $$
        \mathbb{E}
       \left ( \| u  (r ,\tau -t , u_{\tau -t} ) \|^2
       \right )
      + 2  \int_{  \tau -t } ^{r }
                 e^{\eta (s-r) } \mathbb{E}
       \left (
        \| \nabla    u (s)
                 \|^2
     \right )ds
     $$
     $$
     +\eta \int_{  \tau -t } ^{r }
                 e^{\eta (s-r) } \mathbb{E}
       \left (
        \|   u (s)
                 \|^2
     \right )ds
              +
     2\alpha_1
      \int_ { \tau -t} ^ {r }
      e^{\eta  (s-r) } \mathbb{E}
       \left (
      \| u(s) \|^p_{L^p(\R^n)}
      \right ) ds
     $$
     $$\le 1
  +2\eta^{-1}
      \| \phi_1 \|_{L^\infty(\R, L^1
      (\R^n) ) }
       +
        \eta^{-1}e^\eta      \int_ { -\infty}^ {\tau}
      e^{\eta (s-\tau) }
     \| \phi_g (s)\|^2  ds
     $$
     $$
             +
          2  e^\eta
                      \int_ { -\infty }^ {\tau}
      e^{\eta  (s-\tau ) }
       \| \theta (s) \|^2_{L^2(\R^n, l^2)} ds
      + 8 \|\kappa\|^2 \| \beta \|^2_{l^2}
      \eta^{-1} .
$$
  This completes the proof.
    \end{proof}

As an immediate consequence of Lemma
\ref{est1}, we have
the following estimates.

\begin{cor}\label{est2}
If  {\bf  (H1)}-{\bf (H4)}, \eqref{lamc} and \eqref{phi_g}  hold,
 then for every $ \tau \in \mathbb{R} $
and
$ D = \{  D ( t ) : t \in \mathbb{R}   \}     \in \mathcal{D}_0   $,
there exists
$ T = T ( \tau, D ) \ge 1 $
such that for all $ t \ge T $,
the solution   $u $   of   \eqref{sde3}-\eqref{sde4} satisfies,
  $$
  \int_{ \tau - 1 }^\tau
    \E \left (
     \|  u   ( s, \tau - t, u_{ \tau -t }  )
                \|_{ H^1 (\R^n ) } ^2
               + \|  u  ( s, \tau - t, u_{ \tau -t }  )
                 \|_{ L^p ( \R^n ) }^p
        \right ) ds
        $$
        $$
 \le
  e^\eta M_1
     + e^\eta M_1   \int_{ -\infty }^\tau
     e^{  \eta  ( s - \tau ) }
  \left (
     \| \phi_g (s)\|^2     +
       \| \theta (s) \|^2_{L^2(\R^n, l^2)}
       \right )  ds,
 $$
where
$ u_{ \tau - t }  \in L^2  ( \Omega, \mathcal{F}_{ \tau - t },
H )  $
with
$ \mathcal{L}_{ u_{ \tau - t } } \in    D ( \tau - t )$,
and
$\eta>0$  and
$ M_1>0 $  are the same   constants as
in \eqref{etac} and  \eqref{est1 2},
respectively.
\end{cor}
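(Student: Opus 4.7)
The plan is to derive Corollary \ref{est2} as a direct consequence of Lemma \ref{est1}, using only the elementary observation that the exponential weight $e^{\eta(s-\tau)}$ is bounded below on the shorter interval $[\tau-1,\tau]$. No new dynamical input is required; the corollary simply converts the weighted integral estimate \eqref{est1 2} into an unweighted one over a unit interval of length $1$.

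More precisely, I would start from the second conclusion of Lemma \ref{est1}. For $t \ge T(\tau,D)$ and $u_{\tau-t}$ with $\mathcal{L}_{u_{\tau-t}} \in D(\tau-t)$, Lemma \ref{est1} gives
$$
\int_{\tau-t}^{\tau} e^{\eta(s-\tau)} \E\!\left( \|u(s,\tau-t,u_{\tau-t})\|_{H^1(\R^n)}^2 + \|u(s,\tau-t,u_{\tau-t})\|_{L^p(\R^n)}^p \right) ds \le M_1 + M_1 I(\tau),
$$
where $I(\tau) = \int_{-\infty}^{\tau} e^{\eta(s-\tau)}\bigl(\|\phi_g(s)\|^2 + \|\theta(s)\|_{L^2(\R^n, l^2)}^2\bigr)\,ds$. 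Since the integrand is nonnegative and $[\tau-1,\tau] \subseteq [\tau-t,\tau]$ for $t \ge 1$, restricting the domain of integration only decreases the left-hand side.

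Next, for $s \in [\tau-1,\tau]$ we have $s-\tau \in [-1,0]$, so $e^{\eta(s-\tau)} \ge e^{-\eta}$, and therefore
$$
\int_{\tau-1}^{\tau} \E\!\left( \|u(s,\tau-t,u_{\tau-t})\|_{H^1(\R^n)}^2 + \|u(s,\tau-t,u_{\tau-t})\|_{L^p(\R^n)}^p \right) ds \le e^{\eta} \int_{\tau-1}^{\tau} e^{\eta(s-\tau)} \E(\cdots)\, ds.
$$
Combining the two inequalities above yields the desired bound $e^{\eta} M_1 + e^{\eta} M_1 I(\tau)$.

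There is no genuine obstacle here; the only point that deserves care is making sure that $t \ge T(\tau,D)$ is inherited from Lemma \ref{est1} (so the same $T$ works) and that the constants $\eta$ and $M_1$ are literally the same as in \eqref{etac} and \eqref{est1 2}. Everything else is just the elementary bound $e^{\eta(s-\tau)} \ge e^{-\eta}$ on $[\tau-1,\tau]$ together with nonnegativity of the integrand, so the proof will be only a few lines.
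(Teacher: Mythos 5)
Your argument is correct and is exactly what the paper intends: the corollary is stated as an immediate consequence of Lemma \ref{est1}, obtained by restricting the integral in \eqref{est1 2} to $[\tau-1,\tau]$ (legitimate since $t\ge T\ge 1$ and the integrand is nonnegative) and using $e^{\eta(s-\tau)}\ge e^{-\eta}$ there, which produces precisely the factor $e^\eta$ in the stated bound.
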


Next, we establish the uniform
estimates of solutions in $H^1(\R^n)$.

\begin{lem}\label{est3}
If  {\bf  (H1)}-{\bf (H4)}, \eqref{lamc} and \eqref{phi_g}  hold,
 then for every $ \tau \in \mathbb{R} $
and
$ D = \{  D ( t ) : t \in \mathbb{R}   \}     \in \mathcal{D}_0   $,
there exists
$ T = T ( \tau, D ) \ge 1 $
such that for all $ t \ge T $,
the solution   $u $   of   \eqref{sde3}-\eqref{sde4} satisfies,
  $$
    \E \left (
     \|  u   ( \tau, \tau - t, u_{ \tau -t }  )
                \|_{ H^1 (\R^n ) } ^2
        \right )
        $$
        $$
 \le
   M_2
     +   M_2   \int_{ -\infty }^\tau
     e^{  \eta  ( s - \tau ) }
  \left (
     \| \phi_g (s)\|^2     +
       \| \theta (s) \|^2_{L^2(\R^n, l^2)}
       \right )  ds,
 $$
where
$ u_{ \tau - t }  \in L^2  ( \Omega, \mathcal{F}_{ \tau - t },
H )  $
with
$ \mathcal{L}_{ u_{ \tau - t } } \in    D ( \tau - t )$,
$\eta>0$     is  the same   constant as
in \eqref{etac},
 and $M_2=M_2(\tau)>0$
 depends on $\tau$ but not on $D$.
  \end{lem}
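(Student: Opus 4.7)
Since Lemma~\ref{est1} already bounds $\E\|u(\tau,\tau-t,u_{\tau-t})\|^2$ by a quantity of the announced form, it suffices to establish the analogous bound for $\E\|\nabla u(\tau,\tau-t,u_{\tau-t})\|^2$. The plan is to apply It\^o's formula to $e^{\eta t}\|\nabla u(t)\|^2$, justified through Galerkin truncation (or the variational It\^o formula in the Gelfand triple $H^1(\R^n)\cap L^p(\R^n)\hookrightarrow H\hookrightarrow (H^1(\R^n)\cap L^p(\R^n))^{*}$), and then to close the estimate by a uniform Gronwall argument that feeds in the time-integrated bounds from Corollary~\ref{est2}.

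The It\^o identity, using $(\Delta u,u)=-\|\nabla u\|^2$, reads
\begin{align*}
d\bigl(e^{\eta t}\|\nabla u\|^2\bigr)
={}&\eta e^{\eta t}\|\nabla u\|^2\,dt-2e^{\eta t}\|\Delta u\|^2\,dt-2\lambda e^{\eta t}\|\nabla u\|^2\,dt\\
&+2e^{\eta t}(\Delta u,f)\,dt-2e^{\eta t}(\Delta u,g)\,dt+e^{\eta t}\|\nabla\sigma\|_{L_2(l^2,H)}^2\,dt\\
&-2e^{\eta t}(\Delta u,\sigma\,dW).
\end{align*}
For the nonlinear drift, integration by parts yields
\[
(\Delta u,f)=-\int_{\R^n}\nabla u\cdot\partial_x f\,dx-\int_{\R^n}|\nabla u|^2\partial_u f\,dx,
\]
and the one-sided bound $\partial_u f\ge-\phi_4$ from \eqref{f4}, together with $|\partial_x f|\le\phi_5(1+|u|+\sqrt{\mu(\|\cdot\|^2)})$ from \eqref{f5}, yield (after Young's inequality) an upper estimate of the form $C(\|\nabla u\|^2+\|u\|^2+\mu(\|\cdot\|^2)+\|\phi_5\|_{L^2}^2)$. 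The term $-2(\Delta u,g)$ is treated in the same way via \eqref{g3a} and \eqref{g4}. For the It\^o correction, differentiating $\theta_k+\kappa\,\sigma_k(t,u,\mu)$ in $x$ and using \eqref{s1}, \eqref{s3}, $\kappa\in H^1\cap W^{1,\infty}$, and the summability of $\{\beta_k\},\{\gamma_k\},\{L_{\sigma,k}\}$, one obtains
\[
\|\nabla\sigma\|_{L_2(l^2,H)}^2\le C\Bigl(\sum_{k=1}^\infty\|\nabla\theta_k\|^2+1+\mu(\|\cdot\|^2)+\|u\|^2+\|\nabla u\|^2\Bigr).
\]

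After the stopping-time truncation $\tau_m=\inf\{t:\|u(t)\|>m\}$ from the proof of Lemma~\ref{est1}, taking expectation, letting $m\to\infty$ by Fatou and dropping the non-negative $\|\Delta u\|^2$-term, one arrives, for every $r\in[\tau-1,\tau]$, at
\[
e^{\eta\tau}\E\|\nabla u(\tau)\|^2\le e^{\eta r}\E\|\nabla u(r)\|^2+C\int_{\tau-1}^{\tau}e^{\eta s}\Bigl(\E\|u(s)\|_{H^1}^2+\E\|u(s)\|_{L^p}^p+\mathcal E(s)\Bigr)ds,
\]
where $\mathcal E(s)=\sum_k\|\nabla\theta_k(s)\|^2+\|\theta(s)\|_{L^2(\R^n,l^2)}^2+\|\phi_g(s)\|^2+1$. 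Averaging this inequality in $r$ over $[\tau-1,\tau]$ replaces the unknown $e^{\eta r}\E\|\nabla u(r)\|^2$ by its time average, which is dominated by $e^{\eta\tau}\int_{\tau-1}^\tau\E\|u(s)\|_{H^1}^2\,ds$ and hence controlled by Corollary~\ref{est2}. Dividing through by $e^{\eta\tau}$ and invoking \eqref{theta1} and \eqref{phi_g} to absorb $\int_{\tau-1}^\tau\sum_k\|\nabla\theta_k(s)\|^2\,ds$ into a $\tau$-dependent constant then yields exactly the bound stated, with $M_2=M_2(\tau)$ as claimed.

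The main obstacle is the term $(\Delta u,f)$: because $f$ may have arbitrary polynomial growth $|f|\lesssim|u|^{p-1}$, the direct Cauchy-Schwarz estimate $|(\Delta u,f)|\le\|\Delta u\|\,\|f\|$ cannot be controlled by the quantities already at our disposal. Integration by parts is essential, and it is precisely the structural hypotheses \eqref{f4}-\eqref{f5} on $\partial_u f$ and $\partial_x f$ (rather than on $f$ itself) that make the resulting expressions amenable to the $L^2$- and $H^1$-in-time bounds already established.
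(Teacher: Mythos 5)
Your proposal is correct and follows essentially the same route as the paper: Itô's formula for $\|\nabla u\|^2$, integration by parts in the drift term so that only $\partial_u f\ge -\phi_4$ and the growth bound on $\partial_x f$ (rather than the polynomial growth of $f$ itself) enter, the analogous treatment of $g$ and of the Hilbert--Schmidt correction via \eqref{theta1}, \eqref{s1}, \eqref{s3}, and finally averaging over $r\in(\tau-1,\tau)$ to control the initial gradient by the time-integrated $H^1$ bound of Corollary \ref{est2}. The only cosmetic differences are your inclusion of the exponential weight $e^{\eta t}$ (harmless over a unit interval) and your explicit stopping-time justification, which the paper replaces by the remark that the formal computation can be justified by a limiting process.
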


\begin{proof}
We just formally derive the uniform
estimates of solutions, which can be justified by
a limiting process.

By \eqref{sde3} and  Ito's formula, we
get for $ \tau \in \mathbb{R} $,  $ t \ge  1 $ and $ r \in ( \tau-1, \tau ) $,
 $$
  \|  \nabla u   ( \tau, \tau-t, u_{ \tau -t } )   \|^2
  +  2 \int_r^\tau \| \Delta  u  ( s, \tau-t, u_{ \tau -t } )     \|^2    ds
  + 2  \lambda  \int_r ^\tau
\|\nabla u   ( s, \tau-t, u_{ \tau -t } )
\|^2     ds
$$
$$
 =  \|  \nabla u   ( r, \tau-t, u_{ \tau -t } )   \|^2
   - 2 \int_r ^\tau
   \int_{\R^n}
     \nabla f   ( s, x, u   ( s, \tau-t, u_{ \tau -t },
     \mathcal{L}_{u(s)} )
     \cdot \nabla     u   ( s, \tau-t, u_{ \tau -t } )
                     dx  ds
$$
$$
 +2 \int_r ^\tau
   \int_{\R^n}
     \nabla g   ( s, x, u   ( s, \tau-t, u_{ \tau -t },
     \mathcal{L}_{u(s, \tau-t, u_{ \tau -t } )} )
     \cdot \nabla     u   ( s, \tau-t, u_{ \tau -t } )
                     dx  ds
$$
$$
 +  \int_r^ \tau
\| \nabla           \sigma
  (  s, u   ( s, \tau-t, u_{ \tau -t } ), \mathcal{L}_{u(s,
  \tau-t, u_{ \tau -t } )} )
  \|^2_{L_2(l^2, H)} ds
 $$
 \be \label{est3 p1}
  + 2 \int_r^ \tau
\left ( \nabla  u   ( s, \tau-t, u_{ \tau -t } ),
\nabla   \sigma   ( s,  u   ( s, \tau-t, u_{ \tau -t } )
                           , \mathcal{L}_{u
            ( s, \tau-t, u_{ \tau -t } ) }  ) dW(s)
            \right )       .
\ee
For the  second term
on the right-hand side
of  \eqref{est3 p1}, by \eqref{f4} and \eqref{f5}, we have
for
$ r \in ( \tau-1, \tau ) $,
$$
  - 2 \int_r^ \tau
   \int_{\R^n}
     \nabla f   ( s, x, u   ( s, \tau-t, u_{ \tau -t },
     \mathcal{L}_{u(s, \tau-t, u_{ \tau -t } )} )
     \cdot \nabla     u   ( s, \tau-t, u_{ \tau -t } )
                     dx  ds
$$
$$
= - 2 \int_r^ \tau    \int_{\R^n}
            \frac{ \partial f  }{ \partial x }
               ( s, x, u  ( s, \tau-t, u_{ \tau -t } ),
                \mathcal{L}_{u(s, \tau-t, u_{ \tau -t } )}        )
                        \cdot  \nabla u  ( s, \tau-t, u_{ \tau -t } )  dx
                        ds
$$
$$
             - 2 \int_r^ \tau
   \int_{\R^n}
           \frac{ \partial f  }{ \partial u  }
            ( s, x, u  ( s, \tau-t, u_{ \tau -t } ),
             \mathcal{L}_{u(s, \tau-t, u_{ \tau -t } )}
                   )
                         |  \nabla u
                          ( s, \tau-t, u_{ \tau -t } ) |^2  dx
                   ds
$$
$$
\le
2 \int_r^ \tau    \int_{\R^n}
\phi_5 (s,x)
\left (
1+  | u  ( s, \tau-t, u_{ \tau -t } )|
+ \sqrt{
\E (\| u  ( s )\|^2)
}
\right ) |  \nabla u  ( s, \tau-t, u_{ \tau -t } ) | dx
                        ds
$$
$$
           +2 \int_r^ \tau
   \int_{\R^n}
   \phi_4(s,x)   |  \nabla u
                          ( s, \tau-t, u_{ \tau -t } ) |^2  dx
                   ds
$$
$$
\le
2 \int_r^ \tau    \int_{\R^n}
|\phi_5 (s,x)|
|  \nabla u  ( s, \tau-t, u_{ \tau -t } ) | dx
                        ds
      $$
   $$
                        +
                          \int_r^ \tau    \int_{\R^n}
|\phi_5 (s,x)|
\left (
  | u  ( s, \tau-t, u_{ \tau -t } )|^2
+
\E (\| u  ( s )\|^2)
 +2 |  \nabla u  ( s, \tau-t, u_{ \tau -t } ) |^2
 \right )  dx
                        ds
$$
$$
           +2 \int_r^ \tau
   \int_{\R^n}
   \phi_4(s,x)   |  \nabla u
                          ( s, \tau-t, u_{ \tau -t } ) |^2  dx
                   ds
$$
$$
\le
 \int_r^ \tau
\left (
\| \phi_5 (s)\|^2 +
\| \nabla u  ( s, \tau-t, u_{ \tau -t } ) \|^2
\right )
                        ds
      $$
   $$
                        +
                          \int_r^ \tau
\|\phi_5 (s)\|_{L^\infty (\R^n)}
\left (
  \| u  ( s, \tau-t, u_{ \tau -t } )\|^2
  +2 \|  \nabla u  ( s, \tau-t, u_{ \tau -t } ) \|^2
 \right )    ds
$$
$$
 +        \int_r^ \tau
\|\phi_5 (s)\|_{L^1 (\R^n)}
 \E (\| u  ( s )\|^2) ds
 +
            2 \int_r^ \tau
   \| \phi_4(s)   \|_{L^\infty
   (\R^n)}
   \|  \nabla u
                          ( s, \tau-t, u_{ \tau -t } ) \|^2
                   ds
$$
$$
\le
 \left (
 1 + 2 \| \phi_5\|_{L^\infty
 (\R, L^\infty (\R^n))}
 +
 2 \| \phi_4\|_{L^\infty
 (\R, L^\infty (\R^n))}
 \right )
   \int_{\tau -1}^ \tau
\|   u  ( s  ) \|^2_{H^1(\R^n)}
     ds
     $$
     \be\label{est3 p2}
     +
\| \phi_5  \|^2_{L^\infty
(\R, L^2(\R^n)) }
     +
     \| \phi_5\|_{L^\infty
 (\R, L^1(\R^n))}  \int_{\tau -1}^ \tau
    \E (\| u  ( s , \tau-t, u_{ \tau -t } )\|^2) ds.
\ee

For the third term on the right-hand side of \eqref{est3 p1},
by \eqref{g3a} and \eqref{g4}  we have
for all $r\in (\tau -1, \tau)$,
 $$
  2 \int_r ^\tau
   \int_{\R^n}
     \nabla g   ( s, x, u   ( s, \tau-t, u_{ \tau -t },
     \mathcal{L}_{u(s, \tau-t, u_{ \tau -t } )} )
     \cdot \nabla     u   ( s, \tau-t, u_{ \tau -t } )
                     dx  ds
$$
$$
  = 2 \int_r ^\tau
   \int_{\R^n}
     {\frac {\partial g}{\partial x}}
         ( s, x, u   ( s, \tau-t, u_{ \tau -t },
     \mathcal{L}_{u(s, \tau-t, u_{ \tau -t } )} )
     \cdot \nabla     u   ( s, \tau-t, u_{ \tau -t } )
                     dx  ds
$$
$$
 + 2 \int_r ^\tau
   \int_{\R^n}
     {\frac {\partial g}{\partial u}}
         ( s, x, u   ( s, \tau-t, u_{ \tau -t },
     \mathcal{L}_{u(s, \tau-t, u_{ \tau -t } )} )
     \cdot \nabla     u   ( s, \tau-t, u_{ \tau -t } )
                     dx  ds
$$
$$
 \le
 2 \int_r ^\tau
   \int_{\R^n}
  \left (
  \phi_8 (s,x)
  +\phi_7(s,x)
  (|u (     s, \tau-t, u_{ \tau -t })|
  +\sqrt{
  \E (\| u(s)\|^2)
  } \  ) \right )
     |\nabla     u   ( s  )|
                     dx  ds
$$
$$
 + 2 \int_r ^\tau
   \int_{\R^n}
   \phi_7 (s,x)
    | \nabla     u   ( s, \tau-t, u_{ \tau -t } )|
                     dx  ds
$$
$$
 \le
   \int_r ^\tau
  \left (
  \|\phi_8 (s)\|^2
  + \| \nabla u (     s, \tau-t, u_{ \tau -t }) \|^2
  \right ) ds
  $$
  $$
   + \int_r ^\tau
   \|\phi_7(s)\|_{L^\infty(\R^n)}
   \left (
 \|u (     s, \tau-t, u_{ \tau -t })\|^2
 + 2 \|\nabla u (     s, \tau-t, u_{ \tau -t })\|^2
 \right ) ds
 $$
 $$ + \int_r ^\tau    \|\phi_7(s)\|_{L^1(\R^n)}
   \E (\| u( s, \tau-t, u_{ \tau -t })\|^2)
    ds
 $$
$$
 +  \int_r ^\tau
   \left (
   \| \phi_7 (s)  \|^2 +
    \| \nabla     u   ( s, \tau-t, u_{ \tau -t } )\|^2
    \right )  ds
$$
$$
\le
\| \phi_8  \|^2_{L^2
(\tau -1, \tau, L^2(\R^n)) }
+\| \phi_7\|_{L^\infty
 (\R, L^2 (\R^n))}
$$
$$
 + 2
 \left (
 1 +   \| \phi_7\|_{L^\infty
 (\R, L^\infty (\R^n))}
  \right )
   \int_{\tau -1}^ \tau
\|   u  ( s  ) \|^2_{H^1(\R^n)}
     ds
     $$
     \be\label{est3 p3}
     +
     \| \phi_7\|_{L^\infty
 (\R, L^1(\R^n))}  \int_{\tau -1}^ \tau
    \E (\| u  ( s , \tau-t, u_{ \tau -t } )\|^2) ds.
\ee

For the  fourth term on the right-hand side of \eqref{est3 p1},
  we   have
for all $r\in (\tau -1, \tau)$,
  $$
   \int_r^ \tau
\| \nabla           \sigma
  (  s, u   ( s, \tau-t, u_{ \tau -t } ), \mathcal{L}_{u(s,
  \tau-t, u_{ \tau -t } )} )
  \|^2_{L_2(l^2, H)} ds
 $$
 $$
 =\sum_{k=1}^\infty
 \int_r^ \tau
 \| \nabla \theta_k (s)
 +\nabla \left (
 \kappa  \sigma_k (s, u(s, \tau -t, u_{ \tau -t }),
 \mathcal{L}_{u(s,
  \tau-t, u_{ \tau -t } ) })
 \right )\|^2 ds
 $$
  $$
    \le
   2
   \sum_{k=1}^\infty
    \int_r^ \tau
   \|\nabla \theta_k (s) \|^2ds
   +
  4
   \sum_{k=1}^\infty
    \int_r^ \tau
    \|(\nabla \kappa)
    \sigma_k (s, u(s, \tau -t, u_{ \tau -t }),
 \mathcal{L}_{u(s,
  \tau-t, u_{ \tau -t } ) })
  \|^2 ds
  $$
\be\label{est3 p4}
 +4
   \sum_{k=1}^\infty
    \int_r^ \tau
 \| \kappa  \nabla u(s, \tau -t, u_{ \tau -t })
 \cdot
 {\frac {\partial \sigma_k}{\partial u}}
 (s, u(s, \tau -t, u_{ \tau -t }),
 \mathcal{L}_{u(s,
  \tau-t, u_{ \tau -t } ) })
  \|^2 ds .
  \ee
 For the second term on the right-hand side
 of \eqref{est3 p3}, by \eqref{s1}  we have
 for $r\in(\tau -1, \tau)$,
  $$
  4
   \sum_{k=1}^\infty
    \int_r^ \tau
    \|(\nabla \kappa)
    \sigma_k (s, u(s, \tau -t, u_{ \tau -t }),
 \mathcal{L}_{u(s,
  \tau-t, u_{ \tau -t } ) })
  \|^2 ds
  $$
 $$
 \le
 16
   \sum_{k=1}^\infty \beta_k^2
    \int_r^ \tau
    \int_{\R^n}
    |\nabla \kappa (x)|^2
    \left (1+ \E( \|u(s,
  \tau-t, u_{ \tau -t } )    \|^2)
  \right )   dx ds
  $$
  $$
  + 8
   \sum_{k=1}^\infty \gamma_k^2
    \int_r^ \tau
    \int_{\R^n}
     |\nabla \kappa (x)|^2
    |u(s,
  \tau-t, u_{ \tau -t } )|^2 dx ds
   $$
 $$
 \le
 16 \|\beta\|^2_{l^2}
 \|\nabla \kappa \|^2
 \left (1 + \int_{\tau -1}^\tau
 \E( \|u(s,
  \tau-t, u_{ \tau -t } )    \|^2) ds
  \right )
  $$
  \be\label{est3 p5}
  + 8 \|\gamma\|^2_{l^2}
  \|\nabla  \kappa \|_{L^\infty (\R^n)}
  \int_{\tau -1}^\tau
  \|u(s,
  \tau-t, u_{ \tau -t } )   \|^2 ds.
  \ee
  For the last term on the right-hand side
  of \eqref{est3 p4}, by
  \eqref{s3}  we have for $r\in (\tau -1, \tau)$,
   $$
   4
   \sum_{k=1}^\infty
    \int_r^ \tau
 \| \kappa  \nabla u(s, \tau -t, u_{ \tau -t })
 \cdot
 {\frac {\partial \sigma_k}{\partial u}}
 (s, u(s, \tau -t, u_{ \tau -t }),
 \mathcal{L}_{u(s,
  \tau-t, u_{ \tau -t } ) })
  \|^2 ds
  $$
\be\label{est3 p6}
  \le 4
   \sum_{k=1}^\infty L_{\sigma, k}^2
   \|\kappa \|^2_{L^\infty(\R^n)}
    \int_r^ \tau
    \| \nabla u(s, \tau -t, u_{ \tau -t })\|^2 ds.
 \ee

  By
  \eqref{theta1} and
   \eqref{est3 p4}-\eqref{est3 p6}
  we get for all
  $r\in (\tau -1, \tau)$,
 $$
   \int_r^ \tau
\| \nabla           \sigma
  (  s, u   ( s, \tau-t, u_{ \tau -t } ), \mathcal{L}_{u(s,
  \tau-t, u_{ \tau -t } )} )
  \|^2_{L_2(l^2, H)} ds
 $$
\be\label{est3 p7}
 \le
 c_1 +c_1
 \int_{\tau -1}^\tau
 \left (
 \E \left (
 \|   u(s, \tau -t, u_{ \tau -t })\|^2
 \right )
 +
 \|   u(s, \tau -t, u_{ \tau -t })\|^2 _{H^1(\R^n)}
 \right ) ds,
 \ee
 where $c_1=c_1(\tau)>0$ depends on $\tau$, but
 not on $D$.
  It follows from
 \eqref{est3 p1}-\eqref{est3 p3}
 and \eqref{est3 p7} that
   for  all $ \tau \in \mathbb{R} $,  $ t \ge  1 $
 and $ r \in ( \tau-1, \tau ) $,
 $$
  \E \left (\|  \nabla u   ( \tau, \tau-t, u_{ \tau -t } )   \|^2
    \right )
  \le
 \E \left (
 \|  \nabla u   ( r, \tau-t, u_{ \tau -t } )   \|^2
 \right )
 $$
 \be\label{est3 p8}
  +
 c_2 +c_2
 \int_{\tau -1}^\tau
  \E   \left (
 \|   u(s, \tau -t, u_{ \tau -t })\|^2 _{H^1(\R^n)}
 \right ) ds,
 \ee
 where $c_2=c_2(\tau)>0$ depends on $\tau$, but
 not on $D$.
 Integrating \eqref{est3 p8} with respect to
 $r$ on $(\tau -1, \tau)$, we get
  for  all $ \tau \in \mathbb{R} $ and  $ t \ge  1 $,
 $$
  \E \left (\|  \nabla u   ( \tau, \tau-t, u_{ \tau -t } )   \|^2
    \right )
  \le
 c_2 +(1+ c_2)
 \int_{\tau -1}^\tau
  \E   \left (
 \|   u(s, \tau -t, u_{ \tau -t })\|^2 _{H^1(\R^n)}
 \right ) ds,
$$
 where $c_2=c_2(\tau)>0$ depends on $\tau$, but
 not on $D$,
 which along with Lemma \ref{est1}
 and Corollary  \ref{est2}
 concludes the proof.
\end{proof}

   Next, we derive the uniform estimates on the tails
   of solutions which will be used
   to
   overcome the non-compactness
   of Sobolev embeddings on unbounded domains
   when proving
     the asymptotic compactness
   of distributions of solutions.

\begin{lem}\label{est4}
If  {\bf  (H1)}-{\bf (H4)}, \eqref{lamc} and \eqref{phi_g}  hold,
 then for every
 $\delta>0$,
 $ \tau \in \mathbb{R} $
and
$ D = \{  D ( t ) : t \in \mathbb{R}   \}
  \in \mathcal{D} _0  $,
there exist
$ T = T ( \delta, \tau, D ) \ge 1 $
and $N=N(\delta, \tau)\in \N$
such that for all $ t \ge T $ and $n\ge N$,
the solution   $u $   of   \eqref{sde3}-\eqref{sde4} satisfies,
  $$
    \E \left (
    \int_{|x|\ge n}
    |    u   ( \tau, \tau - t, u_{ \tau -t }  ) (x)|^2
    dx \right )
    <\delta,
    $$
where
$ u_{ \tau - t }  \in L^2  ( \Omega, \mathcal{F}_{ \tau - t },
H )  $
with
$ \mathcal{L}_{ u_{ \tau - t } } \in    D ( \tau - t )$.
 \end{lem}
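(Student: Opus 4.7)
The plan is to apply the uniform cut-off (tail) method, adapted to the distribution-dependent setting. I fix a smooth non-decreasing function $\rho:[0,\infty)\to[0,1]$ with $\rho(s)=0$ for $s\le 1$, $\rho(s)=1$ for $s\ge 2$, and $|\rho'|\le C_0$, and set $\rho_n(x)=\rho(|x|^2/n^2)$. Then $\rho_n\equiv 0$ on $\{|x|\le n\}$, $\rho_n\equiv 1$ on $\{|x|\ge\sqrt 2\, n\}$, and $|\nabla\rho_n(x)|\le 2C_0/n$. The core computation is It\^o's formula applied to the functional $u\mapsto \int_{\R^n}\rho_n(x)|u(x)|^2\,dx$ along the solution of \eqref{sde3}-\eqref{sde4}, using the stopping times $\tau_m=\inf\{s:\|u(s)\|>m\}$ exactly as in Lemma \ref{est1} to kill the stochastic integral. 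After multiplication by $e^{\eta s}$, taking expectation, passing $m\to\infty$ by Fatou, and retracing the steps of Lemma \ref{est1} with every spatial integral now modulated by $\rho_n$, I expect an inequality of the shape
$$
\E\!\left[\int_{\R^n}\rho_n\,|u(\tau,\tau-t,u_{\tau-t})|^2\,dx\right]
\le e^{-\eta t}\,\E\!\left[\int_{\R^n}\rho_n\,|u_{\tau-t}|^2\,dx\right]+R_n(\tau)+\frac{C(\tau)}{n},
$$
with $R_n(\tau)\to 0$ as $n\to\infty$ and $C(\tau)$ independent of both $n$ and $t$.

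The remainder $R_n(\tau)$ collects the spatial tails of the data: $\int_{|x|\ge n}|\phi_1(s,x)|\,dx$ and $\int_{|x|\ge n}\psi_1(x)\,dx$ from \eqref{f2}; $\int_{|x|\ge n}|\phi_g(s,x)|^2\,dx$, $\int_{|x|\ge n}|\phi_7(s,x)|\,dx$ and $\int_{|x|\ge n}|\psi_g(x)|\,dx$ from \eqref{g3}; and $\int_{|x|\ge n}\|\theta(s,x)\|_{l^2}^2\,dx$ together with $\int_{|x|\ge n}|\kappa(x)|^2\,dx$ from \eqref{s4}, each multiplied by the corresponding exponential factor or by the uniform second-moment bound of Lemma \ref{est1}. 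By (H1)--(H4), \eqref{phi_g}, and $\kappa\in H^1(\R^n)\cap W^{1,\infty}(\R^n)$, every one of these tail integrals tends to $0$ as $n\to\infty$ by dominated convergence. The error $C(\tau)/n$ originates from the Laplacian cross-term $-2\int u\,\nabla u\cdot\nabla\rho_n\,dx$, which is controlled through $|\nabla\rho_n|\le 2C_0/n$, Cauchy--Schwarz, and the $L^1(\tau-1,\tau)$ bound on $\E\|u(s)\|_{H^1(\R^n)}^2$ provided by Corollary \ref{est2}.

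Given such an inequality, the conclusion is immediate: first choose $N$ so that $R_n(\tau)+C(\tau)/n<\delta/2$ for all $n\ge N$, and then, using $D\in\mathcal D_0$ together with the obvious bound $\int\rho_n|u_{\tau-t}|^2\,dx\le\|u_{\tau-t}\|^2$, choose $T$ so that $e^{-\eta t}\|D(\tau-t)\|_{\mathcal P_2(H)}^2<\delta/2$ for all $t\ge T$. The main obstacle I anticipate is the book-keeping of the \emph{distribution-dependent} contributions: the pieces $\psi_1\,\mu(\|\cdot\|^2)$ in \eqref{f2}, $\psi_g\sqrt{\mu(\|\cdot\|^2)}$ in \eqref{g3}, and the $\|\beta\|_{l^2}^2(1+\mu(\|\cdot\|^2))$-term from \eqref{s4}. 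Each of these factors the global second moment $\E\|u(s)\|^2$ (uniformly bounded on $[\tau-1,\tau]$ by Lemma \ref{est1}) out of a purely spatial tail integral of $\psi_1$, $\psi_g$, or $|\kappa|^2$, so smallness is ultimately delivered by those tail integrals; the care required is precisely that one never has pointwise decay of $u(s,x)$, only of the coefficient functions, so every moment term must be isolated from the spatial weights before taking $n\to\infty$.
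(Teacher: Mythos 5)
Your proposal matches the paper's proof essentially step for step: the same cutoff $\rho_n$, Itô's formula for the weighted $L^2$-norm with the stopping times of Lemma \ref{est1}, the $O(n^{-1})$ control of the Laplacian cross-term, and the decomposition of the remainder into spatial tails of $\phi_1,\psi_1,\phi_g,\psi_g,\theta,\kappa$ that vanish as $n\to\infty$, with the distribution-dependent moments factored out against the uniform bound of Lemma \ref{est1}. The only slip is that the cross-term accumulates over all of $[\tau-t,\tau]$, so it must be absorbed by the exponentially weighted integral bound \eqref{est1 2} of Lemma \ref{est1} rather than the $[\tau-1,\tau]$ bound of Corollary \ref{est2}; this is exactly what the paper does and does not affect the argument.
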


   \begin{proof}
   Let $\rho: \mathbb{R}^n \rightarrow [0,1]$ be a
   continuously differentiable function
   such that
   \be\label{rho}
   \rho(x)=0  \  \text{ for }
   |x| \le \frac{1}{2};
   \quad
   \rho(x)=1 \ \text{ for } \   |x| \ge 1.
   \ee
 Given $n\in \mathbb{N}$,
   denote by  $\rho_n(x) = \rho( \frac{x}{n} )$.
   For convenience,
   we write
  $u(t)= u(t, \tau, u_\tau)$
  for  the solution
of \eqref{sde3}-\eqref{sde4}
with initial data $u_\tau$
at initial time $ \tau$.

   By \eqref{sde3}   we  have
   $$
       d \rho_n u(t) -\rho_n \Delta  u (t) dt
       +\lambda \rho_n u(t) dt
         + \rho_n f(t, \cdot,  u  (t),
          \mathcal{L}_{u (t)} ) dt
          $$
          \be \label{est4 p1a}
   =   \rho_n g(t, \cdot, u (t),
   \mathcal{L}_{u  (t)} ) dt
       +
                 \rho_n  \sigma(t, u  (t),
                  \mathcal{L}_{u  (t)} ) dW(t).
\ee
  By \eqref{est4 p1a} and It\^{o}'s formula, we get for
all  $ t\ge \tau $,
 $$
     e^{\eta t} \| \rho_n u  (t) \|^2
      + 2 \int_{ \tau} ^t
                 e^{\eta s}   (\nabla (\rho_n^2 u(s)),
                  \nabla    u (s) )
                  ds
                 + (2 \lambda-\eta)
                  \int_ { \tau}^t e^{\eta s}
                  \| \rho_n  u  ( s )  \|^2 ds
                  $$
                  $$
      + 2 \int_ { \tau} ^t\int_{\mathbb{R} ^n}
      e^{\eta s}  \rho_n^2(x)
        f( s, x, u  (s,x), \mathcal{L}_{u  (s)} ) u  (s,x) dx ds
      $$
      $$
  =   e^{\eta \tau}\| \rho_n u_ { \tau} \|^2
      + 2 \int_ { \tau}^t
      e^{\eta s}
      (g(s, u  (s), \mathcal{L}_{u (s)} ),  \rho_n^2 u  (s) ) ds
      +   \int_ { \tau}^t
      e^{\eta s}
      \|\rho_n  \sigma(s, u  (s), \mathcal{L}_{u  (s)} ) \|_{L_2(l^2,H)}^2 ds
      $$
        \be \label{est4 p1}
     + 2   \int_ { \tau}^t e^{\eta s}
          \left(\rho_n^2  u  (s),
         \sigma(s, u  (s), \mathcal{L}_{u  (s)}) dW(s)
                                      \right),
 \ee
  $    \mathbb{P}$-almost surely.
Given  $m\in \N$,  denote by
$$
\tau_m
=\inf
\{
t\ge \tau:  \| u(t) \| > m
\}.
$$
By \eqref{est4 p1} we have
for all  $ t\ge \tau $,
$$
    \mathbb{E}
    \left (  e^{\eta (t\wedge \tau_m) }
    \| \rho_n u  (t\wedge \tau_m ) \|^2
      + 2 \int_{ \tau} ^{t\wedge \tau_m}
                 e^{\eta s}   (\nabla (\rho_n ^2 u(s)),
                   \nabla    u (s)
                 )  ds
     \right )
     $$
     $$=
         \mathbb{E} \left (e^{\eta \tau} \| \rho_n u_ { \tau} \|^2
         \right )
                 + (\eta - 2 \lambda )
                     \mathbb{E}
                     \left (
                  \int_ { \tau}^ {t\wedge \tau_m} e^{\eta s}
                  \|  \rho_n  u  ( s )  \|^2 ds
                  \right )
                  $$
                  $$
      - 2
       \mathbb{E}
                     \left (
      \int_ { \tau} ^ {t\wedge \tau_m}
      \int_{\mathbb{R} ^n}
      e^{\eta s}   \rho_n ^2(x)
        f( s, x, u  (s,x), \mathcal{L}_{u  (s)} ) u  (s,x) dx ds
        \right )
      $$
      $$
      + 2  \mathbb{E}
                     \left (
                     \int_ { \tau}^ {t\wedge \tau_m}
      e^{\eta s}
      (g(s, u  (s), \mathcal{L}_{u (s)} ), \rho_n ^2  u  (s) ) ds
      \right )
      $$
              \be \label{est4 p2}
      +   \mathbb{E}
                     \left (
                      \int_ { \tau}^ {t\wedge \tau_m}
      e^{\eta s}
      \|\rho_n   \sigma(s, u  (s), \mathcal{L}_{u  (s)} ) \|_{L_2(l^2,H)}^2 ds
      \right ).
  \ee
   Note that
   $$
    -2 \mathbb{E}
    \left (
        \int_{ \tau} ^{t\wedge \tau_m}
                 e^{\eta s}   (\nabla (\rho_n ^2 u(s)),
                   \nabla    u (s)
                 )  ds
     \right )
    =
    -2 \mathbb{E}
    \left (
        \int_{ \tau} ^{t\wedge \tau_m}
                 e^{\eta s}
                 \int_{\R^n}
                 \rho_n ^2
                  | \nabla    u (s,x )|^2 dx
                   ds
     \right )
     $$
     $$
      -2 \mathbb{E}
    \left (
        \int_{ \tau} ^{t\wedge \tau_m}
                 e^{\eta s}
                 \int_{\R^n}
                 2 n^{-1} u(s,x) \rho_n (x)  \nabla
                 \rho  (x/n) \cdot
                   \nabla    u (s,x )  dx
                   ds
     \right )
     $$
     $$
     \le
   4 n^{-1} \|\nabla \rho \|_{L^\infty(\R^n) } \mathbb{E}
    \left (
        \int_{ \tau} ^{t\wedge \tau_m}
                 e^{\eta s}
                 \|  u(s) \| \|
                   \nabla    u (s)\|
                   ds
     \right )
     $$
   \be\label{est4 p2a}
     \le
    2 n^{-1} \|\nabla \rho \|_{L^\infty(\R^n) } \mathbb{E}
    \left (
        \int_{ \tau} ^{t }
                 e^{\eta s}
                \|
                       u (s)\|^2_{H^1(\R^n)}
                   ds
     \right ).
     \ee

 For the
 third  term on the right-hand side of
\eqref{est4 p2} , by \eqref{f2}, we have
$$
- 2
       \mathbb{E}
                     \left (
      \int_ { \tau} ^ {t\wedge \tau_m}
      \int_{\mathbb{R} ^n}
      e^{\eta s}   \rho_n^2(x)
        f( s, x, u  (s,x), \mathcal{L}_{u  (s)} )   u  (s,x) dx ds
        \right )
        $$
$$
\le
-2\alpha_1
 \mathbb{E}
      \left (
      \int_ { \tau} ^ {t\wedge \tau_m}
      e^{\eta s}\int_{\R^n}
      \rho_n ^2(x)
       | u(s,x)  |^p  dx  ds
      \right )
      +
      2\mathbb{E}
      \left (
      \int_ { \tau} ^ {t\wedge \tau_m}
      e^{\eta s} \| \phi_1 (s)\|_{L^\infty
      (\R^n)}
      \|\rho_n  u(s) \|^2  ds
      \right )
      $$
      $$
      +2
      \mathbb{E}
      \left (
      \int_ { \tau} ^ {t\wedge \tau_m}
      e^{\eta s}
      \int_{\R^n}  \rho_n^2 (x)  |\phi_1 (s,x) |dx
       ds
      \right )
      +
      2
      \mathbb{E}
      \left (
      \int_ { \tau} ^ {t\wedge \tau_m}
      e^{\eta s}   \mathbb{E}
      \left (
      \| u(s) \|^2
      \right )
       \int_{\R^n}  \rho_n^2 (x) \psi_1(x)
    dxds   \right )
      $$
      $$
\le
-2\alpha_1
 \mathbb{E}
      \left (
      \int_ { \tau} ^ {t\wedge \tau_m}
      \int_{\R^n} e^{\eta s}
      \rho_n ^2(x)
       | u(s,x)  |^p  dx  ds
      \right )
      +
      2\mathbb{E}
      \left (
      \int_ { \tau} ^ {t}
      e^{\eta s} \| \phi_1 (s)\|_{L^\infty
      (\R^n)}
      \|\rho_n  u(s) \|^2  ds
      \right )
      $$
      $$
      +2
      \mathbb{E}
      \left (
      \int_ { \tau} ^ {t }
      \int_{\R^n} e^{\eta s}
       \rho_n^2 (x)  |\phi_1 (s,x) |dx
       ds
      \right )
      +
      2
      \mathbb{E}
      \left (
      \int_ { \tau} ^ {t }
      e^{\eta s}   \mathbb{E}
      \left (
      \| u(s) \|^2
      \right )
       \int_{\R^n}  \rho_n^2 (x) \psi_1(x)
    dxds   \right )
      $$
       $$
\le
-2\alpha_1
 \mathbb{E}
      \left (
      \int_ { \tau} ^ {t\wedge \tau_m}
      \int_{\R^n} e^{\eta s}
      \rho_n ^2(x)
       | u(s,x)  |^p  dx  ds
      \right )
      +
      2
      \int_ { \tau} ^ {t}
      e^{\eta s} \| \phi_1 (s)\|_{L^\infty
      (\R^n)}
      \mathbb{E}
      \left (
      \|\rho_n  u(s) \|^2
      \right )ds
      $$
         \be\label{est4 p3}
 +2
      \int_ { \tau} ^ {t }
      \int_{\R^n} e^{\eta s}
       \rho_n^2 (x)  |\phi_1 (s,x) |dx
       ds
      +
      2  \int_{\R^n}  \rho_n^2 (x) \psi_1(x)
    dx
      \int_ { \tau} ^ {t }
      e^{\eta s}   \mathbb{E}
      \left (
      \| u(s) \|^2
      \right ) ds.
      \ee

   For the fourth term on the
   right-hand side of
   \eqref{est1 p2},
   by \eqref{g3} we have
   $$
       2  \mathbb{E}
                     \left (
                     \int_ { \tau}^ {t\wedge \tau_m}
      e^{\eta s}
      (g(s, u  (s), \mathcal{L}_{u (s)} ), \rho_n^2 u  (s) ) ds
      \right )
      $$
      $$
      \le
      2  \mathbb{E}
                     \left (
                     \int_ { \tau}^ {t\wedge \tau_m}
      e^{\eta s}
      \int_{\R^n} \rho_n^2(x)
      \phi_g (s, x)
       |u(s,x)| dxds
       \right )
       $$
       $$
       +
       2  \mathbb{E}
                     \left (
                     \int_ { \tau}^ {t\wedge \tau_m}
      e^{\eta s}
      \int_{\R^n} \rho_n^2(x)
      \left (  \phi_7 (s, x)
      |u(s,x)|^2
      + \psi_g (x) |u(s,x)| \sqrt{
      \mathbb{E}
                     \left (
                     \| u(s)\|^2
                     \right )
                     }
                     \right )
          dx ds
     \right )
      $$
       $$
      \le
       \mathbb{E}
                     \left (
                     \int_ { \tau}^ {t\wedge \tau_m}
      e^{\eta s}
      \int_{\R^n} \rho_n^2(x)
      \left (\eta  |u(s,x)|^2
      +\eta^{-1}
      |\phi_g (s, x)|^2
      \right )
         dxds
       \right )
       $$
       $$
       +
          \mathbb{E}
                     \left (
                     \int_ { \tau}^ {t\wedge \tau_m}
      e^{\eta s}
      \int_{\R^n} \rho_n^2(x)
      \left (  (2 |\phi_7 (s, x)|
      + |\psi_g (x)|)
      |u(s,x)|^2
      + |\psi_g (x)|
      \mathbb{E}
                     \left (
                     \| u(s)\|^2
                     \right )
                     \right )
          dx ds
     \right )
      $$
       $$
      \le
        \mathbb{E}
                     \left (
                     \int_ { \tau}^ {t\wedge \tau_m}
      e^{\eta s}
      \left ( \eta \| \rho_n u(s)\|^2
      + \eta^{-1} \|\rho_n \phi_g (s) \|^2
      \right ) ds
      \right )
       $$
       $$
      +
        \mathbb{E}
                     \left (
                     \int_ { \tau}^ {t\wedge \tau_m}
      e^{\eta s}
     \| \rho_n^2 \psi_g \|_{L^1(\R^n)}
      \mathbb{E}
                     \left (
                     \| u(s)\|^2
                     \right )
            ds
     \right )
     $$
     $$
      +
        \mathbb{E}
                     \left (
                     \int_ { \tau}^ {t\wedge \tau_m}
      e^{\eta s}
    (2 \| \phi_7 (s )\|_{L^\infty (\R^n)}
    +\|\psi_g \|_{L^\infty(\R^n)} )
           \|\rho_n  u(s)\|^2
            ds
     \right )
      $$
      $$
      \le
        \mathbb{E}
                     \left (
                     \int_ { \tau}^ {t }
      e^{\eta s}
      \left ( \eta \| \rho_n u(s)\|^2
      + \eta^{-1} \|\rho_n \phi_g (s) \|^2
      \right ) ds
      \right )
       $$
       $$
      +
        \mathbb{E}
                     \left (
                     \int_ { \tau}^ {t }
      e^{\eta s}
     \| \rho_n^2 \psi_g \|_{L^1(\R^n)}
      \mathbb{E}
                     \left (
                     \| u(s)\|^2
                     \right )
            ds
     \right )
     $$
     $$
      +
        \mathbb{E}
                     \left (
                     \int_ { \tau}^ {t }
      e^{\eta s}
    (2 \| \phi_7 (s )\|_{L^\infty (\R^n)}
    +\|\psi_g \|_{L^\infty(\R^n)} )
           \|\rho_n  u(s)\|^2
            ds
     \right )
      $$
$$
      \le
               \eta^{-1}      \int_ { \tau}^ {t }
      e^{\eta s}
     \| \rho_n \phi_g (s)\|^2  ds
     +
               \int_ { \tau}^ {t }
      e^{\eta s}
      \left (\eta
      +
     2\| \phi_7 (s)\|_{L^\infty (\R^n)}
     +\|\psi_g \|_{L^\infty (\R^n)}
          \right )
        \mathbb{E}
                     \left ( \| \rho_n u(s)\|^2
                     \right )
            ds
            $$
   \be\label{est4 p4}
      +
      \|\rho_n ^2 \psi_g \|_{L^1(\R^n)}
        \int_ { \tau}^ {t }
      e^{\eta s}
        \mathbb{E}
                     \left ( \|   u(s)\|^2
                     \right )
            ds       .
     \ee

     For the last term on the right-hand side
     of \eqref{est4 p2},
     by \eqref{s1} and \eqref{sidef}  we have
      $$
         \mathbb{E}
                     \left (
                      \int_ { \tau}^ {t\wedge \tau_m}
      e^{\eta s}
      \| \rho_n \sigma(s, u  (s), \mathcal{L}_{u  (s)} ) \|_{L_2(l^2,H)}^2 ds
      \right )
      $$
         $$
         \le
          2
          \mathbb{E}
                     \left (
                      \int_ { \tau}^ {t\wedge \tau_m}
      e^{\eta s}
       \| \rho_n \theta (s) \|^2_{L^2(\R^n, l^2)} ds
       \right )
       $$
       $$
      + 8 \|\rho_n \kappa\|^2 \| \beta \|^2_{l^2}
        \mathbb{E}
                     \left (
                      \int_ { \tau}^ {t\wedge \tau_m}
      e^{\eta s}
      \left  (1+  \mathbb{E}
                     \left ( \|u(s) \|^2
                     \right )  \right ) ds \right )
                     $$
      $$
         + 4 \|\kappa\|^2_{L^\infty(\R^n)}
          \| \gamma \|^2_{l^2}
           \mathbb{E}
                     \left (
                      \int_ { \tau}^ {t\wedge \tau_m}
      e^{\eta s}
          \| \rho_n u(s) \|^2 ds
          \right  )
           $$
        $$
         \le
          2
                      \int_ { \tau}^ {t }
      e^{\eta s}
       \| \rho_n \theta (s) \|^2_{L^2(\R^n, l^2)} ds
      + 8 \|\rho_n \kappa\|^2 \| \beta \|^2_{l^2}
                \int_ { \tau}^ {t }
      e^{\eta s}
      \left (1+  \mathbb{E}
                     \left (\|  u(s) \|^2
                     \right )
                     \right ) ds
                     $$
      $$
         + 4 \|\kappa\|^2_{L^\infty(\R^n)}
          \| \gamma \|^2_{l^2}
                      \int_ { \tau}^ {t }
      e^{\eta s}             \mathbb{E}
                     \left (
          \| \rho_n u(s) \|^2
          \right ) ds
           $$
  $$
         \le
          2
                      \int_ { \tau}^ {t }
      e^{\eta s}
       \| \rho_n \theta (s) \|^2_{L^2(\R^n, l^2)} ds
      + 8 \|\rho_n \kappa\|^2 \| \beta \|^2_{l^2}
      \eta^{-1} e^{\eta t}
      $$
    \be\label{est4 p5}
         +
         8 \|\rho_n \kappa\|^2 \| \beta \|^2_{l^2}
                 \int_ { \tau}^ {t }
      e^{\eta s}             \mathbb{E}
                     \left (
          \|  u(s) \|^2
          \right ) ds
          +
           4 \|\kappa\|^2_{L^\infty(\R^n)}
          \| \gamma \|^2_{l^2}
            \int_ { \tau}^ {t }
      e^{\eta s}             \mathbb{E}
                     \left (
          \|  \rho_n u(s) \|^2
          \right ) ds .
        \ee
 It follows from
 \eqref{est4 p2}-\eqref{est4 p5}
 that for all $t\ge \tau$,
 $$
    \mathbb{E}
    \left (  e^{\eta (t\wedge \tau_m) } \|
    \rho_n u  (t\wedge \tau_m ) \|^2
      +
        (  2 \lambda -\eta )
                  \int_ { \tau}^ {t\wedge \tau_m} e^{\eta s}
                  \| \rho_n  u  ( s )  \|^2 ds
        \right )
     $$
     $$\le
         \mathbb{E} \left (e^{\eta \tau} \|
         \rho_n u_ { \tau} \|^2
         \right )
   +2
       \int_ { \tau} ^ {t }
      e^{\eta s}
      \| \rho_n^2  \phi_1 (s)\|_{L^1
      (\R^n)}     ds
       $$
         $$
     +  2 n^{-1} \|\nabla \rho \|_{L^\infty(\R^n) }
         \int_{ \tau} ^{t }
                 e^{\eta s}
               \mathbb{E}
    \left (  \|
                       u (s)\|^2_{H^1(\R^n)}
        \right )             ds
      $$
      $$
      +
        \eta^{-1}      \int_ { \tau}^ {t }
      e^{\eta s}
     \| \rho_n \phi_g (s)\|^2  ds
             +
          2
                      \int_ { \tau}^ {t }
      e^{\eta s}
       \|\rho_n  \theta (s) \|^2_{L^2(\R^n, l^2)} ds
      + 8 \|\rho_n \kappa\|^2 \| \beta \|^2_{l^2}
      \eta^{-1} e^{\eta t}
      $$
  $$
      +
      \int_ { \tau} ^ {t }
      e^{\eta s}
      \left (
      4 \|\kappa\|^2_{L^\infty(\R^n)}
          \| \gamma \|^2_{l^2}
       +  2 \| \phi_1 (s)\|_{L^\infty
      (\R^n)}
        \right )
      \mathbb{E}
      \left (
      \| \rho_n u(s) \|^2
      \right )   ds
 $$
  $$
     +
               \int_ { \tau}^ {t }
      e^{\eta s}
      \left (  \eta
      + 2
     \| \phi_7 (s )\|_{L^\infty (\R^n)}
     + \| \psi_g \|_{L^\infty (\R^n)}
          \right )
        \mathbb{E}
                     \left ( \| \rho_n u(s)\|^2
                     \right )
            ds
            $$
     \be\label{est4 p6}
         +
         \left(
          8 \|\rho_n \kappa\|^2 \| \beta \|^2_{l^2}
         + 2\| \rho_n^2 \psi_1 \|_{L^1(\R^n)}
         +
         \| \rho_n^2 \psi_g \|_{L^1(\R^n)}
          \right )
                      \int_ { \tau}^ {t }
      e^{\eta s}             \mathbb{E}
                     \left (
          \| u(s) \|^2
          \right ) ds .
        \ee
      Taking the limit of \eqref{est4 p6}
      as $m \to \infty$, by Fatou's lemma we obtain
      for all $t\ge \tau$,
 $$
    \mathbb{E}
    \left (  e^{\eta t} \|
    \rho_n u  (t  ) \|^2
      +
        (  2 \lambda -\eta )
                  \int_ { \tau}^ {t } e^{\eta s}
                  \| \rho_n  u  ( s )  \|^2 ds
        \right )
     $$
     $$\le
         \mathbb{E} \left (e^{\eta \tau} \|
         \rho_n u_ { \tau} \|^2
         \right )
   +2
       \int_ { \tau} ^ {t }
      e^{\eta s}
      \| \rho_n^2  \phi_1 (s)\|_{L^1
      (\R^n)}     ds
       $$
         $$
     +  2 n^{-1} \|\nabla \rho \|_{L^\infty(\R^n) }
         \int_{ \tau} ^{t }
                 e^{\eta s}
               \mathbb{E}
    \left (  \|
                       u (s)\|^2_{H^1(\R^n)}
        \right )             ds
      $$
      $$
      +
        \eta^{-1}      \int_ { \tau}^ {t }
      e^{\eta s}
     \| \rho_n \phi_g (s)\|^2  ds
             +
          2
                      \int_ { \tau}^ {t }
      e^{\eta s}
       \|\rho_n  \theta (s) \|^2_{L^2(\R^n, l^2)} ds
      + 8 \|\rho_n \kappa\|^2 \| \beta \|^2_{l^2}
      \eta^{-1} e^{\eta t}
      $$
  $$
      +
      \int_ { \tau} ^ {t }
      e^{\eta s}
      \left (
      4 \|\kappa\|^2_{L^\infty(\R^n)}
          \| \gamma \|^2_{l^2}
       +  2 \| \phi_1 (s)\|_{L^\infty
      (\R^n)}
        \right )
      \mathbb{E}
      \left (
      \| \rho_n u(s) \|^2
      \right )   ds
 $$
  $$
     +
               \int_ { \tau}^ {t }
      e^{\eta s}
      \left (  \eta
      + 2
     \| \phi_7 (s )\|_{L^\infty (\R^n)}
     + \| \psi_g \|_{L^\infty (\R^n)}
          \right )
        \mathbb{E}
                     \left ( \| \rho_n u(s)\|^2
                     \right )
            ds
            $$
     \be\label{est4 p6}
         +
         \left(
          8 \|\rho_n \kappa\|^2 \| \beta \|^2_{l^2}
         + 2\| \rho_n^2 \psi_1 \|_{L^1(\R^n)}
         +
         \| \rho_n^2 \psi_g \|_{L^1(\R^n)}
          \right )
                      \int_ { \tau}^ {t }
      e^{\eta s}             \mathbb{E}
                     \left (
          \| u(s) \|^2
          \right ) ds .
        \ee

        By \eqref{etac}
         and \eqref{est4 p6}
         we get
            for all $t\ge \tau$,
 $$
    \mathbb{E}
    \left (   \|
    \rho_n u  (t  ) \|^2
        \right )
     \le e^{\eta  (\tau-t) }
         \mathbb{E} \left ( \|
         \rho_n u_ { \tau} \|^2
         \right )
   +2
       \int_ { \tau} ^ {t }
      e^{\eta (s-t) }
      \| \rho_n^2  \phi_1 (s)\|_{L^1
      (\R^n)}     ds
       $$
         $$
     +  2 n^{-1} \|\nabla \rho \|_{L^\infty(\R^n) }
         \int_{ \tau} ^{t }
                 e^{\eta  (s-t) }
               \mathbb{E}
    \left (  \|
                       u (s)\|^2_{H^1(\R^n)}
        \right )             ds
      $$
      $$
      +
        \eta^{-1}      \int_ { \tau}^ {t }
      e^{\eta  (s-t)}
     \| \rho_n \phi_g (s)\|^2  ds
             +
          2
                      \int_ { \tau}^ {t }
      e^{\eta  (s-t) }
       \|\rho_n  \theta (s) \|^2_{L^2(\R^n, l^2)} ds
      + 8 \|\rho_n \kappa\|^2 \| \beta \|^2_{l^2}
      \eta^{-1}
      $$
      \be\label{est4 p7}
         +
         \left(
          8 \|\rho_n \kappa\|^2 \| \beta \|^2_{l^2}
         + 2\| \rho_n^2 \psi_1 \|_{L^1(\R^n)}
         +
         \| \rho_n^2 \psi_g \|_{L^1(\R^n)}
          \right )
                      \int_ { \tau}^ {t }
      e^{\eta  (s-t)}             \mathbb{E}
                     \left (
          \| u(s) \|^2
          \right ) ds .
        \ee

   Replacing $\tau$ and $t$
   in \eqref{est4 p7} by $\tau -t$
   and  $\tau$, respectively ,
         we get
            for all $t\ge 0$,
            $$
    \mathbb{E}
    \left (   \|
    \rho_n u  (\tau, \tau -t, u_{\tau -t}   ) \|^2
        \right )
     \le e^{-\eta  t   }
         \mathbb{E} \left ( \|
         \rho_n u_ { \tau -t } \|^2
         \right )
   +2
       \int_ { \tau -t} ^ {\tau }
      e^{\eta (s-\tau) }
      \| \rho_n^2  \phi_1 (s)\|_{L^1
      (\R^n)}     ds
       $$
         $$
     +  2 n^{-1} \|\nabla \rho \|_{L^\infty(\R^n) }
         \int_ { \tau -t} ^ {\tau }
      e^{\eta (s-\tau) }
               \mathbb{E}
    \left (  \|
                       u (s,\tau-t, u_{\tau -t}
                       )\|^2_{H^1(\R^n)}
        \right )             ds
      $$
      $$
      +
        \eta^{-1}       \int_ { \tau -t} ^ {\tau }
      e^{\eta (s-\tau) }
     \| \rho_n \phi_g (s)\|^2  ds
             +
          2
                      \int_ { \tau -t} ^ {\tau }
      e^{\eta (s-\tau) }
       \|\rho_n  \theta (s) \|^2_{L^2(\R^n, l^2)} ds
      + 8 \|\rho_n \kappa\|^2 \| \beta \|^2_{l^2}
      \eta^{-1}
      $$
      \be\label{est4 p8}
         +
         \left(
          8 \|\rho_n \kappa\|^2 \| \beta \|^2_{l^2}
         + 2\| \rho_n^2 \psi_1 \|_{L^1(\R^n)}
         +
         \| \rho_n^2 \psi_g \|_{L^1(\R^n)}
          \right )
                       \int_ { \tau -t} ^ {\tau }
      e^{\eta (s-\tau) }
                 \mathbb{E}
                     \left (
          \| u(s) \|^2
          \right ) ds .
        \ee
        By \eqref{est4 p8} and Lemma \ref{est1}
        we find that there exist
        $c_1=c_1(\tau)>0$ and
         $T_1=T_1(\tau, D)\ge 1$ such that
        for all $t\ge T_1$,
          $$
    \mathbb{E}
    \left (   \|
    \rho_n u  (\tau, \tau -t, u_{\tau -t}   ) \|^2
        \right )
     \le e^{-\eta  t   }
         \mathbb{E} \left ( \|
          u_ { \tau -t } \|^2
         \right )
   +2
       \int_ { -\infty } ^ {\tau }
      e^{\eta (s-\tau) }
      \| \rho_n^2  \phi_1 (s)\|_{L^1
      (\R^n)}     ds
       $$
         $$
      +
        \eta^{-1}       \int_ { -\infty } ^ {\tau }
      e^{\eta (s-\tau) }
     \| \rho_n \phi_g (s)\|^2  ds
             +
          2
                      \int_ { -\infty } ^ {\tau }
      e^{\eta (s-\tau) }
       \|\rho_n  \theta (s) \|^2_{L^2(\R^n, l^2)} ds
       $$
       $$
      + 8 \|\rho_n \kappa\|^2 \| \beta \|^2_{l^2}
      \eta^{-1}
      +  2c_1 n^{-1} \|\nabla \rho \|_{L^\infty(\R^n) }
      $$
      \be\label{est4 p9}
         +
         \left(
          8 \|\rho_n \kappa\|^2 \| \beta \|^2_{l^2}
         + 2\| \rho_n^2 \psi_1 \|_{L^1(\R^n)}
         +
         \| \rho_n^2 \psi_g \|_{L^1(\R^n)}
          \right )c_1.
         \ee

  Since  $ \mathcal{L}_{ u_{\tau-t}  }  \in   D ( \tau-t )$,
  we have
 $$
\lim_{t \to \infty}
 e^{-\eta t }
         \mathbb{E} \left ( \|   u_ { \tau -t} \|^2
         \right )
           \le  e^{- \eta \tau }
          \lim_{t \to \infty}
 e^{\eta (\tau-t) }\| D(\tau -t)\|^2_{\mathcal{P}_2 ( H ) } =0,
         $$
         and hence for every $\delta>0$,
         there
         exists $T_2=T_2(\delta, \tau, D ) \ge T_1$
         such that for all $t\ge T_2$
\be\label{est4 p10}
e^{-\eta t }
         \mathbb{E} \left ( \|  u_ { \tau -t} \|^2
         \right )
         <\delta.
         \ee
         For the second term on the right-hand side
         of \eqref{est4 p9} we have
         $$
         \int_{-\infty}^\tau\int_{\R^n}
         e^{\eta (s-\tau)} |\phi_1 (s,x)|dxds
         \le
          \int_{-\infty}^\tau
         e^{\eta (s-\tau)} \|\phi_1 (s)\|_{L^1(\R^n)}
          ds
          \le \eta^{-1} \| \phi_1 \|_{L^\infty(\R, L^1(\R^n))}
          <\infty,
         $$
         and hence as $n \to \infty$,
        $$ 2
       \int_ { -\infty } ^ {\tau }
      e^{\eta (s-\tau) }
      \| \rho_n^2  \phi_1 (s)\|_{L^1
      (\R^n)}     ds
      \le  2
 \int_{-\infty}^\tau\int_{|x|\ge {{\frac 12} n}}
         e^{\eta (s-\tau)} |\phi_1 (s,x)|dxds
         \to 0,
         $$
 which shows that there exists $N_1=N_1 (\delta, \tau)
 \in \N$ such that
 for all $n \ge N_1$,
\be\label{est4 p11}
2
       \int_ { -\infty } ^ {\tau }
      e^{\eta (s-\tau) }
      \| \rho_n^2  \phi_1 (s)\|_{L^1
      (\R^n)}     ds
 <\delta.
 \ee
 For the third term on the right-hand side of \eqref{est4 p9},
 by \eqref{phi_g} we have, as $n\to \infty$,
    $$
        \eta^{-1}       \int_ { -\infty } ^ {\tau }
      e^{\eta (s-\tau) }
     \| \rho_n \phi_g (s)\|^2  ds
             +
          2
                      \int_ { -\infty } ^ {\tau }
      e^{\eta (s-\tau) }
       \|\rho_n  \theta (s) \|^2_{L^2(\R^n, l^2)} ds
       $$
       $$
       \le
          \eta^{-1} e^{-\eta \tau  }
             \int_ { -\infty } ^ {\tau }
             \int_{|x|\ge { {\frac 12} n}}
      e^{\eta  s  }|g(s,x)|^2 dx ds
      $$
      $$
             +
          2 e^{-\eta \tau  }
                      \int_ { -\infty } ^ {\tau }
                       \int_{|x|\ge { {\frac 12} n}}
      e^{\eta  s }
      \sum_{k=1}^\infty
      |\theta_k (s,x)|^2dxds
      \to 0,
 $$
 and hence there exists $N_2=N_2(\delta,\tau)
 \ge N_1$ such that
 for all $n\ge N_2$,
\be\label{est4 p12}
        \eta^{-1}       \int_ { -\infty } ^ {\tau }
      e^{\eta (s-\tau) }
     \| \rho_n \phi_g (s)\|^2  ds
             +
          2
                      \int_ { -\infty } ^ {\tau }
      e^{\eta (s-\tau) }
       \|\rho_n  \theta (s) \|^2_{L^2(\R^n, l^2)} ds
       <\delta.
  \ee
 For the last three terms on the right-hand side
 of \eqref{est4 p9} we have, as $n\to \infty$,
  $$
       8 \|\rho_n \kappa\|^2 \| \beta \|^2_{l^2}
      \eta^{-1}
      +  2c_1 n^{-1} \|\nabla \rho \|_{L^\infty(\R^n) }
      $$
   $$
         +
         \left(
          8 \|\rho_n \kappa\|^2 \| \beta \|^2_{l^2}
         + 2\| \rho_n^2 \psi_1 \|_{L^1(\R^n)}
         +
         \| \rho_n^2 \psi_g \|_{L^1(\R^n)}
          \right )c_1
          $$
          $$
          \le
       8  \| \beta \|^2_{l^2}
      \eta^{-1}   \int_{|x|\ge { {\frac 12} n}}
       \kappa^2 (x) dx
        +  2c_1 n^{-1} \|\nabla \rho \|_{L^\infty(\R^n) }
      $$
   $$
         +
         \left(
           8| \beta \|^2_{l^2}
          \int_{|x|\ge { {\frac 12} n}}
       \kappa^2 (x) dx
          + 2     \int_{|x|\ge { {\frac 12} n}}
       \|\psi_1 (x)|   dx
         +     \int_{|x|\ge { {\frac 12} n}}
       |\psi_g (x) |
          \right )c_1
          \to 0,
          $$
 and hence there exists $N_3=N_3(\delta, \tau)
 \ge N_2$ such that
  $$
       8 \|\rho_n \kappa\|^2 \| \beta \|^2_{l^2}
      \eta^{-1}
      +  2c_1 n^{-1} \|\nabla \rho \|_{L^\infty(\R^n) }
      $$
  \be\label{est4 p13}
         +
         \left(
          8 \|\rho_n \kappa\|^2 \| \beta \|^2_{l^2}
         + 2\| \rho_n^2 \psi_1 \|_{L^1(\R^n)}
         +
         \| \rho_n^2 \psi_g \|_{L^1(\R^n)}
          \right )c_1 <\delta.
        \ee

 It
  follows from \eqref{est4 p9}-\eqref{est4 p13} that
  for all $t\ge T_2$ and $n\ge N_3$,
          $$
           \mathbb{E}
    \left (  \int_{|x|\ge n}
   | u  (\tau, \tau -t, u_{\tau -t}   ) (x)|^2
   dx
   \right )
   \le
    \mathbb{E}
    \left (   \|
    \rho_n u  (\tau, \tau -t, u_{\tau -t}   ) \|^2
        \right )
        <4 \delta,
        $$
        as desired.
   \end{proof}

   Next, we derive
   the  uniform estimates of solutions
   of \eqref{sde3}-\eqref{sde4}
   in $L^4(\Omega, \mathcal{F}, H)$.

\begin{lem}\label{est5}
If  {\bf  (H1)}-{\bf (H4)},
\eqref{lamc} and \eqref{phi_ga}
hold,
 then for every $ \tau \in \mathbb{R} $
and
$ D = \{  D ( t ) : t \in \mathbb{R}   \}     \in \mathcal{D}   $,
there exists
$ T = T ( \tau, D ) >0 $
such that for all $ t \ge T $,
the solution   $u $   of   \eqref{sde3}-\eqref{sde4} satisfies,
\be\label{est5 1}
\E  \left (  \|  u  ( \tau, \tau - t, u_{\tau -t}  )  \|^4
    \right )
\le   M_3
     + M_3  \int_{ -\infty }^\tau
     e^{  2\eta  ( s - \tau ) }
  \left (
     \| \phi_g (s)\|^4     +
       \| \theta (s) \|^4_{L^2(\R^n, l^2)}
       \right )  ds,
\ee
 where
$ u_{ \tau - t }  \in L^4  ( \Omega, \mathcal{F}_{ \tau - t },
H )  $
with
$ \mathcal{L}_{ u_{ \tau - t } } \in    D ( \tau - t )$,
  $\eta>0$ is the same number as in \eqref{etac},
  and
$ M_3>0 $  is a   constant independent of $ \tau $
 and $D$.
\end{lem}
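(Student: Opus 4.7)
The plan is to parallel the proof of Lemma \ref{est1}, but applied to $\|u(t)\|^4=(\|u(t)\|^2)^2$ with weight $e^{2\eta t}$ instead of $e^{\eta t}$. Concretely, I would first apply the It\^o formula to $\|u(t)\|^2$ exactly as in the derivation leading to \eqref{est1 p1}, and then apply It\^o again to the map $x \mapsto x^2$ (using $d\|u\|^4=2\|u\|^2 d\|u\|^2+d[\|u\|^2,\|u\|^2]_t$). The quadratic variation produces the extra term $4\|\sigma^*(t,u,\mathcal{L}_u) u\|_{l^2}^2 dt$, which is bounded by $4\|\sigma(t,u,\mathcal{L}_u)\|_{L_2(l^2,H)}^2 \|u\|^2 dt$. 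Using stopping times $\tau_m=\inf\{t\ge\tau : \|u(t)\|>m\}$ to kill the stochastic integral before taking expectations, and then sending $m\to\infty$ via Fatou's lemma, gives an integral identity for $e^{2\eta t}\E \|u(t)\|^4$ whose drift must be estimated term by term.

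The bulk of the work is then to show that all non-sign-definite drift contributions are controlled by $C\int_\tau^t e^{2\eta s}\E\|u(s)\|^4 ds$ plus explicit forcing terms. Each of the bounds from \eqref{est1 p3}--\eqref{est1 p5} is multiplied by $2\|u\|^2$ and handled via Young's/H\"older's inequality as follows: $\|u\|^2 \|\phi_1(s)\|_{L^1}\le \tfrac12\eta^{-1}\|\phi_1(s)\|_{L^1}^2+\tfrac{\eta}{2}\|u\|^4$; $\|u\|^2\|\phi_g(s)\|\,|u|\le \eta^{-1}\|\phi_g(s)\|^4/(2\eta)+\cdots$ after splitting once more; and the distribution-dependent pieces $\mu(\|\cdot\|^2)\|u\|^2$ are first averaged, giving $(\E\|u\|^2)^2\le \E\|u\|^4$ by Jensen. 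Similarly, the new quadratic-variation contribution $4\|\sigma\|_{L_2(l^2,H)}^2\|u\|^2$, expanded via \eqref{s4}, yields terms of the form $\|\theta(s)\|_{L^2(\R^n,l^2)}^2\|u\|^2$ (absorbed as $\|\theta\|^4/(2\eta)+\tfrac\eta2\|u\|^4$), $\|\kappa\|^2\|\beta\|_{l^2}^2(1+\mu(\|\cdot\|^2))\|u\|^2$ (absorbed via Jensen into $\E\|u\|^4$), and $\|\kappa\|_{L^\infty}^2\|\gamma\|_{l^2}^2\|u\|^4$. The crucial bookkeeping is that the coefficient of $\E\|u\|^4$ on the right-hand side is exactly
\[
24\|\kappa\|^2\|\beta\|_{l^2}^2 + 12\|\kappa\|_{L^\infty(\R^n)}^2\|\gamma\|_{l^2}^2 + 2\|\phi_1\|_{L^\infty(\R,L^\infty(\R^n))} + 2\|\psi_1\|_{L^1(\R^n)} + \|\psi_g\|_{L^1\cap L^\infty} + 2\|\phi_7\|_{L^\infty(\R,L^\infty\cap L^1)} + 3\eta,
\]
which by \eqref{etac} is strictly less than $4\lambda$. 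Hence the $-4\lambda e^{2\eta s}\E\|u(s)\|^4$ term coming from $-2\|u\|^2\cdot 2\lambda\|u\|^2$ dominates all these positive contributions, leaving a net $-\eta e^{2\eta s}\E\|u(s)\|^4$-type bound to spare.

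After reorganising, I expect an inequality of the form
\[
e^{2\eta t}\E\|u(t)\|^4 \le e^{2\eta\tau}\E\|u_\tau\|^4 + C\int_\tau^t e^{2\eta s}\bigl(\|\phi_g(s)\|^4+\|\theta(s)\|_{L^2(\R^n,l^2)}^4+1\bigr)\,ds
\]
where $C>0$ is independent of $\tau$, $t$ and $D$. Dividing by $e^{2\eta t}$, replacing $(\tau,t)$ by $(\tau-t,\tau)$, and using $D\in\mathcal{D}$ (so $e^{2\eta(\tau-t)}\|D(\tau-t)\|_{\mathcal{P}_4(H)}^4\to 0$ as $t\to\infty$) to conclude that $e^{-2\eta t}\E\|u_{\tau-t}\|^4\le 1$ for $t\ge T(\tau,D)$, together with \eqref{phi_ga} to bound the constant tail, delivers \eqref{est5 1}.

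The main obstacle is the book-keeping in Step 2: the new $4\|\sigma^*u\|_{l^2}^2$ term doubles each coefficient $\|\kappa\|^2\|\beta\|^2,\|\kappa\|_{L^\infty}^2\|\gamma\|^2$ appearing in the corresponding second-moment estimate, and likewise the $\|u\|^2\cdot\text{(lower-order)}$ cross terms, after Young's inequality, contribute an additional $\eta$ to the coefficient of $\E\|u\|^4$. This is exactly why the stronger condition \eqref{etac} (with $2\lambda-3\eta$ in place of the $\lambda$ used in \eqref{lamc}) was imposed in the first place, and verifying that no coefficient exceeds its allotted budget is the principal technical burden.
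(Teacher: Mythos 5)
Your proposal is correct and follows essentially the same route as the paper: It\^o's formula for the fourth power (with the quadratic-variation term bounded by $4\|\sigma\|_{L_2(l^2,H)}^2\|u\|^2$ and merged with the $2\|u\|^2\|\sigma\|_{L_2(l^2,H)}^2$ term), localization by the stopping times $\tau_m$ plus Fatou, term-by-term Young/Jensen estimates, absorption of the drift via \eqref{etac}, and the final pullback step using $D\in\mathcal{D}$ and \eqref{phi_ga}. The only blemish is that your ``exact'' coefficient of $\E\|u\|^4$ understates several terms by a factor of $2$ (the paper gets $48\|\kappa\|^2\|\beta\|_{l^2}^2+24\|\kappa\|_{L^\infty}^2\|\gamma\|_{l^2}^2+4\|\phi_1\|+4\|\psi_1\|+\cdots$), but since \eqref{etac} is precisely twice \eqref{lamc} with $3\eta$ to spare — a doubling you yourself flag in your last paragraph — the budget still closes and the argument is unaffected.
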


\begin{proof}
 Let  $u(t)= u(t, \tau, u_\tau)$ be the solution
of \eqref{sde3}-\eqref{sde4}
with initial data $u_\tau$
at initial time $ \tau$.

By \eqref{est1 p1} and It\^{o}'s formula, we get for
all  $ t\ge \tau $,
 $$
     e^{2\eta t} \| u  (t) \|^4
      + 4 \int_{ \tau} ^t
                 e^{2 \eta   s}  \|     u (s) \|^2
                 \| \nabla    u (s)
                 \|^2 ds
                 + 2 (2 \lambda-\eta)
                  \int_ { \tau}^t e^{ 2 \eta s}
                  \|  u  ( s )  \|^4 ds
                  $$
                  $$
      + 4 \int_ { \tau} ^t
       e^{2\eta s}   \|     u (s) \|^2
      \int_{\mathbb{R} ^n}
        f( s, x, u  (s,x), \mathcal{L}_{u  (s)} ) u  (s,x) dx ds
      $$
      $$
  = e^{2 \eta \tau}\| u_ { \tau} \|^4
      + 4\int_ { \tau}^t
      e^{2 \eta s}    \|     u (s) \|^2
      (g(s, u  (s), \mathcal{L}_{u (s)} ),  u  (s) ) ds
     $$
     $$ +   2 \int_ { \tau}^t
      e^{2 \eta s}   \|     u (s) \|^2
      \| \sigma(s, u  (s), \mathcal{L}_{u  (s)} ) \|_{L_2(l^2,H)}^2 ds
      $$
      $$
      +     4 \int_ { \tau}^t
      e^{2 \eta s}   \|     u^* (s)
      \sigma(s, u  (s), \mathcal{L}_{u  (s)} ) \|_{L_2(l^2,\R)}^2
      ds
      $$
        \be \label{est5 p1}
     + 4  \int_ { \tau}^t e^{2\eta s}
      \|     u (s) \|^2
          \left( u  (s),
         \sigma(s, u  (s), \mathcal{L}_{u  (s)}) dW(s)
                                      \right),
 \ee
  $    \mathbb{P}$-almost surely,
  where $u^*(s)$  is the element in $H^*$
  identified by the Riesz representation theorem.
   Let
$
\tau_m
=\inf
\{
t\ge \tau:  \| u(t) \| > m
\}.
$
By \eqref{est1 p1} we have
for
all  $ t\ge \tau $,
 $$
     e^{2\eta (t\wedge \tau_m)} \| u   (t\wedge \tau_m) \|^4
      + 4 \int_{ \tau} ^ {t\wedge \tau_m}
                 e^{2 \eta   s}  \|     u (s) \|^2
                 \| \nabla    u (s)
                 \|^2 ds
                 + 2 (2 \lambda-\eta)
                  \int_ { \tau}^ {t\wedge \tau_m} e^{ 2 \eta s}
                  \|  u  ( s )  \|^4 ds
                  $$
                  $$
      + 4 \int_ { \tau} ^ {t\wedge \tau_m}
       e^{2\eta s}   \|     u (s) \|^2
      \int_{\mathbb{R} ^n}
        f( s, x, u  (s,x), \mathcal{L}_{u  (s)} ) u  (s,x) dx ds
      $$
      $$
  = e^{2 \eta \tau}\| u_ { \tau} \|^4
      + 4\int_ { \tau}^ {t\wedge \tau_m}
      e^{2 \eta s}    \|     u (s) \|^2
      (g(s, u  (s), \mathcal{L}_{u (s)} ),  u  (s) ) ds
     $$
     $$ +   2 \int_ { \tau}^ {t\wedge \tau_m}
      e^{2 \eta s}   \|     u (s) \|^2
      \| \sigma(s, u  (s), \mathcal{L}_{u  (s)} ) \|_{L_2(l^2,H)}^2 ds
      $$
      $$
      +     4 \int_ { \tau}^ {t\wedge \tau_m}
      e^{2 \eta s}   \|     u^* (s)
      \sigma(s, u  (s), \mathcal{L}_{u  (s)} ) \|_{L_2(l^2,\R)}^2
      ds
      $$
        \be \label{est5 p2a}
     + 4  \int_ { \tau}^ {t\wedge \tau_m}  e^{2\eta s}
      \|     u (s) \|^2
          \left( u  (s),
         \sigma(s, u  (s), \mathcal{L}_{u  (s)}) dW(s)
                                      \right).
 \ee
Taking the expectation of \eqref{est5 p2a} we get,
for
all  $ t\ge \tau $,
 $$
     \E \left (
     e^{2\eta (t\wedge \tau_m)} \| u   (t\wedge \tau_m) \|^4
     \right )
          + 2 (2 \lambda-\eta)
          \E \left (
                  \int_ { \tau}^ {t\wedge \tau_m} e^{ 2 \eta s}
                  \|  u  ( s )  \|^4 ds
                  \right )
                  $$
                  $$
      \le
      \E \left (e^{2 \eta \tau}\| u_ { \tau} \|^4
      \right )
      - 4
      \E \left (
      \int_ { \tau} ^ {t\wedge \tau_m}
       e^{2\eta s}   \|     u (s) \|^2
      \int_{\mathbb{R} ^n}
        f( s, x, u  (s,x), \mathcal{L}_{u  (s)} ) u  (s,x) dx ds
        \right )
      $$
      $$
      + 4
      \E \left (
      \int_ { \tau}^ {t\wedge \tau_m}
      e^{2 \eta s}    \|     u (s) \|^2
      (g(s, u  (s), \mathcal{L}_{u (s)} ),  u  (s) ) ds
      \right )
     $$
         \be \label{est5 p2}
         +   6
     \E \left (
     \int_ { \tau}^ {t\wedge \tau_m}
      e^{2 \eta s}   \|     u (s) \|^2
      \| \sigma(s, u  (s), \mathcal{L}_{u  (s)} ) \|_{L_2(l^2,H)}^2 ds   \right ).
      \ee
 For the
 second   term on the right-hand side of
\eqref{est5 p2} , by \eqref{f2}, we have
$$
- 4
      \E \left (
      \int_ { \tau} ^ {t\wedge \tau_m}
       e^{2\eta s}   \|     u (s) \|^2
      \int_{\mathbb{R} ^n}
        f( s, x, u  (s,x), \mathcal{L}_{u  (s)} ) u  (s,x) dx ds
        \right )
      $$
$$
\le
-4\alpha_1
 \mathbb{E}
      \left (
      \int_ { \tau} ^ {t\wedge \tau_m}
      e^{2\eta s}
       \| u(s) \|^2
      \| u(s) \|^p_{L^p(\R^n)} ds
      \right )
      +
      4\mathbb{E}
      \left (
      \int_ { \tau} ^ {t\wedge \tau_m}
      e^{2 \eta s} \| \phi_1 (s)\|_{L^\infty
      (\R^n)}
      \| u(s) \|^4  ds
      \right )
      $$
      $$
      +4
      \mathbb{E}
      \left (
      \int_ { \tau} ^ {t\wedge \tau_m}
      e^{2\eta s}  \| u(s) \|^2
      \left (
      \| \phi_1 (s)\|_{L^1
      (\R^n)}
       +
        \| \psi_1 \|_{L^1
      (\R^n)}
       \mathbb{E}
      \left (
      \| u(s) \|^2
      \right )
      \right )  ds
      \right )
      $$
      $$ \le
      4
      \int_ { \tau} ^ {t }
      e^{2 \eta s} \| \phi_1 (s)\|_{L^\infty
      (\R^n)}
      \mathbb{E}
      \left (\| u(s) \|^4
      \right )  ds
      $$
      $$
      +4
      \int_ { \tau} ^ {t }
      e^{2\eta s}
      \| \phi_1 (s)\|_{L^1
      (\R^n)}
      \mathbb{E}
      \left (
      \| u(s) \|^2
      \right )
      + 4
      \int_ { \tau} ^ {t }
      e^{2\eta s}
        \| \psi_1 \|_{L^1
      (\R^n)}
      \left (
       \mathbb{E}
      \left (
      \| u(s) \|^2
      \right )
      \right )^2  ds
      $$
       $$
          \le
       \int_ { \tau} ^ {t }
      e^{2 \eta s}
      \left (4 \| \phi_1 (s)\|_{L^\infty
      (\R^n)}
      +4  \| \psi_1 \|_{L^1
      (\R^n)}
      + {\frac 13} \eta
      \right )
      \mathbb{E}
      \left (\| u(s) \|^4
      \right )  ds
      $$
        \be\label{est5 p3}
      + 12 \eta^{-1}
        \int_ { \tau} ^ {t }
      e^{2 \eta s}
      \| \phi_1 (s)\|^2 _{L^1
      (\R^n)} ds.
      \ee
     For the third term on the
   right-hand side of
   \eqref{est5 p2},
   by \eqref{g3} we have
       $$
      4
      \E \left (
      \int_ { \tau}^ {t\wedge \tau_m}
      e^{2 \eta s}    \|     u (s) \|^2
      (g(s, u  (s), \mathcal{L}_{u (s)} ),  u  (s) ) ds
      \right )
      $$
      $$
      \le 2
        \mathbb{E}
                     \left (
                     \int_ { \tau}^ {t\wedge \tau_m}
      e^{2\eta s}
      \left ( \eta \| u(s)\|^4
      + \eta^{-1} \|\phi_g (s) \|^2\| u(s)\|^2
      \right ) ds
      \right )
       $$
       $$
      +2
        \mathbb{E}
                     \left (
                     \int_ { \tau}^ {t\wedge \tau_m}
      e^{2\eta s}   \| u(s)\|^2
     \| \psi_g \|_{L^1(\R^n)}
      \mathbb{E}
                     \left (
                     \| u(s)\|^2
                     \right )
            ds
     \right )
     $$
     $$
      + 2
        \mathbb{E}
                     \left (
                     \int_ { \tau}^ {t\wedge \tau_m}
      e^{2\eta s}
     (2\| \phi_7 (s )\|_{L^\infty (\R^n)}
     +\|\psi_g\|_{L^\infty (\R^n)})
           \| u(s)\|^4
            ds
     \right )
      $$
       $$
      \le 2
        \mathbb{E}
                     \left (
                     \int_ { \tau}^ {t }
      e^{2\eta s}
      \left ({\frac 43}  \eta \| u(s)\|^4
      + {\frac 34}  \eta^{-3} \|\phi_g (s) \|^4
      \right ) ds
      \right )
       $$
       $$
      +2
        \mathbb{E}
                     \left (
                     \int_ { \tau}^ {t }
      e^{2\eta s}   \| u(s)\|^2
     \| \psi_g \|_{L^1(\R^n)}
      \mathbb{E}
                     \left (
                     \| u(s)\|^2
                     \right )
            ds
     \right )
     $$
     $$
      + 2
        \mathbb{E}
                     \left (
                     \int_ { \tau}^ {t }
      e^{2\eta s}
     (2\| \phi_7 (s )\|_{L^\infty (\R^n)}
     +\|\psi_g\|_{L^\infty (\R^n)})
           \| u(s)\|^4
            ds
     \right )
      $$
      $$
      \le
              {\frac 32}
               \eta^{-3}      \int_ { \tau}^ {t }
      e^{2 \eta s}
     \| \phi_g (s)\|^4  ds
     $$
        \be\label{est5 p4}
     +
             2  \int_ { \tau}^ {t }
      e^{2 \eta s}
      \left (\eta +  \| \psi_g \|_{L^1 (\R^n)}
      + 2
     \| \phi_7 (s)\|_{L^\infty (\R^n)}
     +\|\psi_g \|_{L^\infty (\R^n)}
          \right )
        \mathbb{E}
                     \left ( \| u(s)\|^2
                     \right )
            ds .
            \ee
      For the last term on the right-hand side
     of \eqref{est5 p2},
     by \eqref{s4} we have
     $$  6
     \E \left (
     \int_ { \tau}^ {t\wedge \tau_m}
      e^{2 \eta s}   \|     u (s) \|^2
      \| \sigma(s, u  (s), \mathcal{L}_{u  (s)} ) \|_{L_2(l^2,H)}^2 ds   \right )
      $$
         $$
         \le
        12
          \mathbb{E}
                     \left (
                      \int_ { \tau}^ {t\wedge \tau_m}
      e^{2\eta s}  \|u(s)\|^2
       \| \theta (s) \|^2_{L^2(\R^n, l^2)} ds
       \right )
       $$
       $$
      + 48 \|\kappa\|^2 \| \beta \|^2_{l^2}
        \mathbb{E}
                     \left (
                      \int_ { \tau}^ {t\wedge \tau_m}
      e^{2 \eta s}   \|u(s)\|^2
      \left  (1+  \mathbb{E}
                     \left ( \|u(s) \|^2
                     \right )  \right ) ds \right )
                     $$
      $$
         + 24\|\kappa\|^2_{L^\infty(\R^n)}
          \| \gamma \|^2_{l^2}
           \mathbb{E}
                     \left (
                      \int_ { \tau}^ {t\wedge \tau_m}
      e^{2 \eta s}
          \| u(s) \|^4 ds
          \right  )
           $$
           $$
         \le
        12
          \mathbb{E}
                     \left (
                      \int_ { \tau}^ {t }
      e^{2\eta s}  \|u(s)\|^2
       \| \theta (s) \|^2_{L^2(\R^n, l^2)} ds
       \right )
       $$
       $$
      + 48 \|\kappa\|^2 \| \beta \|^2_{l^2}
        \mathbb{E}
                     \left (
                      \int_ { \tau}^ {t }
      e^{2 \eta s}   \|u(s)\|^2
      \left  (1+  \mathbb{E}
                     \left ( \|u(s) \|^2
                     \right )  \right ) ds \right )
                     $$
      $$
         + 24\|\kappa\|^2_{L^\infty(\R^n)}
          \| \gamma \|^2_{l^2}
           \mathbb{E}
                     \left (
                      \int_ { \tau}^ {t }
      e^{2 \eta s}
          \| u(s) \|^4 ds
          \right  )
           $$
            $$
         \le
        {\frac 13}\eta
                      \int_ { \tau}^ {t }
      e^{2\eta s}
       \mathbb{E}
                     \left (
      \|u(s)\|^4
      \right ) ds
      + 108 \eta^{-1}   \int_ { \tau}^ {t }
      e^{2\eta s}
       \| \theta (s) \|^4_{L^2(\R^n, l^2)} ds
       $$
       $$
      + 48 \|\kappa\|^2 \| \beta \|^2_{l^2}
                      \int_ { \tau}^ {t }
      e^{2 \eta s}    \mathbb{E}
                     \left (
                     \|u(s)\|^4
                     \right ) ds
                     +
                      48 \|\kappa\|^2 \| \beta \|^2_{l^2}
                      \int_ { \tau}^ {t }
      e^{2 \eta s}    \mathbb{E}
                     \left (
                     \|u(s)\|^2
                     \right ) ds
                     $$
      $$
         + 24\|\kappa\|^2_{L^\infty(\R^n)}
          \| \gamma \|^2_{l^2}
                      \int_ { \tau}^ {t }
      e^{2 \eta s}
            \mathbb{E}
                     \left (\| u(s) \|^4
                     \right )
                     ds
           $$
           $$
           \le  \left (
           {\frac 12}\eta
           +48 \|\kappa\|^2 \| \beta \|^2_{l^2}
         + 24\|\kappa\|^2_{L^\infty(\R^n)}
          \| \gamma \|^2_{l^2}\right )
              \int_ { \tau}^ {t }
      e^{2 \eta s}
            \mathbb{E}
                     \left (\| u(s) \|^4
                     \right )
                     ds
           $$
    \be\label{est5 p5}
           +  108 \eta^{-1}   \int_ { \tau}^ {t }
      e^{2\eta s}
       \| \theta (s) \|^4_{L^2(\R^n, l^2)} ds
       + 1728\eta^{-2} \|\kappa\|^4 \| \beta \|^4_{l^2}
       e^{2\eta t} .
       \ee

 It follows from
 \eqref{est5 p2}-\eqref{est5 p5}
 that for all $t\ge \tau$,
 $$
     \E \left (
     e^{2\eta (t\wedge \tau_m)} \| u   (t\wedge \tau_m) \|^4
     \right )
          + 2 (2 \lambda-\eta)
          \E \left (
                  \int_ { \tau}^ {t\wedge \tau_m} e^{ 2 \eta s}
                  \|  u  ( s )  \|^4 ds
                  \right )
                  $$
        $$\le
         \mathbb{E} \left (e^{2\eta \tau} \| u_ { \tau} \|^4
         \right )
  + 12 \eta^{-1}
       \int_ { \tau} ^ {t }
      e^{ 2 \eta s}
      \| \phi_1 (s)\|^2_{L^1
      (\R^n)}     ds
   $$
      $$
      +
       {\frac 32} \eta^{-3}      \int_ { \tau}^ {t }
      e^{2 \eta s}
     \| \phi_g (s)\|^4  ds
             +
                108 \eta^{-1}   \int_ { \tau}^ {t }
      e^{2\eta s}
       \| \theta (s) \|^4_{L^2(\R^n, l^2)} ds
       + 1728\eta^{-2} \|\kappa\|^4 \| \beta \|^4_{l^2}
       e^{2\eta t} .
      $$
      $$
      +
      2
      \int_ { \tau} ^ {t }
      e^{2 \eta s}
      \left (  2\| \phi_1 (s)\|_{L^\infty
      (\R^n)}
      + 2 \| \psi_1 \|_{L^1
      (\R^n)}
      +{\frac 16} \eta
      \right )
      \mathbb{E}
      \left (
      \| u(s) \|^4
      \right )   ds
 $$
  $$
     +
           2    \int_ { \tau}^ {t }
      e^{2\eta s}
      \left (  \eta + \| \psi_g \|_{L^1 (\R^n)}
      + 2
     \| \phi_7 (s )\|_{L^\infty (\R^n)}
     +\| \psi_g   \|_{L^\infty (\R^n)}
          \right )
        \mathbb{E}
                     \left ( \| u(s)\|^4
                     \right )
            ds
            $$
     \be\label{est5 p6}
         +
         \left({\frac 12}\eta
         + 48 \|\kappa\|^2 \| \beta \|^2_{l^2}
         +
         24 \|\kappa\|^2_{L^\infty(\R^n)}
          \| \gamma \|^2_{l^2}
          \right )
                      \int_ { \tau}^ {t }
      e^{2 \eta s}             \mathbb{E}
                     \left (
          \| u(s) \|^4
          \right ) ds .
        \ee
      Taking the limit of \eqref{est5 p6}
      as $m \to \infty$, by Fatou's lemma we obtain
      for all $t\ge \tau$,
 $$
     \E \left (  \| u   (t ) \|^4
     \right )
          + 2 (2 \lambda-\eta)
          \E \left (
                  \int_ { \tau}^ {t } e^{ 2 \eta (s-t) }
                  \|  u  ( s )  \|^4 ds
                  \right )
                  $$
        $$\le
         \mathbb{E} \left (e^{2\eta (\tau-t) } \| u_ { \tau} \|^4
         \right )
  + 12 \eta^{-1}
       \int_ { \tau} ^ {t }
      e^{ 2 \eta  (s-t)}
      \| \phi_1 (s)\|^2_{L^1
      (\R^n)}     ds
   $$
      $$
      +
       {\frac 32} \eta^{-3}      \int_ { \tau}^ {t }
      e^{2 \eta  (s-t) }
     \| \phi_g (s)\|^4  ds
             +
                108 \eta^{-1}   \int_ { \tau}^ {t }
      e^{2\eta  (s-t)}
       \| \theta (s) \|^4_{L^2(\R^n, l^2)} ds
       + 1728\eta^{-2} \|\kappa\|^4 \| \beta \|^4_{l^2}
      $$
      $$
      +
      2
      \int_ { \tau} ^ {t }
      e^{2 \eta  (s-t)}
      \left (  2\| \phi_1 (s)\|_{L^\infty
      (\R^n)}
      + 2 \| \psi_1 \|_{L^1
      (\R^n)}
      +{\frac 16} \eta
      \right )
      \mathbb{E}
      \left (
      \| u(s) \|^4
      \right )   ds
 $$
  $$
     +
           2    \int_ { \tau}^ {t }
      e^{2\eta  (s-t)}
      \left (  \eta + \| \psi_g \|_{L^1 (\R^n)}
      + 2
     \| \phi_7 (s )\|_{L^\infty (\R^n)}
     +\| \psi_g   \|_{L^\infty (\R^n)}
          \right )
        \mathbb{E}
                     \left ( \| u(s)\|^4
                     \right )
            ds
            $$
     \be\label{est5 p6a}
         +
         \left({\frac 12}\eta
         + 48 \|\kappa\|^2 \| \beta \|^2_{l^2}
         +
         24 \|\kappa\|^2_{L^\infty(\R^n)}
          \| \gamma \|^2_{l^2}
          \right )
                      \int_ { \tau}^ {t }
      e^{2 \eta  (s-t)}             \mathbb{E}
                     \left (
          \| u(s) \|^4
          \right ) ds .
        \ee
      By \eqref{etac} and
      \eqref{est5 p6a} we get
         for all $t\ge \tau$,
         $$
     \E \left (  \| u   (t,\tau, u_\tau ) \|^4
     \right )
          \le
         \mathbb{E} \left (e^{2\eta (\tau-t) } \| u_ { \tau} \|^4
         \right )
  + 12 \eta^{-1}
       \int_ { \tau} ^ {t }
      e^{ 2 \eta  (s-t)}
      \| \phi_1 (s)\|^2_{L^1
      (\R^n)}     ds
   $$
     \be\label{est5 p7}
       +
       {\frac 32} \eta^{-3}      \int_ { \tau}^ {t }
      e^{2 \eta  (s-t) }
     \| \phi_g (s)\|^4  ds
             +
                108 \eta^{-1}   \int_ { \tau}^ {t }
      e^{2\eta  (s-t)}
       \| \theta (s) \|^4_{L^2(\R^n, l^2)} ds
       + 1728\eta^{-2} \|\kappa\|^4 \| \beta \|^4_{l^2} .
   \ee

        Replacing $\tau$ and $t$
   in \eqref{est5 p7} by $\tau -t$
   and  $\tau$, respectively ,
         we get
            for all $t\ge 0$,
         $$
     \E \left (  \| u   (\tau,\tau-t, u_{\tau -t} ) \|^4
     \right )
          \le
       e^{-2\eta t   }  \mathbb{E} \left (
          \| u_ { \tau -t} \|^4
         \right )
  + 12 \eta^{-1}
       \int_ { \tau -t} ^ {\tau }
      e^{ 2 \eta  (s-\tau)}
      \| \phi_1 (s)\|^2_{L^1
      (\R^n)}     ds
   $$
  $$
       +
       {\frac 32} \eta^{-3}       \int_ { \tau -t} ^ {\tau }
      e^{2 \eta  (s-\tau) }
     \| \phi_g (s)\|^4  ds
             +
                108 \eta^{-1}    \int_ { \tau -t} ^ {\tau }
      e^{2\eta  (s-\tau)}
       \| \theta (s) \|^4_{L^2(\R^n, l^2)} ds
       $$
          \be\label{est5 p8}
       + 1728\eta^{-2} \|\kappa\|^4 \| \beta \|^4_{l^2} .
   \ee

   Since
  $ \mathcal{L}_{ u_{\tau-t}  }  \in   D ( \tau-t )$
  and $D\in \mathcal{D}$, we have
 $$
\lim_{t \to \infty}
 e^{-2 \eta t  }
         \mathbb{E} \left ( \| u_ { \tau -t} \|^4
         \right )
           \le  e^{-2 \eta  \tau }
          \lim_{t \to \infty}
 e^{2 \eta (\tau-t) }\| D(\tau -t)\|^4_{\mathcal{P}_4 ( H ) } =0,
         $$
       and hence
  there exists $T=T(\tau, D ) >0 $ such that for all $t\ge T$,
  $$
 e^{-2 \eta t  }
         \mathbb{E} \left ( \| u_ { \tau -t} \|^4
         \right )      \le 1,
         $$
         which along with
         \eqref{est5 p8}
         implies that
         for all $t\ge T$,
         $$
     \E \left (  \| u   (\tau,\tau-t, u_{\tau -t} ) \|^4
     \right )
          \le
      1+
6\eta^{-2}
      \| \phi_1  \|^2_{L^\infty(\R, L^1
      (\R^n)) }
       + 1728\eta^{-2} \|\kappa\|^4 \| \beta \|^4_{l^2}
   $$
  $$
       +
       {\frac 32} \eta^{-3}       \int_ { -\infty } ^ {\tau }
      e^{2 \eta  (s-\tau) }
     \| \phi_g (s)\|^4  ds
             +
                108 \eta^{-1}    \int_ { -\infty } ^ {\tau }
      e^{2\eta  (s-\tau)}
       \| \theta (s) \|^4_{L^2(\R^n, l^2)} ds,
       $$
       which  completes the proof.
    \end{proof}

\section{Existence of Pullback Measure Attractors}

In this section, we
prove  the existence and uniqueness of $\mathcal{D}$-pullback measure attractors
of \eqref{sde3}-\eqref{sde4}
in $ \mathcal{P}_4 ( H ) $. To that end,
we first define a non-autonomous dynamical system
in
$ \mathcal{P}_4 ( H ) $.

Given $   \tau, t \in \R$
with $t\ge \tau$, define
$P^*_{\tau, t}: {\mathcal{P}}_4 (H)
\to {\mathcal{P}}_4 (H)$ by, for every
$\mu\in {\mathcal{P}}_4 (H)$,
\be\label{md1}
P^*_{\tau, t} \mu
={\mathcal{L}}_{u(t, \tau, u_\tau)},
\ee
where
$u(t, \tau, u_\tau)$
is the solution of \eqref{sde3}-\eqref{sde4}
with
$u_\tau\in L^4(\Omega,
\mathcal{F}_\tau,
H)$
such that  ${\mathcal{L}}_{u_\tau}
=\mu$.
In terms of \eqref{md1},
for every $t\in \R^+$
and $\tau \in \R$, define
$\Phi (t, \tau): {\mathcal{P}}_4 (H)
\to {\mathcal{P}}_4 (H)$ by, for every
$\mu\in {\mathcal{P}}_4 (H)$,
\be\label{md2}
 \Phi (t,\tau) \mu=
 P^*_{\tau,  \tau +t} \mu.
\ee
By the uniqueness of solutions
of \eqref{sde3}-\eqref{sde4}, we have,
for all $t,s\in \R^+$, $\tau \in \R$
and $u_\tau \in L^4(\Omega, \mathcal{F}_\tau,
H)$,
$$
u(t+s+\tau, \tau, u_\tau)
= u(t+s+\tau, s+\tau, u(s+\tau, \tau, u_\tau) ),
$$
and hence
for
 all $t,s\in \R^+$, $\tau \in \R$
and $\mu \in  {\mathcal{P}}_4(H)$,
$$
\Phi (t+s, \tau)\mu =
\Phi (t, s+\tau) ( \Phi(s, \tau) \mu).
$$

To prove $\Phi$ is a non-autonomous
dynamical system on
$({\mathcal{P}}_4(H), d_{{\mathcal{P}}(H) })$, it remains
to show $\Phi$ is weakly continuous
over bounded subsets of ${\mathcal{P}}_4(H)$,
which is given below.

\begin{lem}\label{ma1}
Suppose  {\bf  (H1)}-{\bf (H4)}
hold.  Let
$u_0, u_{0,n}  \in
  L^4(\Omega, {\mathcal{F}}_\tau,
H)$ such that
$ \E \left (\|u_0\|^4 \right )
\le R$ and
$ \E \left (\|u_{0,n} \|^4 \right )
\le R$
  for  some $R>0$.
If $\mathcal{L}_{u_{0,n}}
 \to \mathcal{L}_{u_0}$   weakly,   then
 for every $ \tau \in \R $
 and $t\ge \tau$,
 $\mathcal{L}_{u(t,\tau, u_{0,n})}
 \to \mathcal{L}_{u(t,\tau, u_0) }$ weakly.
 \end{lem}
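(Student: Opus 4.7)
The plan is to combine the $L^4$-boundedness of the initial data with uniform integrability (Vitali) to upgrade the weak convergence of the initial laws to convergence in $\mathbb{W}_2$, reduce the problem to an energy estimate via an optimal coupling, and close by Gr\"onwall using the monotonicity of $f$ together with the Wasserstein-Lipschitz properties of $f$, $g$ and $\sigma$.

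First, I would observe that the uniform bound $\E(\|u_{0,n}\|^4)\le R$ renders $\{\|\cdot\|^2\}$ uniformly integrable with respect to $\{\mathcal{L}_{u_{0,n}}\}$, since $\|x\|^2\mathbf 1_{\{\|x\|>K\}}\le\|x\|^4/K^2$. Combined with the hypothesis $\mathcal{L}_{u_{0,n}}\to\mathcal{L}_{u_0}$ weakly, the Vitali theorem (equivalently, the characterization of $\mathbb{W}_2$-convergence by weak convergence plus convergence of second moments) yields $\mathbb{W}_2(\mathcal{L}_{u_{0,n}},\mathcal{L}_{u_0})\to 0$. Because the law of the solution of \eqref{sde3}--\eqref{sde4} depends only on the law of the initial datum (by strong uniqueness), I can work on a possibly enlarged filtered probability space carrying a Wiener process and an optimal $\mathbb{W}_2$-coupling $(\hat u_{0,n},\hat u_0)$ with $\mathcal{L}_{\hat u_{0,n}}=\mathcal{L}_{u_{0,n}}$, $\mathcal{L}_{\hat u_0}=\mathcal{L}_{u_0}$ and $\E\|\hat u_{0,n}-\hat u_0\|^2=\mathbb{W}_2^2(\mathcal{L}_{u_{0,n}},\mathcal{L}_{u_0})\to 0$. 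Let $\hat u_n(t)$ and $\hat u(t)$ be the corresponding solutions; their laws coincide with those of $u(t,\tau,u_{0,n})$ and $u(t,\tau,u_0)$ respectively.

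Next, I would apply It\^o's formula to $\|\hat u_n(t)-\hat u(t)\|^2$, localizing by stopping times $\tau_m=\inf\{s\ge\tau:\|\hat u_n(s)\|+\|\hat u(s)\|>m\}$ exactly as in Lemma \ref{est1} so that the stochastic integral vanishes in expectation. The decisive algebraic inputs are: the one-sided bound \eqref{f4} together with the Wasserstein-Lipschitz part of \eqref{f3} splits $f(\cdot,\hat u_n,\mathcal L_{\hat u_n})-f(\cdot,\hat u,\mathcal L_{\hat u})$ into a monotone-in-$u$ piece and a piece controlled by $\phi_3\mathbb{W}_2(\mathcal L_{\hat u_n},\mathcal L_{\hat u})$, producing, after Young's inequality,
\[
 -2\bigl(f(\cdot,\hat u_n,\mathcal L_{\hat u_n})-f(\cdot,\hat u,\mathcal L_{\hat u}),\hat u_n-\hat u\bigr)
 \le C\|\hat u_n-\hat u\|^2+C\,\mathbb{W}_2^2(\mathcal L_{\hat u_n},\mathcal L_{\hat u});
\]
\eqref{g2} gives the analogous bound for the $g$-pairing, and \eqref{s5} does so for $\|\sigma(\hat u_n,\mathcal L_{\hat u_n})-\sigma(\hat u,\mathcal L_{\hat u})\|_{L_2(l^2,H)}^2$. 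Since the pair $(\hat u_n(s),\hat u(s))$ is itself a coupling of $\mathcal L_{\hat u_n(s)}$ and $\mathcal L_{\hat u(s)}$, we have $\mathbb{W}_2^2(\mathcal L_{\hat u_n(s)},\mathcal L_{\hat u(s)})\le\E\|\hat u_n(s)-\hat u(s)\|^2$. Taking expectations and letting $m\to\infty$ by Fatou, one arrives at
\[
 \E\|\hat u_n(t)-\hat u(t)\|^2
 \le \E\|\hat u_{0,n}-\hat u_0\|^2+C\int_\tau^t\E\|\hat u_n(s)-\hat u(s)\|^2\,ds,
\]
and Gr\"onwall yields $\E\|\hat u_n(t)-\hat u(t)\|^2\le e^{C(t-\tau)}\mathbb{W}_2^2(\mathcal L_{u_{0,n}},\mathcal L_{u_0})\to 0$. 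Hence $\hat u_n(t)\to\hat u(t)$ in probability, which implies $\mathcal L_{u(t,\tau,u_{0,n})}=\mathcal L_{\hat u_n(t)}\to\mathcal L_{\hat u(t)}=\mathcal L_{u(t,\tau,u_0)}$ weakly, as required.

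The main obstacle will be controlling the $f$-pairing despite the superlinear growth $|u|^{p-2}|u_1-u_2|$ in \eqref{f3}: direct Lipschitz control is unavailable, and the naive attempt to bound $(f_n-f,\hat u_n-\hat u)$ by \eqref{f3} would produce terms of order $(|\hat u_n|^{p-2}+|\hat u|^{p-2})|\hat u_n-\hat u|^2$ whose expectation one cannot absorb. This is precisely why the one-sided bound \eqref{f4} must be invoked for the state-dependent difference, while \eqref{f3} is used only for the distribution-dependent part; the two together turn the pairing into a genuine Gr\"onwall term plus a $\mathbb{W}_2^2$-remainder. A secondary technical point is the justification of It\^o's formula at the level of the difference $\hat u_n-\hat u$, which sits in the variational triple $(H^1(\R^n)\cap L^p(\R^n), H, (H^1(\R^n)\cap L^p(\R^n))^\ast)$ by virtue of the regularity \eqref{apri_est}.
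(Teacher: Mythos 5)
Your proof is correct and follows essentially the same route as the paper: both arguments reduce the statement to the stability estimate $\E\|v_n(t)\|^2\le e^{C(t-\tau)}\E\|v_n(\tau)\|^2$ obtained from It\^o's formula, the one-sided bound \eqref{f4} for the state-dependent part of $f$, the $\mathbb{W}_2$-Lipschitz bounds \eqref{f3}, \eqref{g2}, \eqref{s5} for the measure-dependent parts, and Gr\"onwall. The only (cosmetic) difference is how the $L^2(\Omega,H)$-convergent coupling of the initial data is produced: the paper uses the Skorokhod representation to get almost sure convergence and then Vitali's theorem with the uniform $L^4$ bound, whereas you use the same $L^4$ bound to upgrade weak convergence to $\mathbb{W}_2$-convergence and then take an optimal coupling — two standard packagings of the same fact.
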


\begin{proof}
Since $\mathcal{L}_{u_{0,n}}
 \to \mathcal{L}_{u_0}$ weakly,
 by the Skorokhov theorem,
 there exist a probability space
 $(\widetilde{\Omega}, \widetilde{\mathcal{F}},
 \widetilde{\mathbb{P}} )$
 and random variables
 $ \widetilde{u}_0 $
 and
 $ \widetilde{u}_{0,n} $
 defined in
 $(\widetilde{\Omega}, \widetilde{\mathcal{F}},
 \widetilde{\mathbb{P}} )$
 such that
 the distributions of
 $ \widetilde{u}_0 $
 and
 $ \widetilde{u}_{0,n} $
 coincide with that
 of $ {u}_0 $
 and
 $ {u}_{0,n} $, respectively.
 Furthermore,
 $ \widetilde{u}_{0,n}
 \to  \widetilde{u}_0 $
 $\widetilde{\mathbb{P}}$-almost surely.
Note that
 $  \widetilde{u}_{0} ,
 \widetilde{u}_{0,n} $
 and $W$
 can be considered as
 random variables defined
 in the  product space
 $(\Omega \times \widetilde{\Omega},
 \mathcal{F} \times \widetilde{\mathbb{F}},
  {\mathbb{P}} \times \widetilde{\mathbb{P}})$.
  So we may consider the
   solutions of
  the stochastic equation
  in  the product space with initial data
  $  \widetilde{u}_{0}$ and
 $\widetilde{u}_{0,n} $, instead of the solutions
 in  $(\Omega  ,
 \mathcal{F}  ,
  {\mathbb{P}}  )$
  with initial data
  $   {u}_{0}$ and
 $ {u}_{0,n} $.
 However, for simplicity,
 we will not distinguish
 the new random variables from the original
 ones, and just consider the solutions of the equation
 in the original space.
  Since
  $\widetilde{u}_{0,n}
  \to \widetilde{u}_{0 } $
  $({\mathbb{P}} \times \widetilde{\mathbb{P}})$-almost
  surely,
  without loss of generality,
  we   simply
  assume that
  $ {u}_{0,n}
  \to {u}_{0 } $
  ${\mathbb{P}}$-almost
  surely.

 Let
  $ u_n (t,\tau) = u  ( t,\tau,  u_{0,n} ) $,
   $ u (t,\tau) = u  ( t,\tau,  u_{0} ) $ and
  $ v_n (t,\tau) = u  ( t,\tau,  u_{0,n} )
 - u  ( t,\tau, u_0 ) $.
 Then by \eqref{sde3}-\eqref{sde4} we have,
  for all $ t  \ge \tau $,
$$
dv_n (t) -  \Delta v_n (t) dt  +  \lambda  v_n (t) dt
+
 f  \left( t,   u_n (t),
 \mathcal{L}_{u_n (t) }  \right)
         -  f \left( t,   u ( t )    \mathcal{L}_{u  (t) }  \right)
     dt
$$
$$
=
g \left( t,   u_n (t),
 \mathcal{L}_{u_n (t) }  \right)
         - g \left( t,   u ( t )    \mathcal{L}_{u  (t) }  \right)
     dt
     +
     \left (
      \sigma   ( t,  u_n( t ) , \mathcal{L}_{u_n ( t )}  )
      -
      \sigma   ( t,  u ( t ) , \mathcal{L}_{u  ( t )}  )
      \right )
         dW(t).
         $$

By It\^{o}'s formula, we have for all $ t \ge \tau $,
$$
 \| v_n ( t )  \|^2
+ 2 \int_\tau^t  \| \nabla v_n (s)  \|^2  ds
  +  2 \lambda   \int_\tau^t   \|  v_n (s) \|^2  ds
  $$
  $$
  +2
     \int_\tau^t
        \int_{\R^n}
        \left (
         f  \left( s,   u_n ( s ) ,  \mathcal{L}_{u_n ( s )}  \right)
           -   f  \left( s,   u ( s ) ,  \mathcal{L}_{u ( s )}  \right)
               \right)
             v_n(s) dx
          ds
$$
$$
 =  \| u_{0,n} -u_0 \|^2
 +2
     \int_\tau^t
        \int_{\R^n}
        \left (
        g \left( s,   u_n ( s ) ,  \mathcal{L}_{u_n ( s )}  \right)
           -   g \left( s,   u ( s ) ,  \mathcal{L}_{u ( s )}  \right)
               \right)
             v_n(s) dx
          ds
          $$
          $$
 +
  \int_\tau^t
        \|  \sigma   ( s,  u_n ( s ) , \mathcal{L}_{u_n ( s )}  )
           - \sigma  ( s, u ( s ) ,
           \mathcal{L}_{u  ( s )}   )
         \|_{ L_2 ( \ell^2,  H ) }  ^2
     ds
$$
 \be \label{ma1 p1}
  + 2 \int_\tau^t
        \left ( v_n (s),
         \left ( \sigma   ( s ,
         u_n ( s ) , \mathcal{L}_{u_n ( s )}   )
           - \sigma  ( s, u ( s ) ,
           \mathcal{L}_{u  ( s )}    )
           \right )  dW \right ).
  \ee
  By \eqref{f3} and \eqref{f4} we  have
     $$
  -2
     \int_\tau^t
        \int_{\R^n}
        \left (
         f  \left( s,   u_n ( s ) ,  \mathcal{L}_{u_n ( s )}  \right)
           -   f  \left( s,   u ( s ) ,  \mathcal{L}_{u ( s )}  \right)
               \right)
             v_n (s) dx
          ds
$$
  $$
  =
  -2
     \int_\tau^t
        \int_{\R^n}
        \left (
         f  \left( s,   u_n ( s ) ,  \mathcal{L}_{u_n ( s )}  \right)
           -   f  \left( s,   u ( s ) ,  \mathcal{L}_{u_n ( s )}  \right)
               \right)
             v_n(s) dx
          ds
$$
 $$
  -2
     \int_\tau^t
        \int_{\R^n}
        \left (
         f  \left( s,   u ( s ) ,  \mathcal{L}_{u_n ( s )}  \right)
           -   f  \left( s,   u ( s ) ,  \mathcal{L}_{u ( s )}  \right)
               \right)
             v_n(s) dx
          ds
$$
 \be\label{ma1 p2}
  \le
  \int_\tau ^t
  \left (
  (2 \| \phi_4 (s)\|_{L^\infty (\R^n)}
  + \|\phi_3 (s)\|_{L^\infty (\R^n)}
  ) \| v_n (s) \|^2
  +\E \left (
  \| v_n (s) \|^2 \right )
  \| \phi_3 (s) \|_{L^1(\R^n)}
  \right )
  ds.
\ee
By \eqref{g2} we have
$$2
     \int_\tau^t
        \int_{\R^n}
        \left (
        g \left( s,   u_n ( s ) ,  \mathcal{L}_{u_n ( s )}  \right)
           -   g \left( s,   u ( s ) ,  \mathcal{L}_{u ( s )}  \right)
               \right)
             v(s) dx
          ds
          $$
 \be\label{ma1 p3}
  \le
  \int_\tau^t
  \left (
  3\| \phi_7 (s) \|_{L^\infty(\R^n)} \| v_n (s) \|^2
  +
      \E \left (
  \| v_n (s) \|^2 \right )
  \| \phi_7 (s) \|_{L^1(\R^n)}
  \right )
  ds.
\ee
By \eqref{s5} we have
    $$
   \int_\tau^t
        \|  \sigma   ( s,  u_n ( s ) , \mathcal{L}_{u_n ( s )}  )
           - \sigma  ( s, u ( s ) ,
           \mathcal{L}_{u  ( s )}   )
         \|_{ L_2 ( \ell^2,  H ) }  ^2
     ds
$$
\be\label{ma1 p4}
\le
2\|L_\sigma \|^2_{l^2}\int_\tau^t
\left (
\|\kappa\|^2_{L^\infty (\R^n)}
\|v_n (s) \|^2
+ \| \kappa\|^2
  \E \left (
  \| v_n (s) \|^2 \right )
\right )ds.
\ee

It follows from
\eqref{ma1 p1}-\eqref{ma1 p4}
that
for all $ t \ge \tau $,
$$
 \E \left (  \| v_n ( t )  \|^2
 \right )
 \le
  \E \left (\| u_{0,1} -u_{0,2} \|^2
  \right )
  $$
  $$
  +
  \left (
  2\|\phi_4 \|_{L^\infty(\R, L^\infty(\R^n))}
  +\|\phi_3 \|_{L^\infty(\R, L^\infty(\R^n)\cap
  L^1(\R^n)  )}
  +3 \|\phi_7 \|_{L^\infty(\R, L^\infty(\R^n)
  \cap L^1(\R^n) )}
  \right ) \int_\tau^t
   \E \left (
  \| v_n (s) \|^2 \right )
 ds
          $$
\be\label{ma1 p5}
          +
           2\|L_\sigma \|^2_{l^2}
  \left (\|\kappa\|^2_{L^\infty (\R^n)}
  + \| \kappa \|^2
  \right )\int_\tau^t
   \E \left (
  \| v_n (s) \|^2 \right ) ds.
  \ee
        By \eqref{ma1 p5}
        and  Gronwall's  lemma, we obtain,
        for  all $t\ge \tau$,
         \be\label{ma1 p6}
 \E \left ( \| v_n ( t, \tau )  \|^2
 \right )
 \le
  \E \left ( \| u_{0,n} -u_0 \|^2
  \right ) e^{c_1 (t-\tau)},
\ee
where $c_1>0$ is a constant independent of
$n, \tau$  and $t$.

Since $\E \left ( \|u_{0,n}\|^4 \right )
\le R$, we see that
the sequence
$\{ u_{0,n} \}_{n=1}^\infty$
is uniformly integrable in $L^2(\Omega, H)$.
Then  using the  assumption that
$u_{0,n} \to u_0$ $P$-almost surely,
we obtain from Vitali's theorem that
 $  u_{0,n}  \to u_0 $ in   $L^2(\Omega, H)$,
 which along with
  \eqref{ma1 p6}
  shows that
  $u(t,\tau, u_{0,n})
  \to u(t,\tau, u_0)$
  in $L^2(\Omega, H)$ and hence
  also in distribution.
\end{proof}

By Lemma \ref{ma1}, we find that the mapping
$\Phi$ given by \eqref{md2}
is a non-autonomous dynamical system
in $(\mathcal{P}_4 (H), d_{\mathcal{P} (H)})$
which is continuous over
bounded subsets of
$(\mathcal{P}_4 (H)$.
Next, we prove the existence
of $\mathcal{D}$-pullback absorbing sets
of $\Phi$.

\begin{lem}\label{ma2}
If  {\bf  (H1)}-{\bf (H4)},
\eqref{lamc} and \eqref{phi_ga}
hold, then $\Phi$
   has a closed
$ \mathcal{D}$-pullback absorbing set
$ K  = \{    K  ( \tau ): \tau \in \mathbb{R}    \}
\in \mathcal{D} $
which is given by, for each $\tau\in\mathbb{R}$,
\be \label{ma2 1}
K  ( \tau ) =
\left \{
\mu \in {\mathcal{P}}_4 (H):
\int_H \| \xi\|^4 \mu (d \xi)
\le M_4 (\tau)
\right \},
\ee
where
 $$
 M_4(\tau)
 =   M_3
     + M_3  \int_{ -\infty }^\tau
     e^{  2\eta  ( s - \tau ) }
  \left (
     \| \phi_g (s)\|^4     +
       \| \theta (s) \|^4_{L^2(\R^n, l^2)}
       \right )  ds,
$$
and
  $\eta>0$ is the same number as in \eqref{etac},
$ M_3>0 $  is  the same   constant
as in Lemma \ref{est5}
which is  independent of $ \tau $.
\end{lem}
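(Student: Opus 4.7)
The plan is to verify three conditions for the family $K = \{K(\tau) : \tau \in \R\}$ given by \eqref{ma2 1}: that $K$ is $\mathcal{D}$-pullback absorbing for $\Phi$, that each $K(\tau)$ is closed in $(\mathcal{P}(H), d_{\mathcal{P}(H)})$, and that $K$ itself lies in $\mathcal{D}$.

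First, I would read the pullback absorbing property directly off Lemma \ref{est5}. Given $D \in \mathcal{D}$ and $\tau \in \R$, take any $\mu \in D(\tau-t)$ and realize it as $\mathcal{L}_{u_{\tau-t}}$ for some $u_{\tau-t} \in L^4(\Omega, \mathcal{F}_{\tau-t}, H)$. By \eqref{md1}--\eqref{md2},
\[
\Phi(t, \tau-t)\mu = P^*_{\tau-t, \tau}\mu = \mathcal{L}_{u(\tau, \tau-t, u_{\tau-t})},
\]
and Lemma \ref{est5} supplies $T = T(\tau, D) > 0$ such that for all $t \ge T$,
\[
\int_H \|\xi\|^4 \, d\mathcal{L}_{u(\tau, \tau-t, u_{\tau-t})}(\xi) = \E\bigl(\|u(\tau, \tau-t, u_{\tau-t})\|^4\bigr) \le M_4(\tau),
\]
which says exactly that $\Phi(t, \tau-t)\mu \in K(\tau)$.

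Second, for closedness of $K(\tau)$ in $d_{\mathcal{P}(H)}$, the key observation is that although $\xi \mapsto \|\xi\|^4$ is unbounded, each truncation $\xi \mapsto \|\xi\|^4 \wedge M$ is bounded and continuous on $H$, so the functional $\mu \mapsto \int_H (\|\xi\|^4 \wedge M)\, d\mu$ is weakly continuous in $\mu$. By monotone convergence, $\mu \mapsto \int_H \|\xi\|^4 \, d\mu$ is the pointwise supremum in $M$ of these weakly continuous functionals, hence weakly lower semicontinuous; consequently its sublevel set $K(\tau)$ is closed.

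Third, for $K \in \mathcal{D}$ I need $\lim_{\tau \to -\infty} e^{2\eta\tau} \|K(\tau)\|^4_{\mathcal{P}_4(H)} = 0$. Since $\|K(\tau)\|^4_{\mathcal{P}_4(H)} \le M_4(\tau)$, the task reduces to checking
\[
e^{2\eta\tau} M_3 + M_3 \int_{-\infty}^\tau e^{2\eta s}\bigl(\|\phi_g(s)\|^4 + \|\theta(s)\|^4_{L^2(\R^n, l^2)}\bigr) \, ds \longrightarrow 0,
\]
which holds because the first summand vanishes trivially and the second is a tail integral of a function integrable by \eqref{phi_ga}. The least routine step is the closedness argument, since the fourth-moment functional is not weakly continuous; the truncation--monotone-convergence reduction above is the standard remedy. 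The remaining two points are direct bookkeeping from Lemma \ref{est5} and hypothesis \eqref{phi_ga}.
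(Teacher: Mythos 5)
Your proposal is correct and follows essentially the same route as the paper: absorption is read directly off Lemma \ref{est5} via \eqref{md1}--\eqref{md2}, and membership of $K$ in $\mathcal{D}$ is the same tail-integral computation using \eqref{phi_ga}. The only difference is that you supply an explicit truncation/monotone-convergence argument for the weak lower semicontinuity of the fourth-moment functional, whereas the paper simply invokes the fact (recorded in its preliminaries) that balls $B_{\mathcal{P}_p(X)}(r)$ are closed in $(\mathcal{P}(X),d_{\mathcal{P}(X)})$; your argument is a valid proof of that fact.
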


\begin{proof}
First, note that $K(\tau)$ is a closed subset of
$\mathcal{P}_4 (H)$. On the other hand,
for every  $ \tau \in \mathbb{R} $
 and $ D \in \mathcal{D} $,
 by   \eqref{md2}
and Lemma \ref{est5}, we infer that
there exists $ T = T ( \tau, D ) >0$
such that for all $ t \ge T $,
\be\label{ma2 p1}
\Phi ( t, \tau - t )
D ( \tau - t )
\subseteq
K  ( \tau ).
\ee
Finally, by \eqref{phi_ga}
and \eqref{ma2 1} we have,
as $\tau \to -\infty$,
$$
  e^{ 2\eta  \tau }
     \| K  ( \tau )  \|_{   \mathcal{P}_4  (H )   }^4
  =
  e^{ 2\eta  \tau } M_3
     + M_3
       \int_{ -\infty }^\tau    e^{  2\eta s   }
       \left (
     \| \phi_g (s)\|^4     +
       \| \theta (s) \|^4_{L^2(\R^n, l^2)}
       \right )  ds
     \to 0,
       $$
      which shows that $K=\{K(\tau):
       \tau \in \R\} \in \mathcal{D}$.
    Therefore,  by \eqref{ma2 p1} we find that
    $K$ as given by \eqref{ma2 1} is
    a closed
    $\mathcal{D}$-pullback absorbing set
    for $\Phi$.
\end{proof}

Next, we establish the
$\mathcal{D}$-pullback asymptotic compactness of
$\Phi$.

\begin{lem}
\label{ma3}
If  {\bf  (H1)}-{\bf (H4)},
\eqref{lamc} and \eqref{phi_g}-\eqref{phi_ga}
hold, then $\Phi$
 is $\mathcal{D}$-pullback asymptotically compact in
  $  (\mathcal{P}_4 ( H), d_{\mathcal{P} (H)}) $.
\end{lem}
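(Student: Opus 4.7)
The plan is to reduce $\mathcal D$-pullback asymptotic compactness to weak tightness plus a fourth-moment control, and then to build an explicit compact set in $H$ exploiting the estimates from Lemmas \ref{est3}, \ref{est4} and \ref{est5}. Fix $\tau\in\R$, $D\in\mathcal D$, a sequence $t_n\to\infty$, and $\mu_n\in D(\tau-t_n)$. Choose $u_{0,n}\in L^4(\Omega,\mathcal F_{\tau-t_n},H)$ with $\mathcal L_{u_{0,n}}=\mu_n$ and set $v_n=u(\tau,\tau-t_n,u_{0,n})$, so that $\Phi(t_n,\tau-t_n)\mu_n=\mathcal L_{v_n}$. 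Since convergence in $d_{\mathcal P(H)}$ coincides with weak convergence of probability measures on $H$, it suffices to show that, after passing to a subsequence, $\mathcal L_{v_n}$ converges weakly to some $\mu\in\mathcal P_4(H)$.

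To establish tightness, I will construct for arbitrary $\epsilon>0$ a relatively compact set $K_\epsilon\subset H$ with $\mathcal L_{v_n}(K_\epsilon^c)<\epsilon$ for all sufficiently large $n$. Because $\mathcal D\subseteq\mathcal D_0$, Lemma \ref{est3} combined with Markov's inequality supplies $M>0$ and $T_0\ge 1$ such that $\mathbb P(\|v_n\|_{H^1(\R^n)}>M)<\epsilon/2$ whenever $t_n\ge T_0$; for each $k\in\N$, Lemma \ref{est4} likewise yields an integer $N_k$ and a threshold $T_k\ge 1$ with $\mathbb P\bigl(\int_{|x|\ge N_k}|v_n|^2\,dx>4^{-k}\bigr)<\epsilon\,2^{-k-1}$ for $t_n\ge T_k$. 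I then take
$$
K_\epsilon=\Bigl\{u\in H:\ \|u\|_{H^1(\R^n)}\le M,\ \int_{|x|\ge N_k}|u|^2\,dx\le 4^{-k}\ \text{for all }k\ge 1\Bigr\}.
$$
A union bound gives $\mathbb P(v_n\notin K_\epsilon)<\epsilon$ for $n$ large, and $K_\epsilon$ is totally bounded in $H$: given $\delta>0$, pick $k$ with $2\cdot 4^{-k}<\delta^2$, observe that $\{u|_{B_{N_k}}:u\in K_\epsilon\}$ is bounded in $H^1(B_{N_k})$ hence relatively compact in $L^2(B_{N_k})$ by Rellich--Kondrachov, and extend a finite $\delta/2$-net on $B_{N_k}$ by zero to obtain a finite $\delta$-net for $K_\epsilon$ in $L^2(\R^n)$.

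By Prokhorov's theorem a subsequence $\mathcal L_{v_{n_j}}$ converges weakly to some $\mu\in\mathcal P(H)$. Lemma \ref{est5} furnishes a uniform bound $\sup_j\E\|v_{n_j}\|^4<\infty$, and since $u\mapsto\|u\|^4$ is non-negative and lower semicontinuous on $H$, the Portmanteau theorem gives
$$
\int_H\|u\|^4\,d\mu(u)\le\liminf_{j\to\infty}\E\|v_{n_j}\|^4<\infty,
$$
so $\mu\in\mathcal P_4(H)$ and the convergence takes place in $(\mathcal P_4(H),d_{\mathcal P(H)})$, which is precisely the $\mathcal D$-pullback asymptotic compactness required.

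The main obstacle is exactly what the authors highlight in the introduction: Sobolev embeddings are not compact on $\R^n$, so neither the $H^1$-estimate nor the tail-ends estimate is sufficient on its own. Interweaving them through the construction of $K_\epsilon$ above, and arguing that the resulting set is totally bounded in $L^2(\R^n)$ via a diagonal Rellich--Kondrachov argument, is the key technical step; everything else (Prokhorov's theorem, lower semicontinuity of the fourth moment) is standard.
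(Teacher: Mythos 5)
Your overall strategy --- combine the $H^1$ bound of Lemma \ref{est3} with the tail-ends estimate of Lemma \ref{est4} to obtain tightness of the laws, then use the fourth-moment bound of Lemma \ref{est5} and lower semicontinuity of $u\mapsto\|u\|^4$ to place the weak limit in $\mathcal P_4(H)$ --- is the right one, and the final step is fine (and slightly cleaner than the paper's, which instead invokes closedness of the absorbing set $K(\tau)$). However, the construction of the compact set $K_\epsilon$ has a genuine gap. The set $K_\epsilon$ imposes the tail condition $\int_{|x|\ge N_k}|u|^2\,dx\le 4^{-k}$ for \emph{every} $k\ge 1$, so your union bound needs $\mathbb P\bigl(\int_{|x|\ge N_k}|v_n|^2\,dx>4^{-k}\bigr)<\epsilon\,2^{-k-1}$ to hold for all $k$ simultaneously at a single fixed $n$. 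Lemma \ref{est4} only provides this for $t_n\ge T_k=T(\delta_k,\tau,D)$, and these thresholds are not uniform in $k$: inspecting its proof, the bound on $\E\bigl(\int_{|x|\ge m}|v_n|^2\,dx\bigr)$ consists of an $m$-independent term $a_n:=e^{-\eta t_n}\,\E\|u_{0,n}\|^2$ plus a remainder $R_m\to 0$. For fixed $n$ the number $a_n$ is strictly positive, so $\sup_{n\ge n_0}\E\bigl(\int_{|x|\ge m}|v_n|^2\,dx\bigr)$ does not tend to $0$ as $m\to\infty$ for any fixed $n_0$, and Markov's inequality controls the $k$-th term of your union bound only by $4^{k}(a_n+R_{N_k})$, whose sum over $k$ diverges for every fixed $n$. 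No choice of radii $N_k$, no replacement of the levels $4^{-k}$ by any other sequence tending to $0$, and no threshold ``$n$ large'' repairs this; the set of $n$ for which your union bound closes may well be empty.

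The repair is essentially the paper's argument: use only \emph{one} truncation radius per $\epsilon$, and prove total boundedness of the family of laws rather than exhibiting a single compact set in $H$. Writing $v_n=\rho_{m_0}v_n+(1-\rho_{m_0})v_n$ with the cut-off $\rho_{m_0}$, Lemma \ref{est4} gives $\E\|\rho_{m_0}v_n\|^2<\tfrac19\epsilon^2$ for a single suitable $m_0$ and all large $n$, whence $d_{\mathcal P(H)}\bigl(\mathcal L_{v_n},\mathcal L_{(1-\rho_{m_0})v_n}\bigr)\le\E\|\rho_{m_0}v_n\|<\tfrac13\epsilon$; the truncated family is tight because, up to probability $\delta$, it lies in the compact set of $H$ consisting of functions bounded in $H^1(\R^n)$ and supported in $\{|x|\le m_0\}$. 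Hence $\{\mathcal L_{v_n}\}$ is totally bounded in the complete metric space $(\mathcal P(H),d_{\mathcal P(H)})$ and admits a weakly convergent subsequence, after which your identification of the limit as an element of $\mathcal P_4(H)$ goes through unchanged.
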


\begin{proof}
Given  $ \tau \in \mathbb{R} $, $ D  \in  \mathcal{D} $,
 $ t_n \to  \infty $
 and $\mu_n   \in D ( \tau - t_n )  $,
 we will prove
 the sequence
 $\{\Phi (t_n,
 \tau -t_n  )\mu_n \}_{n=1}^\infty$
 has a convergent
 subsequence  in
 $  (\mathcal{P}_4 ( H), d_{\mathcal{P} (H)}) $.

Given  $ v_n  \in L^4  ( \Omega,
\mathcal{F}_{ \tau - t_n }, H ) $
with $ \mathcal{L}_{ v_n } =\mu_n $,
we
consider the solution
$u(\tau, \tau-t_n, v_n)$ of
\eqref{sde3} with initial data
$v_n$ at initial time $\tau -t_n$.
  we first prove
  the distributions
  of
  $\{u(\tau, \tau-t_n, v_n)\}_{n=1}^\infty$
  are
     tight in $H$.

     Let $\rho: \R^n \to [0,1]$ be the smooth cut-off
     function given by \eqref{rho},
     and $\rho_m (x) = \rho \left ({\frac xm}
     \right )$
     for every $m\in \N$ and $x\in \R^n$.
     Then  the solution $u$ can be decomposed as:
     $$
     u(\tau, \tau-t_n, v_n)
     =
     \rho_m u(\tau, \tau-t_n, v_n)
     +(1-\rho_m) u(\tau, \tau-t_n, v_n).
     $$
     Note that there exists $c_1=c_1 (\rho)>0$ such that
     $$
     \sup_{x\in \R^n}
     |\nabla \rho (x)| \le c_1,
     $$
   and hence for all $m, n\in \N$,
   \be\label{ma3 p1}
   \|(1- \rho_m) u(\tau, \tau-t_n, v_n)\|^2_{H^1(\R^n)}
   \le c_2\| u(\tau, \tau-t_n, v_n)\|^2_{H^1(\R^n)},
   \ee
 where $c_2= 1+ c_1^2  $.

  By Lemma \ref{est3} we see that
  there exists $N_1=N_1 (\tau, D)\in \N$
  such that for all $n\ge N_1$,
\be\label{ma3 p2}
\mathbb{E}
\left (
 \|  u   ( \tau, \tau - t_n, v_n )
                   \|_{ H^1 (\R^n) } ^2
         \right )
\le  c_3  ,
\ee
 $ c_3=c_3  ( \tau )>0 $ is a constant depending
only  on $ \tau $, but not on $n$ or  $D$.
By \eqref{ma3 p1} and \eqref{ma3 p2} we have
for all $m\in \N$ and $n\ge N_1$,
 \be\label{ma3 p3}
  \E \left (
   \| (1-\rho_m ) u(\tau, \tau-t_n, v_n)\|^2_{H^1(\R^n)}
   \right )
   \le c_2c_3.
    \ee
 By \eqref{ma3 p3} and
 Chebyshev's inequality, we obtain that
  for all $m\in \N$  and
  $n\ge N_1$,
$$
 \mathbb{P}\left (
         \{ \| (1-\rho_m ) u   ( \tau, \tau - t_n, v_n   )
         \|_{H^1(\R^n)} >R
         \}
   \right )
   \le {\frac {c_2c_3}{R^2}}.
   $$
  Therefore,
   for
     every $\delta>0$,
   there exists
   $R(\delta, \tau)>0$ such that
   for all $m\in \N$ and $n\ge N_1$,
   \be\label{ma3 p4}
  \mathbb{P}\left (
         \{ \| (1-\rho_m)  u   ( \tau, \tau - t_n,  v_n   )
         \|_{H^1(\R^n)} >R(\delta, \tau)
         \}
   \right )
 <   \delta.
   \ee
   Let
   $$Z_\delta = \{
   u\in H^1(\R^n):  \| u \|_{H^1(
   \R^n)}
   \le R(\delta, \tau);
   \  \ u(x) =0 \ \text{for a.e. } |x|>m
   \}.$$
   Then $Z_\delta$ is a compact subset of $H$.
   By \eqref{ma3 p4} we have
    for all $m\in \N$ and $n\ge N_1$,
\be\label{ma3 p5}
 \mathbb{P}
 \left (
 \left \{
 (1-\rho_m)  u   ( \tau, \tau - t_n,  v_n   )
 \in Z_\delta
 \right \}
 \right ) > 1-\delta.
\ee
Since $\delta>0$ is arbitrary, by \eqref{ma3 p5}
we find that for every
$m\in \N$,
\be\label{ma3 p6}
\{
\mathcal{L}_{
(1-\rho_m)  u   ( \tau, \tau - t_n,  v_n   )}
\}_{n=1}^\infty
\ \text{is   tight in } H,
\ee
where
 $\mathcal{L}_{
(1-\rho_m)  u   ( \tau, \tau - t_n,  v_n   )}$
is the distribution of
$
(1-\rho_m)  u   ( \tau, \tau - t_n,  v_n   ) $ in $H$.

Next, we prove
$\{
\mathcal{L}_{
   u   ( \tau, \tau - t_n,  v_n   )}
\}_{n=1}^\infty
$ is   tight in  $ H$ by the uniform tail-ends estimates.
Indeed, by Lemma \ref{est4} we infer that
for every $\varepsilon>0$,
there exist
$N_2=N_2(\varepsilon, \tau, D)\in \N$ and
$m_0=m_0(\eps, \tau)\in \N$ such that
for all $n\ge N_2$,
\be\label{ma3 p10}
\E
\left (
\int_{|x|>{\frac 12} m_0} |u(\tau, \tau -t_n, v_n) (x)|^2 dx
\right )
<{\frac 19} \eps^2.
\ee
By \eqref{ma3 p10} we get,
for all $n\ge N_2$,
\be\label{ma3 p11}
\E
\left (
\| \rho_{m_0}  u(\tau, \tau -t_n, v_n)\|^2
\right )
<{\frac 19} \eps^2.
\ee
By \eqref{ma3 p6}, we  see that
$\{
\mathcal{L}_{
(1-\rho_{m_0})  u   ( \tau, \tau - t_n,  v_n   )}
\}_{n=N_2}^\infty$
 is   tight in  $H$, and hence
 there exist
 $n_1,\cdots , n_l \ge N_2 $ such that
 \be\label{ma3 p12}
 \{
\mathcal{L}_{
 (1-\rho_{m_0})  u   ( \tau, \tau - t_n,  v_n   )}
\}_{n=N_2}^\infty
\subseteq
\bigcup_{j=1}^{l}
 B\left (\mathcal{L}_{
 (1-\rho_{m_0})  u   ( \tau, \tau - t_{n_j},  v_{n_j}   )},
 \ {\frac 13} \eps
 \right ),
 \ee
where
$
B\left (\mathcal{L}_{
 (1-\rho_{m_0})  u   ( \tau, \tau - t_{n_j},  v_{n_j}   )},
 \ {\frac 13} \eps
 \right )
 $ is the
 ${\frac 13} \eps$-neighborhood
 of $ \mathcal{L}_{
 (1-\rho_{m_0})  u   ( \tau, \tau - t_{n_j},  v_{n_j}   )}$
 in the space
 $(\mathcal{P}_4 (H), d_{\mathcal{P} (H)})$.
  We claim:
  \be\label{ma3 p13}
 \{
\mathcal{L}_{    u   ( \tau, \tau - t_n,  v_n   )}
\}_{n=N_2}^\infty
\subseteq
\bigcup_{j=1}^{l}
 B\left (  \mathcal{L}_{
  u   ( \tau, \tau - t_{n_j},  v_{n_j}   )},
 \   \eps
 \right ).
 \ee

Given $n\ge N_2$ by
\eqref{ma3 p12} we know that
there exist $j\in \{1,\cdots, l\}$ such that
  \be\label{ma3 p14}
 \mathcal{L}_{
 (1-\rho_{m_0})  u   ( \tau, \tau - t_n,  v_n   )}
 \in
 B\left (\mathcal{L}_{
 (1-\rho_{m_0})  u   ( \tau, \tau - t_{n_j},  v_{n_j}   )},
 \ {\frac 13} \eps
 \right ).
 \ee
 By \eqref{ma3 p11} and \eqref{ma3 p14}
 we have
 $$
 d_{\mathcal{P}(H) }
 \left (
 \mathcal{L}_{   u   ( \tau, \tau - t_n,  v_n   )},
 \
 \mathcal{L}_{  u   ( \tau, \tau - t_{n_j},  v_{n_j}   )}
 \right )
 $$
 $$
 =
 \sup_{\psi \in L_b(H), \|\psi\|_{L_b}
 \le 1}
 \left |
 \int_H
 \psi   d \mathcal{L}_{   u   ( \tau, \tau - t_n,  v_n   )}
 -\int_H
 \psi  d\mathcal{L}_{  u   ( \tau, \tau - t_{n_j},  v_{n_j}   )}
 \right |
 $$
 $$
 =
 \sup_{\psi \in L_b(H), \|\psi\|_{L_b}
 \le 1}
 \left |
 \E \left (
 \psi    (  u   ( \tau, \tau - t_n,  v_n   ) )
 \right )
 -\E \left (
 \psi  (  u   ( \tau, \tau - t_{n_j},  v_{n_j}   ))
 \right )
 \right |
 $$
 $$
 \le
 \sup_{\psi \in L_b(H), \|\psi\|_{L_b}
 \le 1}
 \left |
 \E \left (
 \psi    (  u   ( \tau, \tau - t_n,  v_n   ) )
 \right )
 -\E \left (
 \psi  ( (1-\rho_{m_0} ) u   ( \tau, \tau - t_{n},  v_{n}   ))
 \right )
 \right |
 $$
 $$
+
 \sup_{\psi \in L_b(H), \|\psi\|_{L_b}
 \le 1}
 \left |
 \E \left (
 \psi    ( (1-\rho_ {m_0})  u   ( \tau, \tau - t_n,  v_n   ) )
 \right )
 -\E \left (
 \psi  ( (1-\rho_ {m_0}) u   ( \tau, \tau - t_{n_j},  v_{n_j}   ))
 \right )
 \right |
 $$
 $$
+
 \sup_{\psi \in L_b(H), \|\psi\|_{L_b}
 \le 1}
 \left |
 \E \left (
 \psi  ( (1-\rho_ {m_0}) u   ( \tau, \tau - t_{n_j},  v_{n_j}   ))
 \right )
 -
 \E \left (
 \psi  (   u   ( \tau, \tau - t_{n_j},  v_{n_j}   ))
 \right )
 \right |
 $$
 $$
 \le
  \E \left (
   \| \rho_ {m_0}  u   ( \tau, \tau - t_{n},  v_{n}   )\|
 \right )
 +\E \left (
   \| \rho_ {m_0}  u   ( \tau, \tau - t_{n_j},  v_{n_j}   )\|
 \right )
 $$
 $$
+
d_{\mathcal{P} (H)}
     \left (   \mathcal{L}_{
      (1-\rho_{m_0} )  u   ( \tau, \tau - t_n,  v_n   ) )}
, \  \mathcal{L}_{
(1-\rho_{m_0}
 ) u   ( \tau, \tau - t_{n_j},  v_{n_j}   )}
 \right )
 $$
 $$
 <{\frac 13} \eps
 + {\frac 13} \eps +
 {\frac 13} \eps =\eps,
 $$
  which yields \eqref{ma3 p13}.
   Since $\eps>0$ is arbitrary,
   by \eqref{ma3 p13} we see that
   the sequence
    $\{ \mathcal{L}_{
       u   ( \tau, \tau - t_n,  v_n   )  }
       \}_{n=1}^\infty$
 is tight in $ {\mathcal{P} (H)} $,
 which implies that
 there exists $\nu \in \mathcal{P} (H)$ such that,
 up to a subsequence,
 \be\label{ma3 p20}
 \mathcal{L}_{
       u   ( \tau, \tau - t_n,  v_n   )  }
 \to \nu \ \text{weakly}.
 \ee

 It remains to show $\nu \in \mathcal{P}_4 (H)$.
 Let $K=\{K(\tau): \tau \in \R \}$
 be the closed $\cald$-pullback
 absorbing set of $\Phi$ given by \eqref{ma2 1}.
 Then there exists
 $N_3=N_3 (\tau, D)\in \N$ such that for all
 $n\ge N_3$,
 \be\label{ma3 p21}
 \mathcal{L}_{  u(\tau, \tau -t_n, v_n)}
 \in  K(\tau).
\ee
 Since $K(\tau)$ is closed
 with respect to the weak topology
 of  $ \mathcal{P} (H)$,
 by \eqref{ma3 p20}-\eqref{ma3 p21}
 we obtain $\nu \in K(\tau)$ and thus
 $\nu \in \mathcal{P}_4 (H)$.
 This completes the proof.
\end{proof}

We now prove   the existence and uniqueness of $D$-pullback measure attractors of
$\Phi$.

\begin{thm}\label{main_e}
If  {\bf  (H1)}-{\bf (H4)},
\eqref{lamc} and \eqref{phi_g}-\eqref{phi_ga}
hold, then $\Phi$
    has a unique $\mathcal{D}$-pullback
    measure attractor
$\mathcal{A}  = \{  \mathcal{A} ( \tau )
: \tau \in \mathbb{R}   \}  \in \mathcal{D}$.

Moreover,
If $f, g,  \sigma$, $\theta$ and $\phi_g$
are  all periodic functions in time with period $T$,
then   so is  the measure attractor
 $\mathcal{A} $; that is, for all
 $\tau \in \R$,
 $  \mathcal{A} ( \tau  +T)
 =
 \mathcal{A} ( \tau  ) $.
 \end{thm}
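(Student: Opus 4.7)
The plan is to apply Proposition \ref{exatt} with $X = H$ and $p = 4$. To do so I need to verify four ingredients: (i) $\Phi$ is a non-autonomous dynamical system on $(\mathcal{P}_4(H), d_{\mathcal{P}(H)})$, (ii) $\Phi$ is continuous on bounded subsets of $\mathcal{P}_4(H)$, (iii) $\Phi$ possesses a closed $\mathcal{D}$-pullback absorbing set $K \in \mathcal{D}$, and (iv) $\Phi$ is $\mathcal{D}$-pullback asymptotically compact in $(\mathcal{P}_4(H), d_{\mathcal{P}(H)})$. The cocycle identity and $\Phi(0,\tau) = I$ have already been verified right after the definition \eqref{md2} using the uniqueness of solutions to \eqref{sde3}-\eqref{sde4}. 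The continuity on bounded subsets is exactly Lemma \ref{ma1}, the closed $\mathcal{D}$-pullback absorbing set is furnished by Lemma \ref{ma2}, and the pullback asymptotic compactness is Lemma \ref{ma3}. Invoking Proposition \ref{exatt} therefore delivers a unique $\mathcal{D}$-pullback measure attractor $\mathcal{A} = \{\mathcal{A}(\tau)\}_{\tau \in \R} \in \mathcal{D}$.

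For the periodicity statement, the plan is to propagate the $T$-periodicity hypothesis first to $\Phi$ and then to the absorbing set $K$, so that the last clause of Proposition \ref{exatt} applies. If $f, g, \sigma_k, \theta_k$ and $\phi_g$ are all $T$-periodic in time, then by a time-shift argument based on the fact that $\{W_k(\cdot + T) - W_k(T)\}_k$ has the same law as $\{W_k(\cdot)\}_k$, the process $t \mapsto u(t+T, \tau+T, u_\tau)$ solves the same stochastic equation (in law) as $t \mapsto u(t, \tau, u_\tau)$; by uniqueness of solutions of \eqref{sde3}-\eqref{sde4} in the sense of distribution, $\mathcal{L}_{u(t+T, \tau+T, u_\tau)} = \mathcal{L}_{u(t, \tau, u_\tau)}$, which translates to $P^*_{\tau+T,\,t+T} = P^*_{\tau,\,t}$, i.e. $\Phi(t, \tau+T) = \Phi(t,\tau)$ for all $t \in \R^+$ and $\tau \in \R$.

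Next I verify that $K$ from Lemma \ref{ma2} is $T$-periodic. Since $\theta$ and $\phi_g$ are $T$-periodic, the change of variable $s \mapsto s - T$ yields
\[
\int_{-\infty}^{\tau+T} e^{2\eta(s-\tau-T)}\bigl(\|\phi_g(s)\|^4 + \|\theta(s)\|^4_{L^2(\R^n,\ell^2)}\bigr)\,ds
= \int_{-\infty}^{\tau} e^{2\eta(s-\tau)}\bigl(\|\phi_g(s)\|^4 + \|\theta(s)\|^4_{L^2(\R^n,\ell^2)}\bigr)\,ds,
\]
so $M_4(\tau+T) = M_4(\tau)$ and hence $K(\tau+T) = K(\tau)$ by the formula \eqref{ma2 1}. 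The periodicity conclusion of Proposition \ref{exatt} then gives $\mathcal{A}(\tau+T) = \mathcal{A}(\tau)$ for all $\tau \in \R$.

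The main obstacle has in fact already been absorbed into the preceding lemmas, specifically the non-triviality in Lemma \ref{ma1} (continuity in the weak topology on bounded subsets of $\mathcal{P}_4(H)$, where one cannot appeal to the usual dual relationship between $P^*_{\tau,t}$ and $P_{\tau,t}$ and must instead use Skorokhov's representation together with the uniform fourth-moment bound and Vitali's convergence theorem) and Lemma \ref{ma3} (asymptotic compactness on $\R^n$, handled by uniform tail-end estimates to compensate for the lack of compactness of Sobolev embeddings on unbounded domains). Given those lemmas, the proof of the theorem itself is essentially an application of the abstract framework plus the time-shift verification for periodicity.
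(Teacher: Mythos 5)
Your proposal is correct and follows essentially the same route as the paper: existence and uniqueness via Proposition \ref{exatt} applied with Lemmas \ref{ma1}, \ref{ma2} and \ref{ma3}, and periodicity by checking that $\Phi$ and the absorbing set $K$ are $T$-periodic and invoking the last clause of Proposition \ref{exatt}. Your additional details (the Wiener time-shift argument for the periodicity of $\Phi$ and the change of variable showing $M_4(\tau+T)=M_4(\tau)$) are just explicit versions of steps the paper states without elaboration.
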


\begin{proof}
The existence
and uniqueness
of the measure attractor $\mathcal{A}$
follow from   Proposition \ref{exatt}
based on Lemmas  \ref{ma1},
\ref{ma2}  and
 \ref{ma3}.

 If $f, g$
 and $ \sigma$ are
  periodic functions in time with period $T$,
  then
  $\Phi$ is also periodic with period $T$.
  If  $\theta$ and $\phi_g$ are periodic
  in time with period $T$, then by   \eqref{ma2 1}
  we see the absorbing set
  $K$ is also periodic with period $T$; that is,
  $K(\tau +T) =K(\tau)$
  for all $\tau \in \R$.
  Then the periodicity of $\mathcal{A}$
  follows from Proposition \ref{exatt} again.
   \end{proof}

\section{Singleton attractors,
invariant measures and periodic measures}

In this section, under further conditions, we
prove  the measure attractor of \eqref{sde3}-\eqref{sde4}
is actually a singleton.
Based on this result, we obtain the existence
and uniqueness of invariant measures
and periodic measures  for
the autonomous and
periodic systems, respectively.

In the sequel, we assume
that $\lambda$ satisfies
\eqref{lamc} as well as the following condition:
$$
 \lambda >
   \|\phi_4 \|_{L^\infty(\R, L^\infty(\R^n))}
  +{\frac 12} \|\phi_3 \|_{L^\infty(\R, L^\infty(\R^n)\cap
  L^1(\R^n)  )}
   +{\frac 32} \|\phi_7 \|_{L^\infty(\R, L^\infty(\R^n)
  \cap L^1(\R^n) )}
  $$
  \be\label{lamca}
  +
            \|L_\sigma \|^2_{l^2}
  \left (\|\kappa\|^2_{L^\infty (\R^n)}
  + \| \kappa \|^2
  \right ).
  \ee
By \eqref{lamc} and \eqref{lamca} we find
that there exists a sufficiently small
$\eta\in (0,1)$ such that
both \eqref{etac} and the following inequality
are fulfilled:
 $$
2\lambda  - \eta >
  2\|\phi_4 \|_{L^\infty(\R, L^\infty(\R^n))}
  +\|\phi_3 \|_{L^\infty(\R, L^\infty(\R^n)\cap
  L^1(\R^n)  )}
  +3 \|\phi_7 \|_{L^\infty(\R, L^\infty(\R^n)
  \cap L^1(\R^n) )}
  $$
  \be\label{etaca}
  +
           2\|L_\sigma \|^2_{l^2}
  \left (\|\kappa\|^2_{L^\infty (\R^n)}
  + \| \kappa \|^2
  \right ).
  \ee

 Under  \eqref{lamca}, we prove
 any two solutions of \eqref{sde3}-\eqref{sde4}
 pullback converge to each other
 in  $L^2(\Omega, H)$.

\begin{lem}\label{im1}
Suppose  {\bf  (H1)}-{\bf (H4)}
and \eqref{lamca}
hold.  Then  for every $\tau \in \R$,
$t\in \R^+$ and
$u_{0,1}, u_{0,2}  \in
  L^2(\Omega, {\mathcal{F}}_{\tau-t},
H)$, the solutions of \eqref{sde3} satisfy
     \be\label{im1 1}
 \E \left ( \| u  ( \tau, \tau-t, u_{0,1} )
 -
 u  ( \tau, \tau-t, u_{0,2} ) \|^2
 \right )
 \le e^{- \eta  t }
  \E \left ( \| u_{0,1} -u_{0,2} \|^2
  \right )  ,
\ee
where $\eta>0$ is the same number as in \eqref{etaca}.
   \end{lem}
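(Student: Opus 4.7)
The plan is to apply It\^{o}'s formula to the weighted process $e^{\eta s}\|v(s)\|^2$, where $v(s) = u(s,\tau-t,u_{0,1}) - u(s,\tau-t,u_{0,2})$ for $s \in [\tau-t,\tau]$, mirroring the computation in Lemma \ref{ma1} but now with the exponential weight absorbing the dissipation. Subtracting the two copies of \eqref{sde3} yields a linear-looking equation for $v$ whose drift and diffusion differences involve $f(s,\cdot,u_1,\mathcal{L}_{u_1}) - f(s,\cdot,u_2,\mathcal{L}_{u_2})$, $g(s,\cdot,u_1,\mathcal{L}_{u_1}) - g(s,\cdot,u_2,\mathcal{L}_{u_2})$, and $\sigma(s,u_1,\mathcal{L}_{u_1}) - \sigma(s,u_2,\mathcal{L}_{u_2})$. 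I would introduce the stopping time $\tau_m = \inf\{s \ge \tau-t : \|u_1(s)\|\vee\|u_2(s)\| > m\}$ to kill the martingale term in expectation, and take $m\to\infty$ at the end via Fatou's lemma.

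Next, I would estimate each deterministic contribution exactly as in \eqref{ma1 p2}--\eqref{ma1 p4}. For the $f$-term I decompose the difference as $f(s,x,u_1,\mu_1) - f(s,x,u_2,\mu_1)$ (bounded via the one-sided condition \eqref{f4}) plus $f(s,x,u_2,\mu_1) - f(s,x,u_2,\mu_2)$ (bounded via the Wasserstein Lipschitz condition \eqref{f3}); for the $g$-term I apply \eqref{g2}; and for the It\^{o} correction $\|\sigma_1-\sigma_2\|^2_{L_2}$ I use \eqref{s5}. Throwing away the favorable $-2\|\nabla v\|^2$ dissipation and using the elementary bound $\mathbb{W}_2^2(\mathcal{L}_{u_1(s)},\mathcal{L}_{u_2(s)}) \le \E\|v(s)\|^2$ reduces everything to a Gronwall-type inequality
\begin{equation*}
\frac{d}{ds}\bigl(e^{\eta s}\E\|v(s)\|^2\bigr) \le e^{\eta s} C(s)\,\E\|v(s)\|^2
\end{equation*}
(in integrated form), where $C(s)$ collects all the coefficients $\eta - 2\lambda + 2\|\phi_4\|_\infty + \|\phi_3\|_\infty + \|\phi_3\|_{L^1} + 3\|\phi_7\|_\infty + \|\phi_7\|_{L^1} + 2\|L_\sigma\|_{l^2}^2(\|\kappa\|^2_{L^\infty} + \|\kappa\|^2)$. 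By the dissipation condition \eqref{etaca}, this total coefficient is non-positive, so the weighted expectation is monotone non-increasing; integrating from $\tau-t$ to $\tau$ gives $e^{\eta\tau}\E\|v(\tau)\|^2 \le e^{\eta(\tau-t)}\E\|u_{0,1}-u_{0,2}\|^2$, which is \eqref{im1 1}.

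There is no serious obstacle here beyond careful bookkeeping: the condition \eqref{etaca} has been written precisely so that, after the four decompositions above are summed, the $\E\|v\|^2$ coefficient is $\le 0$. The only real novelty relative to Lemma \ref{ma1} is the insertion of the weight $e^{\eta s}$, which converts the Gronwall growth estimate there into a contraction with explicit rate $\eta$. The stopping-time argument and limit are identical to those already carried out in the proofs of Lemmas \ref{est1} and \ref{est5}, so no new technical difficulty is expected.
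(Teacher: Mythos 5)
Your proposal is correct and follows essentially the same route as the paper: It\^{o}'s formula applied to the weighted quantity $e^{\eta s}\|v(s)\|^2$, the same three difference estimates \eqref{ma1 p2}--\eqref{ma1 p4} borrowed from the proof of Lemma \ref{ma1} (one-sided condition \eqref{f4} plus Wasserstein Lipschitz bound \eqref{f3} for $f$, \eqref{g2} for $g$, \eqref{s5} for $\sigma$, together with $\mathbb{W}_2^2(\mathcal{L}_{u_1(s)},\mathcal{L}_{u_2(s)})\le \E\|v(s)\|^2$), and the sign condition \eqref{etaca} to make the net coefficient nonpositive before relabeling $(\tau,t)\mapsto(\tau-t,\tau)$.
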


\begin{proof}
     Let
  $ u_1 (t,\tau) = u  ( t,\tau,  u_{0,1} ) $,
   $ u_2 (t,\tau) = u  ( t,\tau,  u_{0,2} ) $ and
  $ v  (t,\tau) = u  ( t,\tau,  u_{0,1} )
 - u  ( t,\tau, u_{0,2} ) $.
  By \eqref{ma1 p1}
  and It\^{o}'s formula, we have for all $ t \ge \tau $,
$$
 e^{\eta t}
 \| v ( t )  \|^2
+ 2 \int_\tau^t  e^{\eta s}\| \nabla v  (s)  \|^2  ds
  +  (2 \lambda  -\eta)
   \int_\tau^t   e^{\eta s}  \|  v (s) \|^2  ds
  $$
  $$
  +2
     \int_\tau^t  e^{\eta s}
        \int_{\R^n}
        \left (
         f  \left( s,   u_1 ( s ) ,  \mathcal{L}_{u_1 ( s )}  \right)
           -   f  \left( s,   u_2 ( s ) ,  \mathcal{L}_{u_2 ( s )}  \right)
               \right)
             v (s)
          dx ds
$$
$$
 =  e^{\eta \tau} \| u_{0,1} -u_{0,2} \|^2
 +2
     \int_\tau^t  e^{\eta s}
        \int_{\R^n}
        \left (
        g \left( s,   u_1 ( s ) ,  \mathcal{L}_{u_1 ( s )}  \right)
           -   g \left( s,   u_2 ( s ) ,  \mathcal{L}_{u_2 ( s )}  \right)
               \right)
             v (s)
          dxds
          $$
          $$
 +
  \int_\tau^t e^{\eta s}
        \|  \sigma   ( s,  u_1 ( s ) , \mathcal{L}_{u_1 ( s )}  )
           - \sigma  ( s, u_2 ( s ) ,
           \mathcal{L}_{u _2  ( s )}   )
         \|_{ L_2 ( \ell^2,  H ) }  ^2
     ds
$$
 \be \label{im1 p1}
  + 2 \int_\tau^t  e^{\eta s}
        \left ( v (s),
         \left ( \sigma   ( s ,
         u_1 ( s ) , \mathcal{L}_{u_1 ( s )}   )
           - \sigma  ( s, u_2 ( s ) ,
           \mathcal{L}_{u_2  ( s )}    )
           \right )  dW \right ).
  \ee
  By the proof of \eqref{ma1 p5} we obtain,
  for all $ t \ge \tau $,
$$
 \E \left ( e^{\eta t} \| v ( t )  \|^2
 \right )
 + (2\lambda -\eta)
 \int_0^t
 e^{\eta s} \| v ( s)  \|^2 ds
 \le
  \E \left ( e^{\eta \tau}  \| u_{0,1} -u_{0,2} \|^2
  \right )
  $$
  $$
  +
  \left (
  2\|\phi_4 \|_{L^\infty(\R, L^\infty(\R^n))}
  +\|\phi_3 \|_{L^\infty(\R, L^\infty(\R^n)\cap
  L^1(\R^n)  )}
  \right ) \int_\tau^t  e^{\eta s}
   \E \left (
  \| v (s) \|^2 \right )
 ds
          $$
          $$
          + 3 \|\phi_7 \|_{L^\infty(\R, L^\infty(\R^n)
  \cap L^1(\R^n) )}
   \int_\tau^t  e^{\eta s}
   \E \left (
  \| v (s) \|^2 \right )
          $$
\be\label{im1 p2}
          +
           2\|L_\sigma \|^2_{l^2}
  \left (\|\kappa\|^2_{L^\infty (\R^n)}
  + \| \kappa \|^2
  \right )\int_\tau^t  e^{\eta s}
   \E \left (
  \| v (s) \|^2 \right ) ds.
  \ee
      By \eqref{etaca} and \eqref{im1 p2}
         we obtain,
        for  all $t\ge \tau$,
         \be\label{im1 p3}
 \E \left ( \| v  ( t, \tau )  \|^2
 \right )
 \le e^{\eta (\tau -t)}
  \E \left ( \| u_{0,1} -u_{0,2} \|^2
  \right )  .
\ee
Replacing $t$ and $\tau$ by $\tau$ and
$\tau -t$ in \eqref{im1 p3}, respectively,
we obtain for all $t\ge 0$,
$$
 \E \left ( \| u  ( \tau, \tau-t, u_{0,1} )
 -
 u  ( \tau, \tau-t, u_{0,2} ) \|^2
 \right )
 \le e^{- \eta  t }
  \E \left ( \| u_{0,1} -u_{0,2} \|^2
  \right )  ,
  $$
  which is the same as \eqref{im1 1}.
 \end{proof}

Next,  we   apply
  Lemma \ref{im1} to prove
  the measure attractor of \eqref{sde3}
  is a singleton.

\begin{thm}\label{main_s}
If  {\bf  (H1)}-{\bf (H4)},
\eqref{lamc}, \eqref{phi_g}-\eqref{phi_ga}
and \eqref{lamca}
hold, then:
\begin{enumerate}
\item [(i)]
The  $\mathcal{D}$-pullback
    measure attractor
$\mathcal{A}$ of
$\Phi$  is a singleton;  that is,
$\mathcal{A}= \{   \mu (\tau)
: \tau \in \mathbb{R}   \}$.

 \item[(ii)]
If   $f, g,  \sigma$, $\theta$ and $\phi_g$
are  all periodic functions in time with period $T$,
then the stochastic equation
\eqref{sde3} has a unique periodic measure
with periodic $T$ in $H$.

\item[(iii)]
If   $f, g,  \sigma$, $\theta$ and $\phi_g$
are  all  time independent,
then   the stochastic equation
\eqref{sde3} has a unique invariant  measure
 in $H$.
 \end{enumerate}
 \end{thm}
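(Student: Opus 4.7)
The plan is to combine the $\Phi$-invariance of the measure attractor $\mathcal{A}$ from Theorem \ref{main_e} with the pullback contraction estimate of Lemma \ref{im1}. For part (i), fix $\tau \in \R$ and let $\mu_1, \mu_2 \in \mathcal{A}(\tau)$. The invariance $\mathcal{A}(\tau) = \Phi(t,\tau-t)\mathcal{A}(\tau-t)$ produces, for each $t > 0$, measures $\nu_i^{(t)} \in \mathcal{A}(\tau-t)$ with $\Phi(t,\tau-t)\nu_i^{(t)} = \mu_i$. I would realize these as $\mathcal{F}_{\tau-t}$-measurable random variables $u_{0,1}^{(t)}, u_{0,2}^{(t)}$ on a common (possibly enlarged) probability space, independent of the driving noise on $[\tau-t,\tau]$. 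Then $(u(\tau,\tau-t,u_{0,1}^{(t)}), u(\tau,\tau-t,u_{0,2}^{(t)}))$ is a coupling of $(\mu_1,\mu_2)$, so Lemma \ref{im1} yields
\begin{equation*}
\mathbb{W}_2^2(\mu_1,\mu_2) \le \E\|u(\tau,\tau-t,u_{0,1}^{(t)}) - u(\tau,\tau-t,u_{0,2}^{(t)})\|^2 \le e^{-\eta t}\,\E\|u_{0,1}^{(t)} - u_{0,2}^{(t)}\|^2.
\end{equation*}
Estimating $\E\|u_{0,i}^{(t)}\|^2 \le \|\nu_i^{(t)}\|_{\mathcal{P}_2(H)}^2 \le \|\nu_i^{(t)}\|_{\mathcal{P}_4(H)}^2 \le \|\mathcal{A}(\tau-t)\|_{\mathcal{P}_4(H)}^2$ via Cauchy--Schwarz, the right-hand side is bounded by $4 e^{-\eta t}\|\mathcal{A}(\tau-t)\|_{\mathcal{P}_4(H)}^2$. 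Since $\mathcal{A} \in \mathcal{D}$ gives $e^{2\eta(\tau-t)}\|\mathcal{A}(\tau-t)\|_{\mathcal{P}_4(H)}^4 \to 0$ as $t \to \infty$, i.e., $\|\mathcal{A}(\tau-t)\|_{\mathcal{P}_4(H)}^2 = o(e^{\eta t})$, this upper bound is $o(1)$. Hence $\mathbb{W}_2(\mu_1,\mu_2) = 0$, so $\mu_1 = \mu_2$.

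For part (ii), the periodicity assumption and Theorem \ref{main_e} give $\mathcal{A}(\tau+T) = \mathcal{A}(\tau)$, whence $\mu(\tau+T) = \mu(\tau)$; the invariance $\Phi(t,\tau)\mathcal{A}(\tau) = \mathcal{A}(\tau+t)$ then becomes $\Phi(t,\tau)\mu(\tau) = \mu(\tau+t)$, the defining property of a $T$-periodic measure. For uniqueness, any other $T$-periodic family $\{\tilde\mu(\tau)\} \subseteq \mathcal{P}_4(H)$ is bounded in $\mathcal{P}_4(H)$ by periodicity and $\Phi$-invariant, so it belongs to $\mathcal{D}$; the attracting property of $\mathcal{A}$ then forces $d_{\mathcal{P}(H)}(\tilde\mu(\tau), \mathcal{A}(\tau)) = 0$, i.e., $\tilde\mu(\tau) = \mu(\tau)$. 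For part (iii), in the autonomous case $\Phi(t,\tau)$ depends only on $t$, so the uniqueness of the $\mathcal{D}$-pullback attractor forces $\mathcal{A}(\tau)$ to be independent of $\tau$; its singleton value $\mu$ then satisfies $\Phi(t,\tau)\mu = \mu$ for all $t \ge 0$, $\tau \in \R$, yielding the invariant measure, and its uniqueness follows exactly as in (ii) with the constant family $\{\tilde\mu\}$ in place of a periodic one.

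The only delicate technical point is the coupling in part (i): one must realize $\nu_1^{(t)}$ and $\nu_2^{(t)}$ as $\mathcal{F}_{\tau-t}$-measurable random variables on a common probability space, independent of the driving Wiener process on $[\tau-t,\tau]$, so that Lemma \ref{im1} applies simultaneously and yields the $\mathbb{W}_2$ upper bound via the coupling characterization of the Wasserstein distance. This is routine provided the underlying filtered probability space is atomless (or one enlarges it), after which the remainder of the argument reduces to the arithmetic on the fourth-moment bound supplied by $\mathcal{A} \in \mathcal{D}$.
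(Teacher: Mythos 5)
Your proof is correct and follows essentially the same route as the paper: pull the two points of $\mathcal{A}(\tau)$ back via the invariance of the attractor, apply the contraction estimate of Lemma \ref{im1}, and use the decay built into $\mathcal{A}\in\mathcal{D}$ to conclude the singleton property, with (ii) and (iii) then following from the periodicity statement of Theorem \ref{main_e}. The only differences are cosmetic — you phrase the contraction in terms of $\mathbb{W}_2$ via an explicit coupling where the paper estimates $d_{\mathcal{P}(H)}$ directly by a crude bound $\E\|u_{0,1}-u_{0,2}\|^2\le 2\E(\|u_{0,1}\|^2+\|u_{0,2}\|^2)$ that needs no coupling, and you spell out a uniqueness argument in (ii) that the paper leaves implicit.
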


\begin{proof} (i).
Let
$\mathcal{A}= \{   \mathcal{A}   (\tau)
: \tau \in \mathbb{R}   \}$
be the unique  $\mathcal{D}$-pullback
    measure attractor of $\Phi$.
    We will prove   $\mathcal{A}   (\tau) $
    consists of only one point for every
    $\tau \in \R$. Let
    $\mu $ and $\nu$ be arbitrary points in
    $\mathcal{A}   (\tau) $. We need to prove
    $\mu =\nu$.

    Let  $\{t_n\}_{n=1}^\infty$ be a sequence of
    positive numbers such that $t_n \to \infty$.
    By the invariance of $\mathcal{A}$, for every
    $n\in \N$, there exist
    $\mu_{\tau -t_n},
    \nu_{\tau-t_n} \in \mathcal{A} (\tau -t_n)$
    such that
    $$
    \mu =\Phi(t_n, \tau -t_n, \mu_{\tau -t_n})
    \quad
    \text{ and}
    \quad
    \nu =\Phi(t_n, \tau -t_n, \nu_{\tau -t_n}).
    $$
    Let $u_{0,1},  u_{0,2}\in
    L^4(\Omega, \mathcal{F}_{\tau -t_n}, H)$
    such that
    $\mathcal{L}_{u_{0,1}}
    =\mu_{\tau -t_n}$  and
     $\mathcal{L}_{u_{0,2}}
    =\nu_{\tau -t_n}$.
    Then we have
     $$
    \mu =\mathcal{L}_{
    u(\tau,  \tau -t_n, u_{0,1} )}
    \quad \text{ and}
    \quad
    \nu  =\mathcal{L}_{
    u(\tau,  \tau -t_n, u_{0,2} )},
    $$
    where
   $u(\tau,  \tau -t_n, u_{0,1} )$
   and
   $u(\tau,  \tau -t_n, u_{0,2} )$
   are the solutions of \eqref{sde3}
   with initial data $u_{0,1}$ and
   $u_{0,2}$ at initial time $\tau-t_n$,
   respectively.
   By Lemma \ref{im1} we get
   $$
   d_{\mathcal{P} (H)}
   (\mu, \nu)
    =\sup_{\psi\in L_b (H), \|\psi\|_{L_b}
   \le 1}
   \left |
   \int_H \psi d\mathcal{L}_{
    u(\tau,  \tau -t_n, u_{0,1} )}
    - \int_H \psi \mathcal{L}_{
    u(\tau,  \tau -t_n, u_{0,2} )}
   \right |
   $$
    $$
    =\sup_{\psi\in L_b (H), \|\psi\|_{L_b}
   \le 1}
   \left |
   \E \left (
    \psi (
    u(\tau,  \tau -t_n, u_{0,1} )) \right )
    -\E \left (
    \psi (
    u(\tau,  \tau -t_n, u_{0,2} )) \right )
   \right |
   $$
    $$
   \le
   \E \left (
    \|
    u(\tau,  \tau -t_n, u_{0,1} )
   -
    u(\tau,  \tau -t_n, u_{0,2} )   \|
    \right )
    $$
    $$
    \le
     e^{- {\frac 12} \eta  t }
 \left (  \E \left ( \| u_{0,1} -u_{0,2} \|^2
  \right )
  \right )^{\frac 12}
   \le
    e^{- {\frac 12} \eta  t }
 \left (  2\E \left ( \| u_{0,1}\|^2
 +\|u_{0,2} \|^2
  \right )
  \right )^{\frac 12}
  $$
\be\label{im2 p1}
  \le
  2 e^{- {\frac 12} \eta  t } \|\mathcal{A} (\tau -t_n)\|
  _{\mathcal{P}_2(H)}.
 \ee
    Since $\mathcal{A} \in \mathcal{D}$, we
    have
   $$
   \lim_{t\to \infty}
    e^{ 2 \eta (\tau - t )} \|\mathcal{A} (\tau -t )\|^4
  _{\mathcal{P}_4(H)} =0,
  $$
  which implies  the right-side of \eqref{im2 p1}
  converges to zero as $t\to \infty$,
  and hence
  $\mu=\nu$.

  (ii).
  If   $f, g,  \sigma$, $\theta$ and $\phi_g$
are  all periodic functions in time with period $T$,
then by Theorem \ref{main_e}, we know
the measure attractor $\mathcal{A}$ is periodic
with period $T$, which along with (i)
shows that
$\mathcal{A} =\{ \mu (\tau): \tau
\in \R\}$
 and  $\mu(\tau +T)
=\mu (\tau)$ for all $\tau \in \R$.
Therefore,
 $\mu (0)$ is a periodic measure
of \eqref{sde3} with period $T$.

  (iii).
  If   $f, g,  \sigma$, $\theta$ and $\phi_g$
are  all
time independent,
then
by (ii) we know that
$\mu(\tau), \tau \in \R$,
 is $T$-periodic
for any $T>0$, and hence $\mu$ must be
an invariant measure.
  This completes the proof.
  \end{proof}

\section{Upper semicontinuity of pullback measure attractors}

In this section,
we discuss the limiting behavior of
pullback measure attractors
of \eqref{sdep1}
as $\eps \to 0$.
To that end, we assume  that
all the functions
$f^\eps$, $g^\eps$ and $\sigma_k^\eps$
in
\eqref{sdep1}
satisfy  the conditions
{\bf  (H1)}-{\bf (H4)}
uniformly with respect to $\eps\in (0,1)$.
Throughout this section, we assume
that
the functions
$f$, $g$ and $\sigma_k$
in
\eqref{sdel1}
satisfy the conditions
corresponding
to  {\bf  (H1)}-{\bf (H4)}
without  the terms involving   variables
$\mu, \mu_1$ or $\mu_2$.
Furthermore, we assume that
all  $\eps \in (0,1)$,  $t, u
    \in \mathbb{R}$,
    $x\in \R^n$  and $\mu \in \mathcal {P}_2(H)$,
\be\label{limc1}
  \left |
  f^\eps(t, x, u, \mu )-
  f(t, x, u )
  \right |
  \le
  \eps \phi_9 (t,x)
  ( |u| + \sqrt{\mu (\|\cdot \|^2) },
\ee
\be\label{limc2}
  \left |
   g^\eps(t, x, u, \mu )- g
  (t, x, u )
  \right |
  \le
  \eps \phi_9 (t,x)
  ( |u| + \sqrt{\mu (\|\cdot \|^2) },
\ee
and for every $k\in \N$,
\be\label{limc3}
  \left |
  \sigma_k^\eps(t,  u, \mu )-
 \sigma_k(t,  u)
  \right |
  \le
  \eps \phi_{10} (t) \widetilde{L} _{\sigma, k}
  ( |u| + \sqrt{\mu (\|\cdot \|^2) },
\ee
where
$\sum_{k=1}^\infty
\widetilde{L}^2 _{\sigma, k}<\infty$,
$\phi_9
\in L^1_{loc}(\R,
L^\infty (\R^n)
\cap  L^1(\R^n))$
and $\phi_{10} \in
\in L^2_{loc}(\R,
\R)$.

The next lemma is concerned with
the convergence of solutions
of \eqref{sdep1} as $\eps \to 0$.

\begin{lem}\label{up1}
If \eqref{limc1}-\eqref{limc3} hold, then
for every $\tau \in \R$,
$T>0$ and $R>0$,  the solutions of \eqref{sdep1}
and \eqref{sdel1} satisfy:
 $$
\lim\limits_{ \varepsilon  \to 0 }\
    \sup\limits_{ \|u_\tau\|_{L^2
    (\Omega, H)} \le R} \  \
    \sup\limits_{ \tau\le t\le \tau +T}
   \E \left (
    \|u^\eps (t, \tau, u_\tau)
    -u (t, \tau, u_\tau) \|^2
    \right )
 = 0.
$$
  \end{lem}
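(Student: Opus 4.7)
The plan is to estimate $v^{\varepsilon}(t) := u^{\varepsilon}(t,\tau,u_\tau) - u(t,\tau,u_\tau)$ directly by an energy argument on $[\tau,\tau+T]$. I would first subtract \eqref{sdel1} from \eqref{sdep1} to obtain the SPDE satisfied by $v^{\varepsilon}$ and apply It\^o's formula to $\|v^{\varepsilon}(t)\|^2$, introducing a sequence of stopping times $\tau_m = \inf\{t\ge\tau : \|u^{\varepsilon}(t)\| + \|u(t)\|>m\}$ to justify the calculation (as in the proofs of Lemmas~\ref{est1} and \ref{im1}) and then passing $m\to\infty$ by Fatou's lemma. The resulting identity for $\E\|v^{\varepsilon}(t)\|^2$ contains three cross terms coming respectively from $f^\varepsilon$ vs.\ $f$, from $g^\varepsilon$ vs.\ $g$, and (as a quadratic-variation contribution) from $\sigma^\varepsilon$ vs.\ $\sigma$.

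The key idea is a two-step splitting of each cross term, separating the $\varepsilon$-perturbation from the Lipschitz contribution:
\[
 f^\varepsilon(s,x,u^\varepsilon,\mathcal{L}_{u^\varepsilon(s)}) - f(s,x,u) = \bigl[f^\varepsilon(s,x,u^\varepsilon,\mathcal{L}_{u^\varepsilon(s)}) - f(s,x,u^\varepsilon)\bigr] + \bigl[f(s,x,u^\varepsilon) - f(s,x,u)\bigr],
\]
and analogously for $g$ and $\sigma$. The first bracket is bounded pointwise by \eqref{limc1}--\eqref{limc3}, and after integrating over $x$, using $\sqrt{\mathcal{L}_{u^\varepsilon(s)}(\|\cdot\|^2)} = (\E\|u^\varepsilon(s)\|^2)^{1/2}$, and taking expectations, contributes at most $\varepsilon^2 \, C(T,\tau)\bigl(1 + \sup_{[\tau,\tau+T]}\E\|u^\varepsilon(s)\|^2\bigr)$. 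The second bracket for $f$ involves the polynomial-growth nonlinearity, so \eqref{f3} cannot be applied without creating unbounded moments; instead, the one-sided monotonicity inherited from \eqref{f4} for the distribution-independent $f$ in \eqref{sdel1} gives
\[
 -2\int_{\R^n}\bigl[f(s,x,u^\varepsilon) - f(s,x,u)\bigr] v^\varepsilon(s,x)\, dx \le 2\|\phi_4(s)\|_{L^\infty(\R^n)} \|v^\varepsilon(s)\|^2,
\]
exactly the device used in Lemma~\ref{im1}. The analogous Lipschitz estimates for $g$ and $\sigma$ (without their $\mu$-arguments) are immediate from \eqref{g2} and \eqref{s5}.

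To close the argument I need two ingredients. First, a uniform-in-$\varepsilon$ moment bound $\sup_{\varepsilon\in(0,1)}\sup_{t\in[\tau,\tau+T]} \E\|u^\varepsilon(t,\tau,u_\tau)\|^2 \le C(R,T,\tau)$ whenever $\|u_\tau\|_{L^2(\Omega,H)}\le R$, which follows by repeating the energy estimate of Lemma~\ref{est1} on the finite interval $[\tau,\tau+T]$; the constants there depend only on the bounds in (H1)--(H4), which by hypothesis hold uniformly in $\varepsilon$. Second, the trivial coupling bound $\mathbb{W}_2(\mathcal{L}_{u^\varepsilon(s)},\mathcal{L}_{u(s)})^2 \le \E\|v^\varepsilon(s)\|^2$, obtained from the joint law of $(u^\varepsilon(s),u(s))$ on the common probability space, converts every Wasserstein term that appears into a contribution of the same type as $\E\|v^\varepsilon(s)\|^2$. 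Combining all of the above with $v^\varepsilon(\tau)=0$ produces an inequality
\[
 \E\|v^\varepsilon(t)\|^2 \le C_1 \varepsilon^2 + C_2 \int_\tau^t \E\|v^\varepsilon(s)\|^2\, ds, \qquad t\in[\tau,\tau+T],
\]
with $C_1, C_2$ independent of $\varepsilon\in(0,1)$ and of $u_\tau$ in the ball of radius $R$, and one application of Gronwall's lemma yields the claimed convergence, uniformly in $t\in[\tau,\tau+T]$ and $\|u_\tau\|_{L^2(\Omega,H)}\le R$.

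The main obstacle I anticipate is not the splitting itself but ensuring that the polynomial-growth nonlinearity $f$ does not force us to assume higher moments of $u^\varepsilon$ or of the initial data: this is precisely what the one-sided monotonicity \eqref{f4} is for, and it must be used for the Lipschitz-type piece of the drift cross term. A secondary technical point is that the $\mu$-dependence appearing in the $\varepsilon$-parts of \eqref{limc1}--\eqref{limc3} produces factors $(\E\|u^\varepsilon(s)\|^2)^{1/2}$; these are controlled by the uniform second-moment bound mentioned above and are therefore absorbed into the $O(\varepsilon^2)$ error.
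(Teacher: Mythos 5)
Your argument is correct and follows the same overall strategy as the paper (It\^o's formula for $\|v^\varepsilon\|^2$, a telescoping of each drift/diffusion difference into an $\varepsilon$-perturbation piece plus a stability piece, the one-sided monotonicity \eqref{f4} to tame the polynomial nonlinearity, and Gronwall), but your telescoping is ordered differently and this has a small practical consequence. You compare $f^\varepsilon(\cdot,u^\varepsilon,\mathcal{L}_{u^\varepsilon})$ with $f(\cdot,u^\varepsilon)$ first (via \eqref{limc1}) and then use the monotonicity of the \emph{limit} drift $f$ on $f(\cdot,u^\varepsilon)-f(\cdot,u)$; the paper instead peels off $f^\varepsilon(\cdot,u^\varepsilon,\mathcal{L}_{u^\varepsilon})-f^\varepsilon(\cdot,u,\mathcal{L}_{u^\varepsilon})$ and $f^\varepsilon(\cdot,u,\mathcal{L}_{u^\varepsilon})-f^\varepsilon(\cdot,u,\mathcal{L}_{u})$ using \eqref{f4} and \eqref{f3} for $f^\varepsilon$ (and likewise \eqref{g2}, \eqref{s2} for $g^\varepsilon$, $\sigma^\varepsilon$), so that the $\varepsilon$-error term $f^\varepsilon(\cdot,u,\mathcal{L}_u)-f(\cdot,u)$ is evaluated along the \emph{limit} solution $u$. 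As a result the paper's remainder is controlled directly by \eqref{apri_est} for $u$, whereas your ordering requires the additional uniform-in-$\varepsilon$ second-moment bound $\sup_{\varepsilon}\sup_{t\in[\tau,\tau+T]}\E\|u^\varepsilon(t)\|^2\le C(R,T,\tau)$ --- which you correctly identify and which does hold since \textbf{(H1)}--\textbf{(H4)} are assumed uniform in $\varepsilon$, so the constant in \eqref{apri_est} is too. Your version also dispenses with the Wasserstein coupling bound for the drift terms (no $\mathbb{W}_2(\mathcal{L}_{u^\varepsilon},\mathcal{L}_u)$ term arises in your split), while the paper needs it; both routes close the Gronwall inequality and give the stated uniform convergence.
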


\begin{proof}
Let $ u^\varepsilon  ( t )
 =  u^\varepsilon ( t, \tau, u_\tau )   $,
 $ u  ( t )  =  u ( t, \tau, u_\tau )   $
 and
 $ v (t) = u^\varepsilon  ( t ) - u  ( t ) $.
By \eqref{sdep1} and \eqref{sdel1},
we get for all $ t >\tau $,
\begin{equation*}
\begin{split}
& dv(t)   -  \Delta v(t) dt  +  \lambda  v(t) dt
+   \Big(  f^\varepsilon \left( t, x, u^\varepsilon ( t ) ,  \mathcal{L}_{u^\varepsilon ( t )}  \right)
         -  f \left( t, x, u ( t )  \right)
    \Big)  dt
\\
&
=\Big(  g^\varepsilon \left( t, x, u^\varepsilon ( t ) ,  \mathcal{L}_{u^\varepsilon ( t )}  \right)
         - g \left( t, x, u ( t )  \right)
    \Big)  dt
+
\sum_{k=1}^\infty \kappa (x)
\Big(  \sigma^\varepsilon_k  ( t, u^\varepsilon ( t ) ,
\mathcal{L}_{u^\varepsilon ( t )}  )
         - \sigma_k  ( t, u ( t )  )
    \Big)  dW_k(t).
\end{split}
\end{equation*}
By It\^{o}'s formula, we have
for all $ t \ge \tau $,
\begin{equation}\label{up1 p1}
\begin{split}
\| v ( t )  \|^2
=& -  2 \int_\tau^t  \| \nabla v(s)  \|^2  ds
   -  2 \lambda   \int_\tau^t   \|  v(s) \|^2  ds
\\
&  - 2  \int_\tau^t
        \int_{\R^n}
        \left (
         f^\varepsilon \left( s, x, u^\varepsilon ( s,x ) ,
           \mathcal{L}_{u^\varepsilon ( s )}  \right)
               -  f \left( s, x, u ( s,x )  \right)
            v(s,x)
        \right ) dx  ds
        \\
&  + 2  \int_\tau^t
        \int_{\R^n}
        \left (
         g^\varepsilon \left( s, x, u^\varepsilon ( s,x ) ,
           \mathcal{L}_{u^\varepsilon ( s )}  \right)
               -  g \left( s, x, u ( s,x )  \right)
            v(s,x)
        \right ) dx  ds
  \\
&  +  \sum_{k=1}^\infty
\int_\tau^t
        \| \kappa
        ( \sigma^\varepsilon_k  ( s, u^\varepsilon ( s ) , \mathcal{L}_{u^\varepsilon ( s )}  )
           - \sigma_k  ( s, u ( s )  ) )
        \|  ^2
     ds
\\
&  + 2 \int_\tau^t
        \Big(  v(s)
              , \sum_{k=1}^\infty
              \kappa
              \left( \sigma^\varepsilon_k  ( s, u^\varepsilon ( s ) , \mathcal{L}_{u^\varepsilon ( s )}  )
                     - \sigma_k  ( s, u ( s )  )
                \right)   dW_k(s)
        \Big).
\end{split}
\end{equation}
For the third term on the right-hand side of \eqref{up1 p1},
by \eqref{f3}-\eqref{f4}  and \eqref{limc1}we have
$$
- 2  \int_\tau^t
        \int_{\R^n}
        \left (
         f^\varepsilon \left( s, x, u^\varepsilon ( s,x ) ,
           \mathcal{L}_{u^\varepsilon ( s )}  \right)
               -  f \left( s, x, u ( s,x )  \right)
            v(s,x)
        \right ) dx  ds
        $$
        $$
        = - 2  \int_\tau^t
        \int_{\R^n}
        \left (
         f^\varepsilon \left( s, x, u^\varepsilon ( s,x ) ,
           \mathcal{L}_{u^\varepsilon ( s )}  \right)
               -
               f^\varepsilon \left( s, x, u  ( s,x ) ,
           \mathcal{L}_{u ^\eps ( s )}  \right)
            v(s,x)
        \right ) dx  ds
        $$
         $$
          - 2  \int_\tau^t
        \int_{\R^n}
        \left (
         f^\varepsilon \left( s, x, u  ( s,x ) ,
           \mathcal{L}_{u^\eps ( s )}  \right)
               -
               f^\varepsilon \left( s, x, u  ( s,x ) ,
           \mathcal{L}_{u  ( s )}  \right)
            v(s,x)
        \right ) dx  ds
        $$
         $$
        - 2  \int_\tau^t
        \int_{\R^n}
        \left (
         f^\varepsilon \left( s, x, u  ( s,x ) ,
           \mathcal{L}_{u  ( s)}  \right)
               -
               f  \left( s, x, u  ( s,x )
              \right)
            v(s,x)
        \right ) dx  ds
        $$
        $$
        \le 2
        \int_\tau^t
        \int_{\R^n}\phi_4 (s,x) |v(s,x)|^2 dxds
        +2 \int_\tau^t
        \int_{\R^n}
        \phi_3  (s,x) |v(s,x)|  \sqrt{
        \E \left (
        \| v(s)\|^2
        \right )} dxds
        $$
        $$
        +2\eps
       \int_\tau^t
        \int_{\R^n}\phi_9 (s,x)
        ( |u(s,x)| |v(s,x)|
        +|v(s,x)|
        \sqrt{
        \E \left (
        \| u(s)\|^2
        \right )})  dxds
         $$
          $$
        \le 2
         \int_\tau^t
         \|\phi_4(s)\|_{
        L^\infty (\R^n)}
          \|v(s )\|^2  ds
          $$
          $$
        + \int_\tau^t
        \left (
        \|\phi_3  (s)\|_{L^\infty (\R^n)}
         \|v(s)\|^2
         + \|\phi_3  (s)\|_{L^1 (\R^n)}
         \E \left (
        \| v(s)\|^2
        \right )
        \right )  ds
        $$
        $$
        + \eps
       \int_\tau^t
         \left (
          \|\phi_9 (s) \|_{L^\infty
         (\R^n)}
         (2 \|v(s)\|^2
         + \| u(s) \|^2)
         + \|\phi_9 (s) \|_{L^1
         (\R^n)}
         \E \left (
        \| u(s)\|^2
        \right )
           \right )ds
         $$
          $$
        \le
         \int_\tau^t
         \left (
         2\|\phi_4(s)\|_{
        L^\infty (\R^n)}
        +
       \|\phi_3(s)\|_{
        L^\infty (\R^n)}
        +
        2 \eps \|\phi_9 (s)\|_{
        L^\infty (\R^n)}
        \right )
          \|v(s )\|^2  ds
          $$
         $$
         +    \int_\tau^t
          \|\phi_3 (s) \|_{L^1
         (\R^n)}
         \E \left (
        \| v(s)\|^2
        \right )
           ds
           $$
           \be\label{up1 p2}
           +  \eps  \int_\tau^t
          \left (
           \|\phi_9 (s) \|_{L^\infty
         (\R^n)} \|u(s)\|^2
         +\|\phi_9 (s) \|_{L^1
         (\R^n)}
         \E \left (
        \| u(s)\|^2
        \right )
           \right )ds.
           \ee

For the forth term on the right-hand side of \eqref{up1 p1},
by  \eqref{g2} and \eqref{limc2}  we get
         $$   2  \int_\tau^t
        \int_{\R^n}
        \left (
         g^\varepsilon \left( s, x, u^\varepsilon ( s,x ) ,
           \mathcal{L}_{u^\varepsilon ( s)}  \right)
               -  g \left( s, x, u ( s,x )  \right)
            v(s,x)
        \right ) dx  ds
        $$
         $$ =   2  \int_\tau^t
        \int_{\R^n}
        \left (
         g^\varepsilon \left( s, x, u^\varepsilon ( s,x ) ,
           \mathcal{L}_{u^\varepsilon ( s )}  \right)
               -  g^\eps
                \left( s, x, u ( s,x ) ,
                \mathcal{L}_{u  ( s )}
                 \right)
            v(s,x)
        \right ) dx  ds
        $$
         $$ +
           2  \int_\tau^t
        \int_{\R^n}
        \left (
         g^\varepsilon \left( s, x, u  ( s,x ) ,
           \mathcal{L}_{u  ( s )}  \right)
               -  g
                \left( s, x, u ( s,x )
                 \right)
            v(s,x)
        \right ) dx  ds
        $$
          $$ \le
             2  \int_\tau^t
        \int_{\R^n}
        \phi_7 (s,x)
       \left ( |v(s,x)|^2 +|v(s,x)|
        \sqrt{\E\left (
        \|v(s)\|^2
        \right )
        }
        \right ) dx ds
        $$
         $$
        +2\eps
       \int_\tau^t
        \int_{\R^n}\phi_9 (s,x)
        ( |u(s,x)| |v(s,x)|
        +|v(s,x)|
        \sqrt{
        \E \left (
        \| u(s)\|^2
        \right )})  dxds
         $$
         $$
        \le 2
         \int_\tau^t
         \|\phi_7(s)\|_{
        L^\infty (\R^n)}
          \|v(s )\|^2  ds
          $$
          $$
        + \int_\tau^t
        \left (
        \|\phi_7  (s)\|_{L^\infty (\R^n)}
         \|v(s)\|^2
         + \|\phi_7  (s)\|_{L^1 (\R^n)}
         \E \left (
        \| v(s)\|^2
        \right )
        \right )  ds
        $$
        $$
        + \eps
       \int_\tau^t
         \left (
          \|\phi_9 (s) \|_{L^\infty
         (\R^n)}
         (2 \|v(s)\|^2
         + \| u(s) \|^2)
         + \|\phi_9 (s) \|_{L^1
         (\R^n)}
         \E \left (
        \| u(s)\|^2
        \right )
           \right )ds
         $$
          $$
        \le
         \int_\tau^t
         \left (
         3\|\phi_7(s)\|_{
        L^\infty (\R^n)}
         +
        2 \eps \|\phi_9 (s)\|_{
        L^\infty (\R^n)}
        \right )
          \|v(s )\|^2  ds
          $$
         $$
         +    \int_\tau^t
          \|\phi_7 (s) \|_{L^1
         (\R^n)}
         \E \left (
        \| v(s)\|^2
        \right )
           ds
           $$
           \be\label{up1 p3}
           +  \eps  \int_\tau^t
          \left (
           \|\phi_9 (s) \|_{L^\infty
         (\R^n)} \|u(s)\|^2
         +\|\phi_9 (s) \|_{L^1
         (\R^n)}
         \E \left (
        \| u(s)\|^2
        \right )
           \right )ds.
           \ee

For the fifth term on the right-hand side of \eqref{up1 p1},
by
\eqref{s2} and \eqref{limc3}  we obtain
   $$
    \sum_{k=1}^\infty
\int_\tau^t
        \| \kappa
        ( \sigma^\varepsilon_k  ( s, u^\varepsilon ( s ) , \mathcal{L}_{u^\varepsilon ( s )}  )
           - \sigma_k  ( s, u ( s )  ) )
        \|  ^2
     ds
     $$
     $$
     \le
    2 \sum_{k=1}^\infty
\int_\tau^t
        \| \kappa
        ( \sigma^\varepsilon_k  ( s, u^\varepsilon ( s ) , \mathcal{L}_{u^\varepsilon ( s )}  )
           - \sigma^\eps_k  ( s, u ( s )  ,
           \mathcal{L}_{u ( s )} ) )
        \|  ^2
     ds
     $$
      $$
    +2
    \sum_{k=1}^\infty
\int_\tau^t
        \| \kappa
        ( \sigma^\varepsilon_k  ( s, u  ( s ) , \mathcal{L}_{u  ( s )}  )
           - \sigma _k  ( s, u ( s )   ))
        \|  ^2
     ds
     $$
     $$
     \le
     4\|L_\sigma\|^2_{l^2}
 \int_\tau^t
 \left (
 \| \kappa \|^2_{L^\infty (\R^n)}
 \|v(s)\|^2
 +\|\kappa \|^2
 \E \left (
 \|v(s)\|^2
 \right )
 \right )
 ds
 $$
    \be\label{up1 p4}
   +
     4\eps^2\sum_{k=1}^\infty
     {\widetilde{L}}_{\sigma, k}^2
   \int_\tau^t   | \phi_{10} (s)  |^2
 \left (
 \| \kappa \|^2_{L^\infty (\R^n)}
 \|u(s)\|^2
 +\|\kappa \|^2
 \E \left (
 \|u(s)\|^2
 \right )
 \right )
 ds  .
   \ee

     It follows from
     \eqref{up1 p1}-\eqref{up1 p4}
     that for all $t\ge \tau$ and $\eps\in (0,1)$,
    $$
\mathbb{E}
  \left (
        \|  v ( t ) \|^2
  \right )
\le
    \int_\tau^t
    c_1(s)  \mathbb{E}  \left (    \|  v(s) \|^2    \right )  ds
   +   \eps
   \int_\tau^t
    c_2(s)  \mathbb{E}  \left (    \|  u (s)\|^2
    \right ) ds,
$$
where $c_1,  c_2 \in L^1_{loc} (\R, \R^+)$,
which along with \eqref{apri_est} shows that
for all
$t\in  [\tau, \tau +T]$ and $\eps\in (0,1)$,
$$
\mathbb{E}
  \left (
        \|  v ( t ) \|^2
  \right )
\le
    \int_\tau^t
    c_1(s)  \mathbb{E}  \left (    \|  v(s) \|^2    \right )  ds
   +  \eps c_3
   (1+ \|u_\tau\|^2_{L^2(\Omega, H)})
   \int_\tau ^{\tau +T}  c_2 (s) ds,
 $$
where $c_3= c_3 (\tau, T)>0$ is a constant
independent of $\eps$.
Then by Gronwall's inequality, we get
for all   $\eps\in (0,1)$,
$t\in  [\tau, \tau +T]$ and
$u_\tau$ with $\|u_\tau\|_{L^2(\Omega, H)}
\le R$,
 $$
\mathbb{E}
  \left (
        \|  u^\eps (t, \tau,  u_\tau)
        -u  (t, \tau,  u_\tau)
         \|^2
  \right )
\le
    \eps  c_3
   (1+  R^2) e^{\int_\tau^{\tau +T}  c_1(s) ds}
   \int_\tau ^{\tau +T}  c_2 (s) ds,
 $$
 which concludes the proof.
 \end{proof}

We now write
 the
non-autonomous dynamical system associated
with  \eqref{sdep1}
as $\Phi^\eps$
to indicate its dependence
on $\eps$,  and  use $\Phi$ for  the
 dynamical system associated
with  \eqref{sdel1}.
The $\mathcal{D}$-pullback measure
attractors of $\Phi^\eps$ and
$\Phi$ are denoted by
$\mathcal{A}^\eps$  and
$\mathcal{A}$, respectively.

 As an immediate consequence of Lemma \ref{up1}, we have
the following convergence result
for  $\Phi^\eps$.

\begin{cor}\label{up2}
If \eqref{limc1}-\eqref{limc3} hold, then
for every  $t\in \R^+$,
$\tau \in \R$
  and $R>0$,
  $\Phi^\eps$ satisfies:
   $$
  \lim_{\eps \to 0}\
  \sup_{  \mu (\|\cdot\|^2) \le R}\
  d_{
  {\mathbb{W}_2}}
  \left (\Phi^\eps (t,\tau)\mu,
  \ \Phi (t,\tau)\mu
  \right )
  =0,
  $$
  and hence
$$
  \lim_{\eps \to 0}\
  \sup_{  \mu (\|\cdot\|^2) \le R}\
  d_{
  {\mathcal{P}(H) }}
  \left (\Phi^\eps (t,\tau)\mu,
  \ \Phi (t,\tau)\mu
  \right )
  =0.
  $$
  \end{cor}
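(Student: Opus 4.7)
The plan is to reduce both convergences to Lemma \ref{up1} via the elementary coupling inequality $\mathbb{W}_2(\mathcal{L}_X,\mathcal{L}_Y)^2\le \E\|X-Y\|^2$, together with the standard domination of the weak-topology metric $d_{\mathcal{P}(H)}$ by $\mathbb{W}_2$.

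First, given $\mu\in\mathcal{P}_2(H)$ with $\mu(\|\cdot\|^2)\le R$, I would choose a representative $u_\tau\in L^2(\Omega,\mathcal{F}_\tau,H)$ with $\mathcal{L}_{u_\tau}=\mu$; the underlying filtered probability space is tacitly assumed rich enough to carry any such prescribed $\mathcal{F}_\tau$-measurable law, as already used throughout the paper. Then
\[
\|u_\tau\|_{L^2(\Omega,H)}^2=\E\|u_\tau\|^2=\mu(\|\cdot\|^2)\le R,
\]
so $\|u_\tau\|_{L^2(\Omega,H)}\le\sqrt{R}$. By the definition \eqref{md2},
\[
\Phi^\eps(t,\tau)\mu=\mathcal{L}_{u^\eps(\tau+t,\tau,u_\tau)},\qquad \Phi(t,\tau)\mu=\mathcal{L}_{u(\tau+t,\tau,u_\tau)},
\]
and the pair $(u^\eps(\tau+t,\tau,u_\tau),\,u(\tau+t,\tau,u_\tau))$, being defined on the same probability space, gives an explicit coupling of these two laws. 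Hence
\[
\mathbb{W}_2\bigl(\Phi^\eps(t,\tau)\mu,\,\Phi(t,\tau)\mu\bigr)\le \bigl(\E\|u^\eps(\tau+t,\tau,u_\tau)-u(\tau+t,\tau,u_\tau)\|^2\bigr)^{1/2}.
\]
Taking the supremum over all $\mu$ with $\mu(\|\cdot\|^2)\le R$ and then applying Lemma \ref{up1} (with horizon $T=t$ and radius $\sqrt{R}$) drives the right-hand side to zero as $\eps\to 0$, yielding the first convergence.

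For the second (weak-topology) convergence, I would invoke the Kantorovich--Rubinstein duality: for any $\xi\in L_b(H)$ with $\|\xi\|_{L_b}\le 1$ the Lipschitz constant of $\xi$ is at most one, so
\[
\Bigl|\int_H\xi\,d\mu_1-\int_H\xi\,d\mu_2\Bigr|\le \mathbb{W}_1(\mu_1,\mu_2)\le\mathbb{W}_2(\mu_1,\mu_2),\qquad \mu_1,\mu_2\in\mathcal{P}_2(H).
\]
Taking the supremum over such $\xi$ gives $d_{\mathcal{P}(H)}(\mu_1,\mu_2)\le\mathbb{W}_2(\mu_1,\mu_2)$, so the second statement follows at once from the first. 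The whole argument is essentially a packaging of Lemma \ref{up1}; no serious obstacle is anticipated, and the only mild point worth emphasizing is that we exploit the canonical coupling obtained by running $u^\eps$ and $u$ with the \emph{same} initial datum on the \emph{same} probability space, which is precisely what makes the coupling inequality applicable.
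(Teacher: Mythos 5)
Your proposal is correct and is exactly the argument the paper intends: the authors state Corollary \ref{up2} as an ``immediate consequence'' of Lemma \ref{up1} without writing out details, and your filling-in --- the canonical coupling of $u^\eps(\tau+t,\tau,u_\tau)$ and $u(\tau+t,\tau,u_\tau)$ from a common initial datum to bound $\mathbb{W}_2$, followed by $d_{\mathcal{P}(H)}\le\mathbb{W}_1\le\mathbb{W}_2$ --- is the standard route. The only cosmetic remark is that the radius in Lemma \ref{up1} becomes $\sqrt{R}$ rather than $R$, which you already noted and which is harmless.
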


Next, we prove   the upper semi-continuity of $\mathcal{D}$-pullback measure attractors
of \eqref{sdep1} as $ \varepsilon \rightarrow 0 $.

\begin{thm}\label{main_up}
If \eqref{limc1}-\eqref{limc3} hold, then
  for every $ \tau \in \mathbb{R} $,
$$
\lim\limits_{ \varepsilon \rightarrow 0 }
   d_{ \mathcal{P}(H) ) }
      \Big(   \mathcal{A}^\varepsilon ( \tau ),   \mathcal{A}_0 ( \tau )   \Big)
=0,
$$
where $ d_{ \mathcal{P}(H) ) }$
is the Hausdorff semi-distance of sets  in
terms of $  d_{ \mathcal{P}(H) ) }$.
\end{thm}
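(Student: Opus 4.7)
The proof will follow the now standard scheme for upper semi-continuity of pullback attractors, adapted to the measure space setting. Arguing by contradiction, suppose the claim fails for some $\tau\in\R$. Then there exist $\delta_0>0$, $\varepsilon_n\to 0$ and $\mu_n\in\mathcal{A}^{\varepsilon_n}(\tau)$ such that $d_{\mathcal{P}(H)}(\mu_n,\mathcal{A}(\tau))\ge\delta_0$ for all $n$. The goal is to produce a subsequence of $\mu_n$ converging (in $d_{\mathcal{P}(H)}$) to some element of $\mathcal{A}(\tau)$, a contradiction.

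The first step is to observe that, since the structural hypotheses $({\bf H1})$--$({\bf H4})$, \eqref{lamc} and \eqref{phi_ga} are supposed to hold for $f^{\varepsilon},g^{\varepsilon},\sigma^{\varepsilon}_k$ \emph{uniformly} in $\varepsilon\in(0,1)$ (and for the limit system as well), the closed $\mathcal{D}$-pullback absorbing set $K=\{K(\tau)\}_{\tau\in\R}$ built in Lemma \ref{ma2} can be chosen independently of $\varepsilon$. In particular $\mathcal{A}^{\varepsilon}(\tau)\subseteq K(\tau)$ for every $\varepsilon\in(0,1]$ and every $\tau\in\R$, and the uniform tail estimate of Lemma \ref{est4} together with the uniform $H^1$-estimate of Lemma \ref{est3} also hold with constants independent of $\varepsilon$.

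The second step is to show that $\bigcup_{\varepsilon\in(0,1]}\mathcal{A}^{\varepsilon}(\tau)$ is relatively compact in $(\mathcal{P}_4(H),d_{\mathcal{P}(H)})$. By invariance of $\mathcal{A}^{\varepsilon_n}$, for every $t>0$ there exists $\nu_n^{(t)}\in\mathcal{A}^{\varepsilon_n}(\tau-t)\subseteq K(\tau-t)$ with $\mu_n=\Phi^{\varepsilon_n}(t,\tau-t)\nu_n^{(t)}$. Pick initial random variables $v_n^{(t)}\in L^4(\Omega,\mathcal{F}_{\tau-t},H)$ with $\mathcal{L}_{v_n^{(t)}}=\nu_n^{(t)}$; the $\varepsilon$-uniform version of Lemmas \ref{est3} and \ref{est4} applied to $u^{\varepsilon_n}(\tau,\tau-t,v_n^{(t)})$ gives a uniform $H^1$-bound and a uniform smallness of the tails outside large balls, so exactly the compactness argument of Lemma \ref{ma3} (cut-off by $\rho_m$ plus Chebyshev plus tightness) applies uniformly in $n$ and in $\varepsilon_n$. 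Hence $\{\mu_n\}$ is tight, and by the uniform $L^4$-bound from Lemma \ref{est5} (again uniform in $\varepsilon$) any weak limit lies in $\mathcal{P}_4(H)$, so a subsequence, still denoted $\mu_n$, converges in $d_{\mathcal{P}(H)}$ to some $\mu_0\in K(\tau)$.

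The third step identifies $\mu_0$ with an element of $\mathcal{A}(\tau)$. Fix $t>0$. By the uniform tightness just established, extracting a further subsequence we may assume $\nu_n^{(t)}\to\nu^{(t)}$ in $d_{\mathcal{P}(H)}$ for some $\nu^{(t)}\in K(\tau-t)\subseteq\mathcal{P}_4(H)$. Then
\[
d_{\mathcal{P}(H)}\bigl(\mu_n,\Phi(t,\tau-t)\nu^{(t)}\bigr)
\le d_{\mathcal{P}(H)}\bigl(\Phi^{\varepsilon_n}(t,\tau-t)\nu_n^{(t)},\Phi(t,\tau-t)\nu_n^{(t)}\bigr)
+d_{\mathcal{P}(H)}\bigl(\Phi(t,\tau-t)\nu_n^{(t)},\Phi(t,\tau-t)\nu^{(t)}\bigr).
\]
The first term tends to $0$ by Corollary \ref{up2} (applied on the bounded set $K(\tau-t)$), and the second term tends to $0$ by the weak continuity of the limiting system $\Phi$ on bounded subsets of $\mathcal{P}_4(H)$, established in Lemma \ref{ma1}. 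Hence $\mu_0=\Phi(t,\tau-t)\nu^{(t)}$ with $\nu^{(t)}\in K(\tau-t)$ for every $t>0$, so $\mu_0\in\Phi(t,\tau-t)K(\tau-t)$. Since $K\in\mathcal{D}$ and $\mathcal{A}$ is the $\mathcal{D}$-pullback measure attractor of $\Phi$, letting $t\to\infty$ and applying the pullback attraction property of $\mathcal{A}$ yields $d_{\mathcal{P}(H)}(\mu_0,\mathcal{A}(\tau))=0$, i.e.\ $\mu_0\in\mathcal{A}(\tau)$, contradicting $d_{\mathcal{P}(H)}(\mu_n,\mathcal{A}(\tau))\ge\delta_0$.

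The main technical obstacle is the second step: establishing that the tail and regularity estimates (Lemmas \ref{est3}--\ref{est5}) really do hold with constants independent of $\varepsilon$. This requires checking that every constant arising in those proofs depends only on the structural data $\alpha_1,\phi_i,\kappa,\beta,\gamma,L_\sigma,\theta,\phi_g$, which are common to $f^\varepsilon,g^\varepsilon,\sigma^\varepsilon$ by hypothesis, and not on $\varepsilon$ itself. Once this uniform-in-$\varepsilon$ book-keeping is done, the compactness extraction runs verbatim as in Lemma \ref{ma3}, and the rest of the argument is routine.
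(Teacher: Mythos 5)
Your proposal is correct in outline, but it takes a genuinely different and noticeably heavier route than the paper. The paper's proof is a direct $\delta$-argument with no contradiction and no compactness extraction: since the absorbing set $K$ of Lemma \ref{ma2} is common to $\Phi$ and all $\Phi^\eps$, one first chooses $T$ so that $d_{\mathcal{P}(H)}(\Phi(T,\tau-T)K(\tau-T),\mathcal{A}(\tau))<\delta/2$ by the attraction property, then uses Corollary \ref{up2} to make $\sup_{\mu\in K(\tau-T)}d_{\mathcal{P}(H)}(\Phi^\eps(T,\tau-T)\mu,\Phi(T,\tau-T)\mu)<\delta/2$ for small $\eps$, and concludes via the invariance identity $\mathcal{A}^\eps(\tau)=\Phi^\eps(T,\tau-T)\mathcal{A}^\eps(\tau-T)$ with $\mathcal{A}^\eps(\tau-T)\subseteq K(\tau-T)$. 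This needs only the common absorbing set, the attraction of $K$ by $\mathcal{A}$, and the convergence of $\Phi^\eps$ to $\Phi$ uniformly on bounded sets of measures; it does not require the continuity of $\Phi$ (Lemma \ref{ma1}) nor any tightness of $\bigcup_\eps\mathcal{A}^\eps(\tau)$. Your contradiction-plus-compactness scheme is valid and in fact yields the stronger byproduct that $\bigcup_{\eps}\mathcal{A}^\eps(\tau)$ is precompact in $(\mathcal{P}_4(H),d_{\mathcal{P}(H)})$, but it pays for this with the entire burden you defer to the end: re-running Lemmas \ref{est3}--\ref{est5} and the tightness argument of Lemma \ref{ma3} with constants uniform in $\eps$, plus an appeal to Lemma \ref{ma1} in the identification step. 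Those uniform estimates do hold under the stated hypothesis that {\bf (H1)}--{\bf (H4)} are satisfied uniformly in $\eps$, so there is no gap, but for the semi-distance statement alone all of that machinery can be bypassed as the paper does.
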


\begin{proof}
Since $f^\eps$,
$g^\eps$ and $\sigma_k^\eps$
satisfy { (\bf H1)-(\bf H4)} uniformly with respect to
$\eps \in (0,1)$, we find that
the family
$K=\{K(\tau): \tau
\in \R\} $ given by
\eqref{ma2 1} is a
closed $\mathcal{D}$-pullback
absorbing set
of  $\Phi$ and
  $\Phi^\eps$
  for all  $\eps\in (0,1)$.
 In particular, for every $\tau \in \R$,
 $
 \mathcal{A} (\tau)$
 and  $\mathcal{A}^\eps  (\tau)$
 are   subsets of $K(\tau)$
 for all $\eps \in (0,1)$.

Since $\mathcal{A} $ is
the $\mathcal{D}$-pullback
measure  attractor of $\Phi$ in
$ (\mathcal{P}_4 (H ), d_{\mathcal{P}(H)})$,
we see that
for every $\delta>0$, there exists $ T = T ( \delta, \tau,
K) >0 $ such that
\be \label{main_up p1}
d_{ \mathcal{P} ( H ) }
\Big( \Phi ( T, \tau-T)  K( \tau-T  ), \mathcal{A} ( \tau )
\Big)
<  \frac{1}{2}  \delta.
\ee
 By  Corollary \ref{up2} we have
 $$
 \lim\limits_{ \varepsilon  \to 0 }
    \sup\limits_{ \mu \in   K ( \tau - T ) }
         d_{ \mathcal{P} (H ) }
\Big(   \Phi^\varepsilon ( T, \tau - T ) \mu,
 \Phi  ( T, \tau - T ) \mu    \Big)
 = 0,
$$
and hence, there exists
  $ 0 < \varepsilon_1 < 1 $
   such that for all $ 0< \varepsilon < \varepsilon_1$,
\be \label{main_up p2}
\sup\limits_{ \mu \in   K ( \tau - T ) }
    d_{ \mathcal{P}  ( H ) }
       \Big(    \Phi^\varepsilon ( T, \tau - T ) \mu,
       \Phi ( T, \tau - T ) \mu     \Big)
<   \frac{1}{2}  \delta.
 \ee
Since   $ \mathcal{A}
^\varepsilon ( \tau-T ) \subseteq K  ( \tau-T ) $,
by  \eqref{main_up p1} and \eqref{main_up p2}, we get
 \be \label{main_up p3}
d_{ \mathcal{P} ( H ) }
\Big( \Phi ( T, \tau-T)
\mathcal{A}^\eps( \tau-T  ), \mathcal{A} ( \tau )
\Big)
<  \frac{1}{2}  \delta.
\ee
 and for all
 $  0< \varepsilon < \varepsilon_1$,
 \be \label{main_up p4}
\sup\limits_{ \mu \in   {\mathcal{A}}^\eps
 ( \tau - T ) }
    d_{ \mathcal{P}  ( H ) }
       \Big(    \Phi^\varepsilon ( T, \tau - T ) \mu,
       \Phi ( T, \tau - T ) \mu     \Big)
<   \frac{1}{2}  \delta.
  \ee
 By \eqref{main_up p3}-\eqref{main_up p4},
 we have for all $\varepsilon < \varepsilon_1$,
\be \label{main_up p5}
\sup\limits_{ \mu \in   \mathcal{A}^\varepsilon ( \tau - T ) }
    d_{ \mathcal{P} ( H ) }
       \Big(    \Phi^\varepsilon ( T, \tau - T ) \mu, \mathcal{A}_0 ( \tau )   \Big)
<     \delta.
\ee
By \eqref{main_up p5}
and
  the invariance of $ \mathcal{A}^\varepsilon $,
  we get  for all $\varepsilon < \varepsilon_1$,
  $$\sup\limits_{ \mu \in   \mathcal{A}^\varepsilon ( \tau  ) }
    d_{ \mathcal{P}  (H ) }
       \Big( \mu, \mathcal{A}  ( \tau )   \Big)
<     \delta,
$$
which completes the proof.
 \end{proof}

By Theorem \ref{main_s} and Theorem \ref{main_up}
we immediately obtain the convergence
of invariant measures and periodic measures
of \eqref{sdep1} as $\eps \to 0$
when the measure attractors are   singletons.

\begin{cor}\label{ip_con}
If  {\bf  (H1)}-{\bf (H4)},
\eqref{lamc}, \eqref{phi_g}-\eqref{phi_ga},
 \eqref{lamca}
 and \eqref{limc1}-\eqref{limc3}
hold, then:
\begin{enumerate}
  \item[(i)]
If
$f^\eps, g^\eps,  \sigma^\eps$,
$f, g,  \sigma$,   $\theta$ and  $\phi_g$
are  all periodic functions in time with period $T$,
then the  unique $T$-periodic   measure
of \eqref{sdep1} converges to that
of \eqref{sdel1} as $\eps \to 0$.

\item[(ii)]
If
$f^\eps, g^\eps,  \sigma^\eps$,
$f, g,  \sigma$,   $\theta$ and  $\phi_g$
are  all  time independent,
 then the  unique  invariant   measure
of \eqref{sdep1} converges to that
of \eqref{sdel1} as $\eps \to 0$.
  \end{enumerate}
 \end{cor}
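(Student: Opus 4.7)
The plan is to derive the corollary directly from Theorem \ref{main_s} and Theorem \ref{main_up}, with the principal task being to check that the singleton-attractor hypotheses are inherited both by each $\Phi^\eps$ and by the limiting system $\Phi$, so that the conclusion of the upper semi-continuity theorem reduces to convergence of single points.

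First, I would observe that the standing assumption of this section is that $f^\eps, g^\eps, \sigma^\eps_k$ satisfy {\bf (H1)}--{\bf (H4)} uniformly in $\eps\in(0,1)$, together with the structural conditions on $\theta_k, \kappa$, and that $f,g,\sigma_k$ satisfy the corresponding distribution-free versions. Combined with \eqref{lamc}, \eqref{phi_g}--\eqref{phi_ga} and \eqref{lamca}, this lets me invoke Theorem \ref{main_s} for every $\Phi^\eps$: each has a $\mathcal{D}$-pullback measure attractor that is a singleton $\mathcal{A}^\eps(\tau)=\{\mu^\eps(\tau)\}$. A brief verification is needed that the same dissipative estimate \eqref{lamca} for the limit equation \eqref{sdel1} is implied by the corresponding conditions (the $\mu$-free analogues), because \eqref{limc1}--\eqref{limc3} with $\mu=\delta_0$ and $\eps=0$ give the required bounds for $f,g,\sigma_k$. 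Hence Theorem \ref{main_s} also applies to $\Phi$, yielding $\mathcal{A}(\tau)=\{\mu(\tau)\}$. In the periodic case, Theorem \ref{main_e} and the uniqueness from Theorem \ref{main_s} force $\mu^\eps(\tau+T)=\mu^\eps(\tau)$ and $\mu(\tau+T)=\mu(\tau)$, so $\mu^\eps(0)$ and $\mu(0)$ are the unique $T$-periodic probability measures of \eqref{sdep1} and \eqref{sdel1} respectively. In the autonomous case, $\mu^\eps(\tau)$ and $\mu(\tau)$ are independent of $\tau$ and are the unique invariant measures.

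Next, I would apply Theorem \ref{main_up} at some fixed $\tau$ (e.g.\ $\tau=0$) to obtain
$$
\lim_{\eps\to 0} d_{\mathcal{P}(H)}\big(\mathcal{A}^\eps(\tau),\mathcal{A}(\tau)\big)=0.
$$
Since both $\mathcal{A}^\eps(\tau)=\{\mu^\eps(\tau)\}$ and $\mathcal{A}(\tau)=\{\mu(\tau)\}$ are singletons, the Hausdorff semi-distance collapses to the metric distance between the two points:
$$
d_{\mathcal{P}(H)}\big(\mathcal{A}^\eps(\tau),\mathcal{A}(\tau)\big)=d_{\mathcal{P}(H)}\big(\mu^\eps(\tau),\mu(\tau)\big).
$$
Therefore $\mu^\eps(\tau)\to\mu(\tau)$ in $(\mathcal{P}(H),d_{\mathcal{P}(H)})$, i.e.\ weakly in $\mathcal{P}(H)$. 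Specializing to $\tau=0$ gives convergence of the unique $T$-periodic measures in (i) and of the unique invariant measures in (ii).

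The main obstacle is really bookkeeping rather than analysis: I need to confirm that all hypotheses of Theorem \ref{main_s} and Theorem \ref{main_up} are simultaneously in force for every $\Phi^\eps$ and for $\Phi$. The uniformity of {\bf (H1)}--{\bf (H4)} in $\eps$ gives the first, while \eqref{limc1}--\eqref{limc3} with $\mu=\delta_0$ transfer the conditions to the limit system. No additional estimate beyond those already established in the preceding sections is required; the corollary is a direct consequence of the singleton structure of the attractors and their upper semi-continuity.
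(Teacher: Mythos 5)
Your proposal is correct and follows essentially the same route as the paper: Theorem \ref{main_s} gives that each $\mathcal{A}^\eps(\tau)$ and $\mathcal{A}(\tau)$ is a singleton identified with the unique periodic (resp.\ invariant) measure, and Theorem \ref{main_up} then yields convergence because the Hausdorff semi-distance between singletons reduces to the metric distance between the two points. The extra bookkeeping you flag (transferring the hypotheses to the limit system via \eqref{limc1}--\eqref{limc3}) is consistent with the paper's standing assumptions in that section, so nothing further is needed.
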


\bibliography{References}

\end{document}